\newtheorem{thm}{Theorem}[section]
\newtheorem{cor}[thm]{Corollary}
\newtheorem{defi}{Definition}[section]
\newtheorem{assume}{Assumption}[section]
\newtheorem{lemma}[thm]{Lemma}
\newtheorem{ex}{Example}[section]
\newtheorem{remark}{Remark}[section]
\numberwithin{equation}{section}
\DeclareMathOperator{\Cov}{Cov}
\DeclareMathOperator{\Var}{Var}
\newcommand{\pQ}{\mathbb{Q}^{\mathrm{prod}}}
\newcommand{\R}{\mathbb{R}}
\newcommand{\I}{\mathbf{I}}
\DeclareMathOperator{\an}{\alpha_{\mathnormal n}}
\DeclareMathOperator{\be}{\mathbf e}
\DeclareMathOperator{\bmm}{\mathbf m}
\DeclareMathOperator{\bz}{\mathbf z}
\DeclareMathOperator{\bs}{\bolds}
\DeclareMathOperator{\sech}{sech}
\DeclareMathOperator{\EE}{\mathbb{E}}
\DeclareMathOperator{\PP}{\mathbb P}
\DeclareMathOperator{\QQ}{\mathbb Q}
\DeclareMathOperator*{\xp}{\xrightarrow{\mathnormal P}}
\DeclareMathOperator*{\xd}{\xrightarrow{\mathnormal d}}
\newcommand{\bgamma}{\bm{\gamma}}
\DeclareMathOperator{\mcn}{N}
\newcommand{\A}{\mathbf A}
\newcommand{\G}{\mathbf G}
\newcommand{\bb}{\mathbf b}
\newcommand{\bc}{\mathbf c}
\newcommand{\bq}{\mathbf q}
\newcommand{\bu}{\mathbf u}
\newcommand{\bv}{\mathbf v}
\newcommand{\bw}{\mathbf w}
\newcommand{\by}{\mathbf y}
\newcommand{\C}{\mathbf C}
\newcommand{\M}{\mathbf M}
\newcommand{\bolds}{\mathnormal{\boldsymbol \sigma}}
\newcommand{\boldc}{\mathnormal{\mathbf{c}}}
\newcommand{\sumin}{\sum_{\mathnormal i=1}^{\mathnormal n}}
\newcommand{\maxin}{\max_{\mathnormal i=1}^{\mathnormal n}}
\newcommand{\sumjn}{\sum_{\mathnormal j=1}^{\mathnormal n}}
\newcommand{\sumij}{\sum_{\mathnormal{ i,j}=1}^{\mathnormal n}}
\newcommand{\red}[1]{#1}
\begin{document}
	\begin{frontmatter}
		\title{Fluctuations in random field Ising models}
		\runtitle{Fluctuations in random field Ising models}
        \runauthor{Lee et al.}
		\thankstext{m3}{The research of Sumit Mukherjee was supported in part by NSF Grant DMS-2113414.}
		
		 \begin{aug}
			\author{\fnms{Seunghyun} \snm{Lee}\ead[label=e1]{ sl4963@columbia.edu}},
			\author{\fnms{Nabarun} \snm{Deb}\ead[label=e2]{nabarun.deb@chicagobooth.edu}},
		 	\and
		 	\author{\fnms{Sumit} \snm{Mukherjee}\thanksref{m3}\ead[label=e3]{sm3949@columbia.edu}}
		 \end{aug}
        \address{Department of Statistics, Columbia University\printead[presep={,\ }]{e1,e3}}
        \address{Chicago Booth School of Business\printead[presep={,\ }]{e2}}
  
		\begin{abstract}
  This paper establishes a  CLT for linear statistics of the form $\langle \bq,\bolds\rangle$ with quantitative Berry-Esseen bounds, where $\bolds$ is an observation from an exponential family with a quadratic form as its sufficient statistic, in the \enquote{high-temperature} regime. We apply our general result to random field Ising models with both discrete and continuous spins. To demonstrate the generality of our techniques, we apply our results to derive both quenched and annealed CLTs in various examples, which include Ising models on some graph ensembles of common interest (Erd\H{o}s-R\'{e}nyi, regular, random graphons), and the Hopfield spin glass model. Our proofs rely on a combination of Stein's method of exchangeable pairs and Chevet-type concentration  inequalities.
  \end{abstract}
		
		\begin{keyword}[class=AMS]
			\kwd[Primary ]{82B20}
			\kwd{82B44}
		\end{keyword}
		\begin{keyword}
                \kwd{Berry-Esseen bounds}
			\kwd{Hopfield model}
                \kwd{Mean-Field approximation}
                \kwd{Random field Ising Model}
                \kwd{Stein's method}
		\end{keyword}
		
	\end{frontmatter}
	
	\maketitle
	
	\section{Introduction}
	
	Let $\bolds:=(\sigma_1,\ldots , \sigma_n)^{\red{\top}} \in [-1,1]^n$ be a random vector drawn from the following quadratic interaction model with external field $\boldc:=(c_1,\ldots ,c_n)^{\red{\top}} \in \R^n$:
	\begin{equation}\label{eq:model}
		\frac{d\PP}{d\prod_{i=1}^n \mu_i}(\bolds):=\frac{1}{Z_n}\exp\left(\frac{1}{2}\bolds^{\top} \A_n\bolds+\boldc^{\top}\bolds\right),  
	\end{equation}
    where
    \begin{align}\label{eq:normconst}
     Z_n:=\int \exp\left(\frac{1}{2}\bolds^{\top} \A_n\bolds+\boldc^{\top}\bolds\right)\prod_{i=1}^n\,d\mu_i(\sigma_i).
     \end{align}
	Here, each $\mu_i$ is a \red{non-degenerate} base-measure supported on $[-1,1]$ \red{where both $\{-1,1\}$ belong in the support of each $\mu_i$}, and $\A_n$ is a \red{real} symmetric matrix with zeros on the diagonal.
    We will refer to $\bc$ as the field vector, and the factor $Z_n$ as the normalizing constant/partition function of model \eqref{eq:model}.
    Note that the inverse temperature parameter for typical Gibbs measures is absorbed into the coupling matrix $\A_n$.
    We omit the dependence of $(\A_n, \bc, \{\mu_i\}_{1\le i\le n})$ on the measure $\PP$ and normalizing constant $Z_n$ for simplicity of notation. 
    
    The goal of this paper is to study limit theorems for linear combinations of spins $\bolds$, i.e., 
	\begin{align}\label{eq:targetstat}
		T_n:=\bq^{\top}\bolds=\sum_{i=1}^n q_i\sigma_i,
	\end{align}
	for $\bq:=(q_1,\ldots ,q_n)^\top \in\mathbb{R}^n$, normalized to have $\|\bq\|=1$, when the matrix $\A_n$ is in the so called \emph{high-temperature Mean-Field} regime of statistical physics (see Assumptions \ref{assn:ht} and \ref{assn:mf} below).
 Our main motivation for studying limit theorems under \eqref{eq:model} comes from the following example: 

    	\begin{ex}[Random field Ising model]\label{ex:rfim}
		Consider an observation from model \eqref{eq:model} where the base measures $\mu_i=\mu$ are the same, and the field vector $\bc=(c_1,\ldots ,c_n)^\top$ is a fixed realization of $n$ i.i.d. random variables from some probability distribution, say $F$. The most studied variant of this model is the random field Ising model (RFIM, {sometimes referred to as the disordered Ising model}), where $\mu_i$s are supported on $\{-1,1\}$ and $\A_n$ represents the (scaled) adjacency matrix of a  graph. Two commonly studied graph ensembles are the complete graph (see \cite{salinas1985mean,de1991fluctuations,he2023hidden,lowe2023propagation}), and a neighborhood graph which is a subgraph of the $d$-dimensional lattice (see \cite{chatterjee2019central, chatterjee2015absence,ding2024long, chatterjee2023features, ding2021exponential, ding2023new, ding2024long2}). 
        In contrast, we look at more general spin-glass models, where the entries of $\A_n$ can have both positive and negative entries, and also allow for general compactly supported probability measures $\mu$ (discrete or continuous). Do note that our results apply only in  the so called high-temperature regime (see~\cref{assn:ht}).
    \end{ex}

\subsection{Main Contributions}\label{sec:contrib}
	
	Our main contributions are two-fold. 
 \begin{itemize}
 
 \item[(a)] In \cref{sec:clt}, we provide our main results regarding concentration and fluctuations of the linear statistic $T_n = \bq^\top \boldsymbol{\sigma}$, where $\bolds$ is a random sample from the distribution $\PP$ (see \eqref{eq:model}). This includes the following: (i) Law of large numbers (LLN) and concentration bounds (in \cref{thm:LLN}), (ii) a CLT with finite sample Berry-Esseen bounds with a somewhat implicit centering %
 (in \cref{thm:clt}), 
 and (iii) another CLT with Berry-Esseen bounds with a more explicit centering, under mildly stronger assumptions
 (see \cref{thm:CLT for u}). The LLN is stated under minimal assumptions, where $\bq$ is an arbitrary vector. For the Berry-Esseen bounds to converge to $0$, we additionally require $\bq$ to be an approximate delocalized eigenvector of the coupling matrix $\A_n$. In fact, the corresponding approximate eigenvalue shows up in the limiting normal distribution. We note that as our results are non-asymptotic with explicit error bounds, they can be readily applied to provide quenched and annealed limits for $T_n$ in a variety of quadratic interaction models, as illustrated in \cref{cor:i.i.d. clt}.

   \item[(b)] Next, in \cref{sec:isingrandom}, we apply our results in the context of the random field Ising model, introduced in \cref{ex:rfim}. To the best of our knowledge, only the average magnetization of the Curie-Weiss (complete graph) RFIM has been understood in the literature \citep{de1991fluctuations}. In \cref{cor:i.i.d. clt}, we state our main result for RFIMs with general coupling matrices $\A_n$, which include adjacency matrices of several graph ensembles  (Erd\H{o}s-Rényi graphs,  regular graphs, random graphons), as well as  the celebrated spin-glass Hopfield model \citep{hopfield1982neural}. Moving beyond complete graphs brings in new technical challenges which we overcome with new proof techniques that involve Stein's method of exchangeable pairs \cite{chatterjee2011nonnormal,Shao2019}, new Chevet-type inequalities (see \cref{lem:L_n norm}),  and new contraction bounds on certain ``local" averages (see \eqref{eq:locfield} and \cref{lem:m-n contraction}).
\end{itemize}

\subsection{Notation}\label{sec:notation}
	
	For two measures $\pi_1,\pi_2$ on the same probability space, define the Kullback-Leibler divergence between $\pi_1$ and $\pi_2$ as
 $$ \mathrm{KL}(\pi_1|\pi_2) := \begin{cases} \EE_{\pi_1}\log{\frac{d\pi_1}{d\pi_2}} & \mbox{if } \pi_1 \ll \pi_2, \\  \infty & \mbox{otherwise}.\end{cases} $$
 \noindent Given an arbitrary symmetric $n\times n$ real valued matrix $\mathbf{M}_n$, let $\lVert \mathbf{M}_n\rVert$, $\lVert \mathbf{M}_n\rVert_{r}$, and $\lVert \mathbf{M}_n\rVert_F$ denote the $(2,2)$-operator norm, the $(r,r)$-operator norm for $r\in (2,\infty]$, and the Frobenius norm of $\mathbf{M}_n$ respectively. { We also let $\mathrm{Off}(\mathbf{M}_n)$ denote the off-diagonal operator that sets the diagonal entries in $\mathbf{M}_n$ to 0.}
    For a vector $\mathbf{a}=(a_1,\ldots ,a_n)^\top\in \R^n$, let $\lVert \mathbf{a}\rVert$ and $\lVert \mathbf{a}\rVert_{r}$ denote the  $\ell_2$ and  $\ell_r$-vector norms, for $r\in (2,\infty]$. Let $\mbox{Diag}(\mathbf{a})$ denote the $n\times n$ diagonal matrix with diagonal entries $a_1,\ldots ,a_n$. We denote the $n$ dimensional all-zero vector, all-one vector, and $n\times n$ identity matrix by $\mathbf{0}$, $\mathbf{1}$, and $\I_n$  respectively. For $x\in\R$, $\lceil x\rceil$ will denote the smallest integer larger than or equal to $x$. With $\A_n$ as in \eqref{eq:model}, define 
	\begin{align}\label{eq:rowcontrol}
		\alpha_n:=\max_{1\leq i\leq n} \sum_{j=1}^{\red{n}} \A_n^2(i,j).
	\end{align}
    For nonnegative sequences $\{a_n\}_{n\ge 1}$ and $\{b_n\}_{n\ge 1}$, we write $a_n \lesssim b_n$ or $a_n = O(b_n)$, if there exists a constant $C>0$ free of $n$ such that $a_n\le C b_n$. %
    We use $a_n\ll b_n$ or $a_n = o(b_n)$ if $a_n/b_n\to 0$ as $n\to\infty$. We also use the standard $o_P, O_p$ notations for a sequence of random variables. The notation $\overset{d}{\longrightarrow}$ denotes weak convergence/convergence in distribution, and $\overset{P}{\longrightarrow}$ denotes convergence in probability. %
    For  a random vector ${\bs}$ from the model~\eqref{eq:model}, we define the random vector of \emph{local fields} as $\mathbf{m}:=(m_1,\ldots ,m_n)^\top$ where 
    \begin{equation}\label{eq:locfield}
        m_i:=\sum_{j=1}^n \A_n(i,j)\sigma_j.
    \end{equation}
   Given random variables $X$ and $Y$ supported on $\R$, the Kolmogorov-Smirnov (KS) distance between their distributions is defined as:
    \begin{align}\label{eq:ksdist}
    d_{KS}(X,Y)=\sup_{t\in\R} \big| P(X\le t) - P(Y\le t)\big|.
    \end{align}
     
   \noindent Unless otherwise mentioned, all probabilities are computed under $\PP$. Finally, we introduce the notion of exponential tilting for probability measures below. 
    
\begin{defi}[Exponential tilting]\label{def:expfam}
	Consider the \red{non-degenerate} base measures $\mu_i$s in \eqref{eq:model}, for $1\le i\le n$.  For $\theta\in\R$, define the tilted measures $\mu_{i,\theta}$ as follows: 
        $$\frac{d\mu_{i,\theta}}{d\mu_i}(z)\propto \exp(\theta z).$$
        Let $\psi_i(\cdot)$ denote the log moment generating function of $\mu_i$, defined by
		$\psi_i(\theta):=\log \int e^{\theta z}d\mu_i(z)$
		for $\theta \in \R$. 
		Then, standard exponential family calculations gives
		$$\psi_i'(\theta)=\EE_{Z \sim \mu_{i,\theta}}[Z],\quad \psi_i''(\theta)={\rm Var}_{Z \sim \mu_{i,\theta}}[Z].$$
	In particular, $\psi_i''(\cdot)$ is strictly positive, so $\psi_i':\mathbb{R} \mapsto (-1,1)$ is strictly increasing. Let $\phi_i:(-1,1)\mapsto \mathbb{R}$ be the inverse of $\psi_i'(\cdot)$, i.e.~$\phi_i(\psi_i'(\theta))=\theta$ for all $\theta\in \mathbb{R}$. Finally, define the KL divergence between the tilt $\mu_{i,\phi_i(z)}$ and the base-measure $\mu_i$ as:
		$$I_i(z):=KL(\mu_{i,\phi_i(z)}|\mu_i)=z\phi_i(z) -\psi_i(\phi_i(z)).$$
	\end{defi}
       \section{Limit theory for quadratic interaction models}\label{sec:mainres}

    This Section presents general theory
    regarding the concentration and Gaussian fluctuations of the linear statistic $T_n=\bq^{\top}\bs$ (see \eqref{eq:targetstat}). Throughout this Section, we will use the general notation $\A_n,\bc$ from \eqref{eq:model} and $\mu_i,\psi_i$ from \cref{def:expfam}.

\subsection{Assumptions and Mean-Field approximation}\label{sec:assumptions}
We will make some assumptions on the interaction matrix $\A_n$. As we shall show in \cref{sec:isingrandom} below, these assumptions hold for a wide-variety of relevant examples, covering both deterministic and random matrices $\A_n$. 
	
	\begin{assume}[high-temperature]\label{assn:ht}
		We present a sequence of three high-temperature regime assumptions which are increasingly stronger as will be demonstrated in the subsequent lemma. Fix $\rho\in (0,1)$.
		\begin{itemize}
			\item Weak high-temperature regime (WHT) assumption.
			\begin{align}\label{eq:wht}
				\lVert \A_n\rVert\le \rho.
			\end{align}
			\item Moderate high-temperature regime (MHT) assumption.
			\begin{align}\label{eq:mht}
				\lVert \A_n\rVert_{4}\le \rho.
			\end{align}
			\item Strong high-temperature regime (SHT) assumption.
			\begin{align}\label{eq:sht}
				\lVert\A_n\rVert_{\infty}\le \rho.
			\end{align}
		\end{itemize}
  \noindent Depending on the result, we will invoke one of the three assumptions above.
	\end{assume}

Note that the SHT assumption in \eqref{eq:sht} is commonly referred to as the high-temperature Dobrushin uniqueness condition in the Gibbs distribution literature; see e.g. \cite[Equation 1.1]{Reza2018} and \cite[Equation 2.2]{Christof2003}. Different forms of such high-temperature assumptions on $\A_n$ have gained significant popularity in the recent years (see e.g., \cite{mukherjee2022variational,barbier2020mutual,qiu2024sub,mukherjee2024naive,celentano2023mean}). The following matrix lemma provides an ordering between the three high-temperature regimes stated above.  

\begin{lemma}\label{lem:normorder}
	Let $\mathbf{M}_n$ be a $n \times n$ symmetric matrix. Then, $\lVert \mathbf{M}_n\rVert \le \lVert \mathbf{M}_n\rVert_{r} \le \lVert \mathbf{M}_n\lVert_{\infty}$ for $r\in (2,\infty)$.
\end{lemma}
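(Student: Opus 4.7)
The plan is to deduce both inequalities from the Riesz--Thorin interpolation theorem, using symmetry of $\mathbf{M}_n$ to collapse the resulting geometric-mean bounds into single norms. The key consequence of symmetry I would invoke is the duality identity
\[\lVert \mathbf{M}_n\rVert_r = \lVert \mathbf{M}_n^\top\rVert_{r'} = \lVert \mathbf{M}_n\rVert_{r'}, \qquad \tfrac{1}{r}+\tfrac{1}{r'}=1,\]
which combines the standard fact that the $\ell^r\to\ell^r$ operator norm of a matrix equals the $\ell^{r'}\to\ell^{r'}$ operator norm of its transpose with the hypothesis $\mathbf{M}_n^\top = \mathbf{M}_n$.

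For the first inequality $\lVert \mathbf{M}_n\rVert \leq \lVert \mathbf{M}_n\rVert_r$ with $r\in(2,\infty)$, I would set $r' = r/(r-1) \in (1,2)$. Since $\tfrac{1}{2} = \tfrac{1}{2r'} + \tfrac{1}{2r}$, the pair $(\ell^2,\ell^2)$ is the $\theta = 1/2$ interpolation endpoint between $(\ell^{r'},\ell^{r'})$ and $(\ell^r,\ell^r)$. Riesz--Thorin then yields
\[\lVert \mathbf{M}_n\rVert \;=\; \lVert \mathbf{M}_n\rVert_2 \;\leq\; \lVert \mathbf{M}_n\rVert_{r'}^{1/2}\,\lVert \mathbf{M}_n\rVert_r^{1/2} \;=\; \lVert \mathbf{M}_n\rVert_r,\]
where the last equality is the duality identity above.

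For the second inequality $\lVert \mathbf{M}_n\rVert_r \leq \lVert \mathbf{M}_n\rVert_\infty$, I would specialize the same identity to $r=\infty$, giving $\lVert \mathbf{M}_n\rVert_1 = \lVert \mathbf{M}_n\rVert_\infty$ (symmetry makes the maximum absolute row sum and column sum agree). Interpolating between $(\ell^1,\ell^1)$ and $(\ell^\infty,\ell^\infty)$ at parameter $\theta = 1-1/r$ via Riesz--Thorin then gives
\[\lVert \mathbf{M}_n\rVert_r \;\leq\; \lVert \mathbf{M}_n\rVert_1^{1-1/r}\,\lVert \mathbf{M}_n\rVert_\infty^{1/r} \;=\; \lVert \mathbf{M}_n\rVert_\infty.\]

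I do not foresee any real obstacle: the whole argument is a textbook application of interpolation, and the role of symmetry is precisely to make the two endpoint norms in each Riesz--Thorin step coincide, which is what allows the geometric mean to collapse to the single norm in the statement. Should a fully self-contained proof be preferred, one could avoid invoking Riesz--Thorin by deriving the second inequality directly from a H\"older-plus-row-sum computation (using symmetry to equate row and column sums), and then deducing the first inequality from the second via the duality identity.
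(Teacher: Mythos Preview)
Your proof is correct but takes a genuinely different route from the paper's. The paper argues each inequality directly and elementarily: for $\lVert \mathbf{M}_n\rVert \le \lVert \mathbf{M}_n\rVert_r$ it plugs an eigenvector $\mathbf{v}$ of $\mathbf{M}_n$ (which exists in $\mathbb{R}^n$ by symmetry) into the ratio defining $\lVert \mathbf{M}_n\rVert_r$, and for $\lVert \mathbf{M}_n\rVert_r \le \lVert \mathbf{M}_n\rVert_\infty$ it normalizes $\lVert \mathbf{M}_n\rVert_\infty = 1$ and applies Jensen's inequality to $\bigl(\sum_i |\mathbf{M}_n(i,j)|\,|x_i|\bigr)^r$, using symmetry to ensure column sums are also bounded by $1$. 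Your approach instead packages both steps as instances of Riesz--Thorin interpolation, with symmetry entering only through the duality identity $\lVert \mathbf{M}_n\rVert_r = \lVert \mathbf{M}_n\rVert_{r'}$ that collapses the geometric-mean bounds. The interpolation argument is slicker and makes the role of symmetry more transparent, while the paper's argument is fully self-contained and avoids invoking a black-box theorem (and the minor subtlety that Riesz--Thorin is stated over $\mathbb{C}$, though for real matrices the real and complex $\ell^r$ operator norms coincide). One small cosmetic point: in your second application the exponents on $\lVert \mathbf{M}_n\rVert_1$ and $\lVert \mathbf{M}_n\rVert_\infty$ are swapped relative to the standard Riesz--Thorin convention, but since $\lVert \mathbf{M}_n\rVert_1 = \lVert \mathbf{M}_n\rVert_\infty$ by symmetry this does not affect the conclusion.
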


\noindent We refer the reader to \cref{sec:pfauxlem} for a proof. 

Next, we introduce the Mean-Field approximation of $\PP$ (see \eqref{eq:model}), which will play an instrumental role in describing the asymptotic distribution of $T_n$.
 Recall the definition of the normalizing constant $Z_n$ from \eqref{eq:normconst}. 
 The Gibbs variational principle implies 
	$$\log{Z_n}=\sup_{\QQ}\left(\EE_{\QQ}\left[\frac{1}{2}\bs^{\top} \A_n\bs+\boldc^{\top}\bs\right]-\mathrm{KL}\big(\QQ|\prod_{i=1}^n \mu_i\big)\right),$$
	where the supremum is taken over all probability measures $\QQ$ on $[-1,1]^n$ (see \cite[\red{Equation 5.16 in pg. 135}]{wainwright2008graphical}), and is attained at $\QQ=\PP$. The Mean-Field approximation (see \cite{bishop2006pattern,blei2017variational}) for $\log Z_n$ arises by restricting the above supremum to the family of product measures, i.e.,
	\begin{align}\label{eq:MF formula}	
		\log{Z_n}&\ge \sup_{\QQ=\prod_{i=1}^n \QQ_i}\left(\EE_{\QQ}\left[\frac{1}{2}\bs^{\top}\A_n\bs+\boldc^{\top}\bs\right]-\mathrm{KL}\big(\QQ|\prod_{i=1}^n \mu_i\big)\right)=\sup_{\mathbf{z}\in [-1,1]^n} F(\mathbf{z}),
	\end{align}
	where
\begin{align}\label{eq:defF}	
 F(\mathbf{z}):=\frac{1}{2} \mathbf{z}^{\top} \A_n \mathbf{z} + \mathbf{z}^{\top} \bc - \sumin I_i(z_i).
\end{align}
	The equality \eqref{eq:MF formula} follows from simplifying the supremum on the middle term by optimizing over product measures $\QQ = \prod_{i=1}^n \QQ_i$ where each $\QQ_i$ has mean $z_i$; see e.g. Proposition 1 in \cite{yan2020nonlinear}. 
    The following lemma demonstrates that $F(\cdot)$ has a unique maximizer under a high-temperature assumption. 
	
	\begin{lemma}\label{lem:uniqueness of optimizers}
		Assume that the WHT condition \eqref{eq:wht} holds. Then the optimization problem 
		$$\sup_{\mathbf{z}\in [-1,1]^n} F(\mathbf{z})$$
		has a unique global maximizer $\bu \in (-1, 1)^n$. The maximizer $\bu$ satisfies the following fixed point equations: 
		\begin{align}\label{eq:fpeq}
			u_i &= \psi_i'( s_i + c_i),
		\end{align}
		where $s_i := \sumjn\A_n(i,j) u_j$. Additionally, the product measure
  \begin{align}\label{eq:meanfieldmeas}
		\frac{d\pQ}{d\prod_{i=1}^n \mu_i}(\bs) :\propto \prod_{i=1}^n \exp(\phi_i(u_i)\sigma_i)
	\end{align}
corresponds to the maximizer in the first supremum of \eqref{eq:MF formula}, as the mean of the $i$th marginal of $\pQ$ is exactly $u_i$.
	\end{lemma}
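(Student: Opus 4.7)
The plan is to reduce the lemma to strict concavity of $F$ on $[-1,1]^n$ under the WHT condition, and then read off the fixed-point equations and the product-measure representation from the first-order optimality conditions.

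First I would compute the Hessian of $F$ at an interior point $\mathbf{z}\in(-1,1)^n$. Since $\frac{d}{dz}I_i(z)=\phi_i(z)$ and $\phi_i'(z)=1/\psi_i''(\phi_i(z))$, we get $\nabla^2 F(\mathbf{z})=\A_n-\mathrm{Diag}\bigl(\phi_i'(z_i)\bigr)$. The key observation is that since each $\mu_i$ is supported on $[-1,1]$, the random variable $Z\sim\mu_{i,\theta}$ satisfies $\mathrm{Var}(Z)\le 1-(\EE Z)^2\le 1$, so $\psi_i''(\theta)\le 1$ for every $\theta$. Consequently $\phi_i'(z_i)\ge 1$ on $(-1,1)$, which gives
\[
\nabla^2 F(\mathbf{z})\preceq \A_n-\I_n\preceq(\rho-1)\I_n\prec 0
\]
under \eqref{eq:wht}. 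Therefore $F$ is strictly concave on $(-1,1)^n$, and any maximizer must be unique.

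Next I would settle existence and interiority. Since $I_i$ is lower semicontinuous on $[-1,1]$ and convex, $F$ is upper semicontinuous on the compact cube $[-1,1]^n$, so a maximizer $\bu$ exists. To rule out boundary maximizers, note that $I_i'(z)=\phi_i(z)\to\pm\infty$ as $z$ approaches the upper/lower edge of the essential support of $\mu_i$ inside $[-1,1]$; hence $\partial F/\partial z_i\to\mp\infty$ near such a boundary value. This forces the maximizer into $(-1,1)^n$ (if $\mu_i$ is already supported strictly inside $[-1,1]$ the conclusion is even easier). At the interior optimum, the first-order condition $\nabla F(\bu)=\mathbf{0}$ reads
\[
\sum_{j=1}^n \A_n(i,j)u_j + c_i - \phi_i(u_i)=0,
\]
which is exactly $\phi_i(u_i)=s_i+c_i$, equivalently $u_i=\psi_i'(s_i+c_i)$, giving \eqref{eq:fpeq}.

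Finally, to match $\pQ$ with the supremum in \eqref{eq:MF formula}, observe that the $i$th marginal of $\pQ$ is $\mu_{i,\phi_i(u_i)}$, whose mean is $\psi_i'(\phi_i(u_i))=u_i$ by \cref{def:expfam}, so the $i$th marginal mean is exactly $u_i$. Since $\A_n$ has zero diagonal, independence under $\pQ$ gives $\EE_{\pQ}[\tfrac12\bs^\top \A_n\bs+\bc^\top\bs]=\tfrac12\bu^\top\A_n\bu+\bc^\top\bu$, while additivity of KL over product measures yields $\mathrm{KL}(\pQ\,\|\,\prod_i\mu_i)=\sum_i I_i(u_i)$. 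The net value is $F(\bu)$, which coincides with the Mean-Field supremum by the cited result \cite[Proposition 1]{yan2020nonlinear}, confirming that $\pQ$ realizes the supremum.

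The main obstacle is really just step one—pinning down the clean bound $\phi_i'(z_i)\ge 1$ on $(-1,1)$ so that the WHT assumption translates directly into strict concavity of $F$; once this is in place the rest follows from standard convex analysis and a variance-tilt calculation.
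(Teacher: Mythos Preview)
Your proposal is correct and follows essentially the same approach as the paper: both compute $I_i''(z_i)=1/\psi_i''(\phi_i(z_i))\ge 1$ via the variance bound $\psi_i''\le 1$, use the WHT condition to conclude $\nabla^2 F\preceq (\rho-1)\I_n\prec 0$ and hence strict concavity, rule out boundary maximizers via $\phi_i(z_i)\to\pm\infty$, and read off \eqref{eq:fpeq} from $\nabla F=0$. You are in fact slightly more thorough than the paper, as you also verify explicitly that $\pQ$ attains the mean-field supremum (which the paper states but does not prove), and you handle the case where the support of $\mu_i$ may be strictly inside $[-1,1]$ more carefully.
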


\noindent Here, $\bu$ (and hence $\mathbf{s}$) depends on $(\A_n, \bc,\{\mu_i\}_{1\le i\le n})$, but we suppress this dependence for the sake of simplicity. 

The Mean-Field approximation $\pQ = \prod_{i=1}^n \pQ_i$ motivates the natural centered statistic 
 \begin{equation}\label{eq:centered1}
 T_n^*(\bs) := T_n-\sum_{i=1}^n q_i\EE_{\pQ_i}[\sigma_i] = \sum_{i=1}^n q_i(\sigma_i-u_i).
 \end{equation}
 
 \noindent If the inequality \eqref{eq:MF formula} is ``tight enough",  one may expect $T_n^*(\bs)$ to converge to a centered normal distribution. We will prove this under the following condition.
 
	\begin{assume}[Strong Mean-Field regime (SMF) assumption]\label{assn:mf}
   \begin{align}\label{eq:smf}
        \alpha_n:=\max_{i=1}^n \sumjn \A_n^2(i,j)= o(n^{-1/2}).
   \end{align}
	\end{assume}

\noindent 
We refer to the above condition as \emph{Strong Mean-Field assumption}, \red{as this implies the conclusion $ \lVert \A_n\rVert_F^2=o(\sqrt{n})$, which is stronger than the more commonly used Mean-Field assumption $\lVert \A_n\rVert_F^2=o(n)$ (see \cite{basak2017universality, yan2020nonlinear, mukherjee2022variational,Deb2024detecting,deb2020fluctuations}).} We motivate our stronger assumption by presenting our first theorem.

\subsection{Weak laws and concentration inequalities}\label{sec:meanfieldconc} 
The first main result of this Section establishes polynomial and exponential tail bounds for $T_n^*(\bs)$, when $\bs\sim\PP$.
    
    \begin{thm}\label{thm:LLN}
Suppose $\bs\sim\PP$. Assume that the MHT condition \eqref{eq:mht} holds. Then
  we have:
\begin{align}\label{eq:firstbd}		
  \EE\left[T_n^*(\bs)\right]^2 \lesssim \max\big(1, n\alpha_n^2\big).
\end{align}
        Further, if the SHT condition \eqref{eq:sht} holds, then 
        there exist positive constants $h_1$ and $h_2$, such that given any $t>0$, we have:
\begin{align}\label{eq:secondbd}
  \PP\big(\big|T_n^*(\bs)\big|\ge t\big)\le h_1\exp(-h_2 t^2)+h_1 \exp\left(-\frac{h_2  t}{\sqrt{n}\an}\right).
\end{align}
Here, all implicit constants only depend on $\rho$ (see \eqref{eq:mht}).
	\end{thm}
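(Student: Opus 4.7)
The $L^2$ bound \eqref{eq:firstbd} and the concentration bound \eqref{eq:secondbd} share a common starting point. Define the ``cavity residuals'' $R_i := \sigma_i - \psi_i'(m_i + c_i)$; since $\psi_i'(m_i + c_i) = \EE[\sigma_i \mid \bolds_{-i}]$, each $R_i$ is a conditional martingale difference with $\EE[R_i \mid \bolds_{-i}] = 0$. Combining this with the fixed-point equation $u_i = \psi_i'(s_i + c_i)$ from \cref{lem:uniqueness of optimizers} and a first-order Taylor expansion of $\psi_i'$ between $s_i + c_i$ and $m_i + c_i$ gives, with $D_i := \sigma_i - u_i$, the random linear identity
\[
\mathbf{D} = \mathbf{R} + \Psi\,\A_n \mathbf{D}, \qquad \Psi := \diag\big(\psi_i''(\xi_i)\big),
\]
where $\xi_i$ lies between $s_i + c_i$ and $m_i + c_i$, and $\|\Psi\|_{\rm op} \le 1$ almost surely (since $\psi_i''$ is a variance on $[-1,1]$). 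Under MHT, $\|\Psi\A_n\|_4 \le \|\A_n\|_4 \le \rho < 1$, so $\mathbf{I} - \Psi\A_n$ is invertible with norm controlled uniformly in $\bolds$.

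For \eqref{eq:firstbd}, the plan is to decompose $\EE[T_n^*(\bs)^2] = \Var(\bq^\top \bolds) + (\bq^\top \mathbf{b})^2$ with $\mathbf{b} := \EE[\bolds] - \bu$, and bound the two pieces separately. For the variance, write $\EE[T_n^{*2}] = \sum_j q_j \EE[T_n^* D_j]$ and substitute $D_j = R_j + \psi_j''(\xi_j)\sum_k \A_n(j,k)D_k$: the martingale property $\EE[R_j \cdot Y] = 0$ for any $Y \in \sigma(\bolds_{-j})$ collapses cross terms, leaving the diagonal contribution $\sum_j q_j^2 \EE[R_j^2] \le \|\bq\|^2 = 1$, plus a residual quadratic in $\mathbf{D}$ that is absorbed via the operator bound $\|\Psi\A_n\|_4 \le \rho$. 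For the bias, a second-order Taylor expansion of $\EE\psi_i'(m_i+c_i)$ around $s_i + c_i$ yields an approximate fixed-point equation $(\mathbf{I} - \tilde\Psi\A_n)\mathbf{b} = \mathbf{r}$ whose remainder has components governed by $\EE[(m_i - s_i)^2]$; a careful estimate using $m_i - s_i = \sum_j \A_n(i,j) D_j$ and the row-sum constraint $\alpha_n = \max_i \sum_j \A_n^2(i,j)$ gives $\|\mathbf{r}\|^2 \lesssim n\alpha_n^2$, whence $\|\mathbf{b}\|^2 \lesssim n\alpha_n^2$ by invertibility.

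For \eqref{eq:secondbd}, I would invoke Chatterjee's concentration inequality via exchangeable pairs. Construct $(\bolds, \bolds')$ through Glauber dynamics: draw $I$ uniformly from $\{1,\ldots,n\}$ and resample $\sigma_I$ from its conditional law given $\bolds_{-I}$ to obtain $\sigma_I'$. Set the antisymmetric function $G(\bolds, \bolds') := n q_I(\sigma_I - \sigma_I')$, for which
\[
f(\bolds) := \EE[G \mid \bolds] = \sum_i q_i R_i,
\]
with $\EE f(\bolds) = 0$. The central estimate is on the variance proxy $v(\bolds) := \tfrac12\,\EE[|G(\bolds, \bolds')|\cdot|f(\bolds) - f(\bolds')|\mid \bolds]$, which I aim to show satisfies $v(\bolds) \le B_1 + B_2\sqrt{n}\,\alpha_n |T_n^*(\bs)|$ for constants $B_1, B_2$ depending only on $\rho$. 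The bounded $B_1$-piece arises from the $j = I$ summand in $f(\bolds) - f(\bolds')$ (of order $|q_I|$) and yields the sub-Gaussian tail $e^{-h_2 t^2}$; the $\sqrt{n}\,\alpha_n$ piece aggregates the $j \ne I$ summands (each of order $|\A_n(j,I)|$) and yields the sub-exponential tail $e^{-h_2 t/(\sqrt{n}\alpha_n)}$. Chatterjee's theorem then gives the desired concentration of $f(\bolds)$, and a final step relates $f(\bolds)$ back to $T_n^*(\bs)$ through $T_n^*(\bs) - f(\bolds) = \bq^\top \Psi\A_n \mathbf{D}$, absorbed via SHT together with the $L^2$ estimate already established.

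\textbf{Main obstacle.} The delicate point is obtaining the sharp $\sqrt{n}\,\alpha_n$ scaling in the sub-exponential tail. A naive Hamming--Lipschitz bound on $T_n^*$ would only yield a sub-Gaussian bound with variance of order $n$, far too crude. Extracting the $\alpha_n$ factor forces exploitation of the local-field structure: changing $\sigma_I$ shifts each $m_j$ by exactly $\A_n(j, I)$, and the shift vector $(\A_n(j,I))_{j=1}^n$ has $\ell_2$-norm at most $\sqrt{\alpha_n}$. Tracking these aggregated shifts through the variance proxy $v(\bolds)$ without losing this rate, and simultaneously handling the random weights $\psi_j''(\xi_j)$ in $\Psi$, is the principal technical hurdle.
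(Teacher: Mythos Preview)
Your decomposition $\mathbf{D} = \mathbf{R} + \Psi\A_n\mathbf{D}$ is correct, and the martingale identity $\sum_j q_j\,\EE[T_n^* R_j] = \sum_j q_j^2\,\EE[R_j^2] \le 1$ works as you describe. The gap is in the absorption step. Because $\xi_i$ lies between $s_i+c_i$ and the random $m_i+c_i$, your $\Psi$ is \emph{random} and correlated with $\mathbf{D}$. The residual term is $\EE\big[T_n^*\cdot \bq^\top\Psi\A_n\mathbf{D}\big] = \EE\big[T_n^*\cdot \tilde\bq^\top\mathbf{D}\big]$ where $\tilde\bq := \A_n\Psi\bq$ is a random vector with $\|\tilde\bq\|\le\rho$; but there is no reason for $\EE[(\tilde\bq^\top\mathbf{D})^2]\le\rho^2\,\EE[T_n^{*2}]$, since $\tilde\bq$ depends on $\bs$ and $\bq^\top\mathbf{D}\ne\tilde\bq^\top\mathbf{D}$. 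Iterating does not help: the vectors $\bq^{[\ell]}:=(\A_n\Psi)^\ell\bq$ remain random, so one cannot invoke the sub-Gaussian bound of \cref{lem:sum L2 bound} on $\bq^{[\ell]\top}\mathbf{R}$. The paper's fix is precisely to avoid this: expand $b_i-u_i$ to \emph{second} order around the deterministic $s_i+c_i$, so the first-order coefficient is the deterministic $\psi_i''(s_i+c_i)$. The recursion $\bq^{(\ell)}=\mathbf{C}_n\bq^{(\ell-1)}$ with $\mathbf{C}_n=\A_n\,\mathrm{Diag}(\psi_i''(s_i+c_i))$ is then deterministic, and each $\sum_i q_i^{(\ell)}(\sigma_i-b_i)$ is handled by \cref{lem:sum L2 bound}. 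The price is a second-order remainder $\lesssim\|\mathbf{m}-\mathbf{s}\|_4^2$, controlled separately via the contraction \cref{lem:m-n contraction}.

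Your plan for \eqref{eq:secondbd} also misidentifies the mechanism. The variance proxy $v(\bs)$ for $f(\bs)=\sum_i q_iR_i$ satisfies $v(\bs)\lesssim \sum_I q_I^2 + \sum_{i,I}|q_iq_I\A_n(i,I)| = O(1)$ under SHT; there is no $\sqrt{n}\,\alpha_n|T_n^*|$ term, and Chatterjee's method gives purely sub-Gaussian tails for $f$ (this is exactly \cref{lem:sum L2 bound}(a)). The sub-exponential piece $\exp(-h_2t/(\sqrt{n}\alpha_n))$ does not come from the linear martingale at all: it arises from the quadratic residual $\|\mathbf{m}-\mathbf{s}\|_4^2$, whose moments $\EE\|\mathbf{m}-\mathbf{s}\|_4^{2r}\lesssim n(\alpha_n/c)^r r^{r}$ (from \cref{lem:m-n contraction}(c)) translate, via an MGF argument, into a sub-exponential tail at scale $\sqrt{n}\,\alpha_n$. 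Your final step, passing from concentration of $f$ to concentration of $T_n^*$ ``via SHT together with the $L^2$ estimate'', cannot work: an $L^2$ bound on $T_n^*-f=\sum_i q_i(b_i-u_i)$ gives no tail control. You would need to repeat the full telescoping for this difference, which is what the paper does from the outset.
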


        \cref{thm:LLN} has some important consequences. First, under the SMF condition \eqref{eq:smf} ($\alpha_n\ll n^{-1/2})$, \cref{thm:LLN} immediately implies that 
        $$ T_n^*(\bs) = O_P(1), \quad \mbox{where} \quad \bs\sim \PP.$$ This is what one would expect if $\PP$ were a product distribution with mean vector ${\bu}$, so this bound is the best possible, and one cannot expect a better result even under independence. Secondly,~\cref{thm:LLN} gives non-trivial conclusions even without the SMF condition~\eqref{eq:smf}. To see this, take ${\bf q}=n^{-1/2}{\bf 1}_n$. The law of large numbers $$\overline{\bs}-\overline{\bu}:=n^{-1}\sumin \sigma_i-n^{-1}\sumin u_i=o_P(1)$$ holds as soon as $\alpha_n\to 0$. It is well known that Mean-Field centering can fail if $\alpha_n$ stays bounded away from $0$ (see e.g., \cite{qiu2023tap,celentano2023mean}), so the requirement $\alpha_n\to 0$ in the above display is tight.
       
 \subsection{Central Limit Theorems}\label{sec:clt}
 
 Following the law of large numbers studied in~\cref{thm:LLN}, it is natural to wonder about the non-degenerate fluctuations of $T_n^*(\bs)$ (see \eqref{eq:centered1}) where $\bs\sim \PP$.  This is formalized in the following result, which provides a finite-sample Berry-Esseen type bound for the Kolmogorov–Smirnov distance (see \eqref{eq:ksdist}) between the law of $T_n^*(\bs)$ and a centered Gaussian distribution. 
\begin{thm}\label{thm:clt}
    Suppose that $\bs\sim \PP$ and the MHT condition \eqref{eq:mht} holds. Let $\bu$ be the Mean-Field optimizers as in  \cref{lem:uniqueness of optimizers}. 
    For all $i= 1,\ldots,n$, let $\nu_i := \sumjn \A_n(i,j) \psi_j'(c_j)$.
   Assume $\upsilon_n:=\sumin q_i^2 \psi_i''(c_i) \red{\ge \kappa}$ for some constant \red{$\kappa \in (0,1)$}, and define the following:
    \begin{equation}\label{eq:random field}
       R_{1n} := \sumin \left(\sumjn \A_n(i,j) q_j (\psi_j''(c_j) - \upsilon_n)\right)^2,\qquad
        R_{2n} := \sumin \nu_i^2,\qquad   R_{3n}:=\sumin \nu_i^4.
    \end{equation}
   Fix $\lambda_n$ such that $\red{|\lambda_n| \le \rho}$ (see \eqref{eq:mht}), and set $\boldsymbol{\epsilon} := \A_n \bq - \lambda_n \bq$. For $W_n \sim \mcn \left(0, \frac{\upsilon_n}{1 - \lambda_n \upsilon_n} \right)$, we have
    \begin{align}\label{eq:ksbd}
    &\;\;\;d_{KS}\left(T_n^*(\bs) \, ,\, W_n\right)\lesssim \sqrt{R_{1n}} + \sqrt{\an R_{2n}} + \sqrt{R_{3n}} + \sqrt{n} \an + \lVert \bq\rVert_{\infty} + \lVert \boldsymbol{\epsilon} \rVert .
    \end{align}
    Here the implicit constant only depends on $\rho$ and $\kappa$.
\end{thm}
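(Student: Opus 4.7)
The approach is Stein's method of exchangeable pairs in the Chatterjee--Shao--Zhang framework, with the Mean-Field optimizer $\bu$ from \cref{lem:uniqueness of optimizers} supplying the natural centering. I would construct $(\bs, \bs')$ by single-site Glauber resampling: draw $I \sim \mathrm{Unif}\{1,\ldots,n\}$ and replace $\sigma_I$ by a fresh $\sigma_I'$ sampled from the conditional distribution of $\sigma_I$ given $\bs_{-I}$. By \eqref{eq:model}, this conditional law is proportional to $\exp\!\bigl((m_I + c_I)\sigma_I\bigr)\,d\mu_I(\sigma_I)$, so $\EE[\sigma_I' \mid \bs_{-I}] = \psi_I'(m_I + c_I)$. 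Writing $W := T_n^*(\bs)$ and $\Delta := W' - W = q_I(\sigma_I' - \sigma_I)$, we immediately get $|\Delta| \le 2\|\bq\|_\infty$ and
\[
\EE[\Delta \mid \bs] \;=\; \frac{1}{n}\sum_{i=1}^n q_i\bigl[\psi_i'(m_i + c_i) - \sigma_i\bigr].
\]

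The heart of the argument is the approximate linear regression $\EE[\Delta \mid \bs] = -\tfrac{1 - \lambda_n \upsilon_n}{n}\,W + R$ together with the variance estimate $\EE[\Delta^2 \mid \bs] = \tfrac{2 \upsilon_n}{n} + R'$. With $\lambda := (1 - \lambda_n \upsilon_n)/n$ and $\sigma^2 := \upsilon_n/(1 - \lambda_n \upsilon_n)$ the consistency $2\lambda\sigma^2 = 2\upsilon_n/n$ holds automatically, so the Shao--Zhang Berry--Esseen theorem for exchangeable pairs converts controls of $R$, $R'$, and $\|\Delta\|_\infty$ into the desired Kolmogorov bound against $N(0,\sigma^2)$. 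To establish the regression I would Taylor expand $\psi_i'(m_i + c_i)$ around $c_i$ and, separately, $u_i = \psi_i'(s_i + c_i)$ around $c_i$, using that $s_i \approx \nu_i$ at leading order since $u_j \approx \psi_j'(c_j)$. After subtracting and regrouping, the linear-in-$(\sigma_j - u_j)$ contribution becomes $\frac{1}{n}\sum_j(\sigma_j - u_j)\sum_i \A_n(j,i)\,q_i\psi_i''(c_i)$. Adding and subtracting $\upsilon_n(\A_n\bq)_j$ inside the inner sum, and then using $\A_n\bq = \lambda_n\bq + \boldsymbol{\epsilon}$, produces the target coefficient $\lambda_n \upsilon_n/n$ in front of $W$, at the cost of two residuals whose $L^1$ norms are controlled by $\sqrt{R_{1n}}$ and $\|\boldsymbol{\epsilon}\|_2$ (via Cauchy--Schwarz together with \cref{thm:LLN} applied to the residual vectors as weights). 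The quadratic Taylor remainders, again controlled through \cref{thm:LLN} applied with the normalized $i$th row of $\A_n$ as the weight vector, combined with $\|\bq\|_2 = 1$, contribute the $\sqrt{R_{3n}}$, $\sqrt{\an R_{2n}}$, and $\sqrt{n}\,\an$ pieces. The variance estimate is obtained by an analogous Taylor expansion of $\psi_i''(m_i+c_i)$ around $c_i$ combined with $\sum_i q_i^2 \psi_i''(c_i) = \upsilon_n$, and the $\|\bq\|_\infty/\upsilon_n$ term in \eqref{eq:ksbd} is contributed by the Shao--Zhang bound on $\|\Delta\|_\infty^3 / (\lambda\sigma^4)$.

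The principal obstacle is the second step: coupling the two Taylor expansions (one for $\psi_i'(m_i+c_i)$, one for $u_i$) cleanly enough that each of the five error structures $\sqrt{R_{1n}}$, $\sqrt{\an R_{2n}}$, $\sqrt{R_{3n}}/\upsilon_n$, $\sqrt{n}\,\an/\upsilon_n$, and $\|\boldsymbol{\epsilon}\|_2$ emerges as stated rather than merging into coarser composite bounds. The crucial enabler throughout is \cref{thm:LLN}, which upgrades the soft smallness of $m_i - s_i$ to sharp $L^2$ control and thereby permits every higher-order Taylor remainder to be bounded in mean square rather than uniformly.
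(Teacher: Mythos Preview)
Your plan is structurally correct and matches the paper's argument at the level of strategy: the same exchangeable pair via single-site Glauber resampling, the same Shao--Zhang Berry--Esseen theorem (their Corollary~2.1), and the same extraction of the regression coefficient $(1-\lambda_n\upsilon_n)/n$ by adding and subtracting $\upsilon_n(\A_n\bq)_j$ and then invoking $\A_n\bq=\lambda_n\bq+\boldsymbol{\epsilon}$.

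The one substantive divergence is the expansion point. You propose two separate Taylor expansions of $\psi_i'(m_i+c_i)$ and $u_i=\psi_i'(s_i+c_i)$, each around $c_i$; the paper instead performs a \emph{single} expansion of $b_i=\psi_i'(m_i+c_i)$ around $s_i+c_i$, using the fixed-point identity $u_i=\psi_i'(s_i+c_i)$ directly. The paper's route yields the linear coefficient $\psi_i''(s_i+c_i)$ (which must then be converted to $\psi_i''(c_i)$, at the cost of the $\sqrt{\alpha_n R_{2n}}$ term) together with a clean quadratic remainder $(m_i-s_i)^2$. Your route produces $\psi_i''(c_i)$ immediately but leaves a messier quadratic remainder involving $m_i^2$ and $s_i^2$ with \emph{different} Lagrange intermediate points; bounding these separately via $m_i^2\lesssim s_i^2+(m_i-s_i)^2$ still recovers the $\sqrt{R_{3n}}+\sqrt{n}\alpha_n$ contributions. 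Both routes work; the paper's localizes all randomness in $m_i-s_i$ and defers contact with $s_i$ to deterministic norm bounds, which makes the bookkeeping cleaner.

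Two places in your sketch need more care. First, the controls you attribute to \cref{thm:LLN} applied rowwise give only second-moment bounds on $m_i-s_i$; the paper actually relies on the sharper $\ell^4$-type estimates $\EE\sum_i(m_i-s_i)^4\lesssim n\alpha_n^2$ and $\sum_i s_i^4\lesssim\sum_i\nu_i^4=R_{3n}$ (Lemmas~\ref{lem:m-n contraction}(b) and~\ref{lem:optimizer u l2 concentration}(b)), and these are precisely where the MHT condition (rather than WHT) enters, via contraction in the $(4,4)$-operator norm. Your heuristic ``$s_i\approx\nu_i$'' is exactly Lemma~\ref{lem:optimizer u l2 concentration}, but it requires its own fixed-point contraction argument and does not follow from \cref{thm:LLN}. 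Second, the conditional second moment $\EE[\Delta^2\mid\bs]$ is not simply $\frac{2}{n}\sum_i q_i^2\psi_i''(m_i+c_i)$: there are two additional conditionally-centered pieces, namely $\sum_i q_i^2(\sigma_i^2-\EE[\sigma_i^2\mid\bs_{-i}])$ and $\sum_i q_i^2(\sigma_i-b_i)b_i$, each of which the paper shows is $O(\|\bq\|_\infty)$ via a separate concentration lemma (Lemma~\ref{lem:sum L2 bound}). Your variance sketch addresses only the $\psi_i''$ piece.
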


\begin{remark}[Eigenvalue-eigenvector]\label{rem:eigen}
    The upper bound in \cref{thm:clt} is trivial unless  $\| \boldsymbol{\epsilon} \| =\lVert \A_n \bq - \lambda_n \bq\rVert\to 0$. In other words, we need $(\lambda_n,\bq)$ to be an approximate eigenvalue-eigenvector pair. Note that, if $\lVert \A_n \bq - \lambda_n \bq\rVert\to 0$, then 
    $$\limsup\limits_{n\to\infty}|\lambda_n|\le \limsup\limits_{n\to\infty}\,\lVert \A_n\bq\rVert\le \limsup\limits_{n\to\infty}\,\lVert \A_n\rVert_4 \le  \rho < 1,$$
    where we have used \cref{lem:normorder} coupled with the MHT condition \eqref{eq:mht}. Further, $$ \upsilon_n= \sumin q_i^2 \psi_i''(c_i)\le 1.$$
    Hence, the Normal distribution $W_n$ is well-defined for all $n$ large enough, as its variance is bounded away from $\infty$. 
    \end{remark}

\begin{remark}[Delocalization]\label{rem:eigen2}
    For $T_n^*(\bs)$ to be asymptotically normal, we also need $\bq$ to be delocalized in the sense that $\lVert \bq\rVert_{\infty}=\lVert \bq\rVert_{\infty}/\lVert \bq \rVert\to 0$, i.e., a single term cannot dominate the summand in $T_n^*(\bs)$. This is reminiscent of the Uniform Asymptotic Negligibility (UAN) condition in the context of the usual central limit theorem with independent observations.
\end{remark}

\begin{remark}[Necessity of  \eqref{eq:smf}]\label{rmk:necessity of smf}
    Note that, for $T_n^*(\bs)$ to converge weakly to a mean $0$ Gaussian distribution, the SMF condition \eqref{eq:smf} is required. If $\sqrt{n}\alpha_n$ converges to a positive real instead, then the limiting Normal may not be centered at $0$ for specific choices of $\A_n$ and $\bc$; see e.g. \cite[Example 1.3]{deb2020fluctuations}. In this sense, the SMF condition is tight.
\end{remark}

   For both Theorems \ref{thm:LLN} and \ref{thm:clt}, we have centered $T_n(\bs)$ by subtracting $\bq^{\top}\bu$, where the Mean-Field solution $\bu$ is obtained from \cref{lem:uniqueness of optimizers}. Typically, ${\bu}$ needs to be computed using numerical optimization methods. The purpose of the following result is to show that, at the expense of some additional error terms, $\sumin q_i\sigma_i$ has Gaussian fluctuations under $\PP$, around a more explicit function of $\bq$, $\A_n$, and $\bc$. 
   
\begin{thm}\label{thm:CLT for u}
    Suppose $\bs\sim \PP$ and consider the same notation as in \cref{thm:clt}. 
    Suppose that the MHT condition \eqref{eq:mht} holds and assume $|\lambda_n| \le \rho, \upsilon_n \ge \kappa > 0$. Recall that $\boldsymbol{\epsilon} = \A_n \bq - \lambda_n \bq$. Set $\Psi'(\bc) := \left(\psi_1'(c_1), \ldots, \psi_n'(c_n) \right)^\top$ and 
    $$R_{4n} := \left|\sum_{1 \le i,j \le n} \A_n(i,j) q_i(\psi_i''(c_i) - \upsilon_n) \psi_j'(c_j) \right|.$$
    \begin{enumerate}
        \item[(a)]
Then we have
    \begin{align}\label{eq:CLT for u}
         \bigg|\bq^\top \left(\bu - \frac{\Psi'(\bc)}{1 - \lambda_n \upsilon_n} \right)\bigg| \lesssim \sqrt{R_{1n} R_{2n}} + \sqrt{R_{3n}} + R_{4n} + \sqrt{R_{2n}}\lVert \boldsymbol{\epsilon} \rVert + \big|\boldsymbol{\epsilon}^{\top}\Psi'(\bc)\big|.
    \end{align}

 \item[(b)]  Further we have
    \begin{align}
    &\;\;\;\;d_{KS}\left(\sumin q_i\left(\sigma_i-\frac{\psi_i'(c_i)}{1-\lambda_n\upsilon_n}\right)\, , \, W_n\right)\nonumber\\ &\lesssim \sqrt{R_{2n}}\left(\sqrt{R_{1n}} + \sqrt{\alpha_n} + \lVert \boldsymbol{\epsilon}\rVert\right) + \sqrt{R_{1n}} +  \sqrt{R_{3n}}  + \sqrt{n} \an + \lVert \bq\rVert_{\infty}+ R_{4n} +\big|\boldsymbol{\epsilon}^{\top}\Psi'(\bc)\big| + \red{\|\boldsymbol{\epsilon}\|}.\label{eq:erbdcltforu}
    \end{align}
    In both cases, the implicit constant depends only on $\rho$ from \eqref{eq:mht} and $\kappa$.
    \end{enumerate}
\end{thm}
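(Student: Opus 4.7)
The strategy is to prove part (a) by Taylor expanding the Mean-Field fixed-point equation from~\cref{lem:uniqueness of optimizers}, then deduce part (b) from (a) combined with~\cref{thm:clt} via a shift argument on the KS distance.

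For part (a), I would start from $u_i = \psi_i'(s_i + c_i)$ with $s_i := (\A_n\bu)_i$. Since $\mu_i$ is supported on $[-1,1]$, the derivatives of $\psi_i$ are uniformly bounded, so a second-order Taylor expansion around $c_i$ gives $u_i = \psi_i'(c_i) + \psi_i''(c_i)\,s_i + O(s_i^2)$. Taking inner product with $\bq$, decomposing $\psi_i''(c_i) = \upsilon_n + (\psi_i''(c_i) - \upsilon_n)$, and using $\A_n\bq = \lambda_n\bq + \boldsymbol{\epsilon}$ yields
$$
\bq^\top\bu \;=\; \bq^\top\Psi'(\bc) \;+\; \lambda_n\upsilon_n\,\bq^\top\bu \;+\; \upsilon_n\,\boldsymbol{\epsilon}^\top\bu \;+\; \sumin q_i\bigl(\psi_i''(c_i)-\upsilon_n\bigr)s_i \;+\; O\!\Bigl(\sumin q_i s_i^2\Bigr).
$$
The key a priori estimate I would establish is $\lVert\bu - \Psi'(\bc)\rVert \lesssim \sqrt{R_{2n}}$. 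This follows from $|u_i - \psi_i'(c_i)| \leq |s_i|$ together with the contraction $\lVert\bs\rVert \leq \lVert\A_n\Psi'(\bc)\rVert + \lVert\A_n\rVert\,\lVert\bu - \Psi'(\bc)\rVert \leq \sqrt{R_{2n}} + \rho\,\lVert\bs\rVert$, using $\lVert\A_n\rVert \leq \rho$ (by MHT and~\cref{lem:normorder}).

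With this linchpin in hand, the residuals are matched to the stated bound as follows: (i) the cross term $\sumin q_i(\psi_i''(c_i) - \upsilon_n) s_i$ is rewritten, via a sum swap, as $\sum_j u_j v_j$ with $v_j := \sum_i \A_n(i,j) q_i (\psi_i''(c_i) - \upsilon_n)$ satisfying $\sum_j v_j^2 = R_{1n}$; then the decomposition $u_j = \psi_j'(c_j) + (u_j - \psi_j'(c_j))$ extracts $\pm R_{4n}$ and leaves $O(\sqrt{R_{1n} R_{2n}})$; (ii) the Taylor remainder $\sumin q_i s_i^2$ is split using $s_i = \nu_i + (s_i - \nu_i)$, with the dominant piece giving $\sqrt{R_{3n}}$ via Cauchy-Schwarz against $\lVert\bq\rVert = 1$ and the correction controlled by $\lVert\bs - \boldsymbol{\nu}\rVert = \lVert\A_n(\bu - \Psi'(\bc))\rVert \lesssim \sqrt{R_{2n}}$; (iii) $\upsilon_n\boldsymbol{\epsilon}^\top\bu$ splits as $\upsilon_n\boldsymbol{\epsilon}^\top\Psi'(\bc) + O(\lVert\boldsymbol{\epsilon}\rVert\sqrt{R_{2n}})$. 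Rearranging and dividing by $1 - \lambda_n\upsilon_n \geq 1 - \rho > 0$ (via~\cref{rem:eigen}) then yields~\eqref{eq:CLT for u}.

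For part (b), I would write $\sumin q_i\sigma_i - \bq^\top\Psi'(\bc)/(1-\lambda_n\upsilon_n) = T_n^*(\bs) + \Delta$, where $\Delta := \bq^\top\bu - \bq^\top\Psi'(\bc)/(1-\lambda_n\upsilon_n)$ is deterministic and controlled by~(a). The triangle inequality gives
$$
d_{KS}\bigl(T_n^*(\bs)+\Delta,\,W_n\bigr) \;\leq\; d_{KS}\bigl(T_n^*(\bs),\,W_n\bigr) \;+\; \lVert f_{W_n}\rVert_\infty\cdot|\Delta|,
$$
where the first term is bounded by~\cref{thm:clt} and $\lVert f_{W_n}\rVert_\infty = (2\pi\upsilon_n/(1-\lambda_n\upsilon_n))^{-1/2}$ is the standard Gaussian density bound. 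Substituting (a), using $\upsilon_n \leq 1$ to absorb the $1/\sqrt{\upsilon_n}$ factors on the $\sqrt{R_{3n}}$ and $\lVert\bq\rVert_\infty$ terms into the $1/\upsilon_n$ terms already present in~\cref{thm:clt}, and collecting terms yields~\eqref{eq:erbdcltforu}.

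The main obstacle is the bookkeeping in part (a): three distinct types of decomposition (Taylor expansion in $s_i$, contraction estimate for $\bu - \Psi'(\bc)$, and sum-swaps against $\A_n$) must be interleaved so that the residual fits exactly into the combination of $R_{1n},R_{2n},R_{3n},R_{4n}$, $\lVert\boldsymbol{\epsilon}\rVert\sqrt{R_{2n}}$, and $|\boldsymbol{\epsilon}^\top\Psi'(\bc)|$. The recursive bound $\lVert\bu - \Psi'(\bc)\rVert \lesssim \sqrt{R_{2n}}$ is the key device that closes each step without incurring a $\sqrt{n}$-type loss that would arise from the trivial bound $\lVert\bu\rVert \leq \sqrt{n}$.
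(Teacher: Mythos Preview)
Your approach for both parts mirrors the paper's closely: Taylor expand the fixed-point equation $u_i = \psi_i'(s_i + c_i)$ around $c_i$, take the inner product with $\bq$, use $\A_n\bq = \lambda_n\bq + \boldsymbol{\epsilon}$ to extract $\lambda_n\upsilon_n\,\bq^\top\bu$, and close everything with a contraction estimate on $\bs$. The paper packages your contraction as a separate lemma (its~\cref{lem:optimizer u l2 concentration}), and part (b) is handled identically via a shift of the KS distance (the paper's~\cref{lem:ksbound} is your density bound).

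There is one genuine gap in your step (ii). Your splitting $s_i = \nu_i + (s_i - \nu_i)$ together with only the $\ell^2$ bound $\lVert\bs - \boldsymbol{\nu}\rVert \lesssim \sqrt{R_{2n}}$ does not recover the claimed $\sqrt{R_{3n}}$ for the Taylor remainder. The cross piece $\sumin q_i\,\nu_i(s_i - \nu_i)$ is at best controlled by $(\sumin q_i^2\nu_i^2)^{1/2}\lVert\bs-\boldsymbol{\nu}\rVert$, which yields a residual of order $R_{2n}$ (or $\lVert\bq\rVert_\infty^{1/2}R_{3n}^{1/4}R_{2n}^{1/2}$); and $R_{2n}$ is \emph{not} dominated by any term in~\eqref{eq:CLT for u}. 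The fix is to run your contraction in $\ell^4$: under MHT you have $\lVert\A_n\rVert_4 \le \rho$, and since $|u_j - \psi_j'(c_j)| \le |s_j|$ the same recursion gives $\lVert\bs\rVert_4 \le \lVert\boldsymbol{\nu}\rVert_4 + \rho\lVert\bs\rVert_4$, hence $\lVert\bs\rVert_4 \lesssim \lVert\boldsymbol{\nu}\rVert_4$. Then directly
\[
\Big|\sumin q_i e_i\Big| \;\le\; \lVert\bq\rVert\,\lVert\be\rVert \;\lesssim\; \Big(\sumin s_i^4\Big)^{1/2} \;\lesssim\; \Big(\sumin \nu_i^4\Big)^{1/2} \;=\; \sqrt{R_{3n}},
\]
with no splitting needed. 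This is exactly the paper's route (via~\cref{lem:optimizer u l2 concentration}(b)), and it is the one place in part (a) where MHT, rather than merely $\lVert\A_n\rVert \le \rho$, is genuinely used.
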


In \cref{thm:CLT for u}, neither the centering term nor the variance of $W_n$ requires the computation of the Mean-Field solution $\bu$. This is the main benefit of~\cref{thm:CLT for u} when compared to~\cref{thm:clt}.

\subsection{Proof overview and main challenges}

This paper studies concentration and CLT for linear statistics in a Gibbs measure with a quadratic Hamiltonian. The centering for both the concentration and CLT is provided by the vector ${\bf u}$, which is the expectation of the Mean-Field variational approximation to this Gibbs measure. Under a suitable high temperature assumption on the interaction matrix of the quadratic statistic, the mean field optimizer is unique. However, as opposed to a Dobrusin type high temperature assumption which involves the $\ell_\infty$ operator norm, this paper works with the $\ell_4$ operator norm, which is much less restrictive. In particular, this allows us to verify a CLT for a broader class of asymptotic regimes, as demonstrated in Example \ref{ex:Hopfield clt}. We stress that our techniques allow for both ferromagnetic and anti-ferromagnetic models, as well as some spin glass models, as demonstrated via examples.
\\

Our first main result is a concentration for linear statistics (see \cref{thm:LLN}), centered via the Mean-Field centering. The starting point of our proof is a concentration for linear statistics with a centering via conditional expectation $$\EE[\sigma_i|(\sigma_j, j\ne i)]=\psi'(m_i+c_i),$$
 which we derive  using the method of exchangeable pairs. Here $m_i=\sum_{j=1}^n\A_n(i,j)\sigma_j$ is the local field at $i$, and we assume common base measures $\mu_i=\mu$ to simplify the notation $\psi_i = \psi$. The main challenge is to derive a concentration with respect to the Mean-Field centering, given by an implicit vector ${\bf u}$ which satisfies the fixed point equation $$u_i=\psi'(s_i+c_i),\qquad \text{ where }\qquad s_i=\sum_{j=1}^n\A_n(i,j)u_j.$$ In order to get a concentration with respect to ${\bf u}$ we can write
\begin{align*}
\sum_{i=1}^nq_i(\sigma_i-u_i)=\sum_{i=1}^nq_i(\sigma_i-\psi'(m_i+c_i))+\sum_{i=1}^nq_i(\psi'(m_i+c_i)-u_i).
\end{align*}
The first term in the RHS above can be controlled using the conditionally centered concentration. Proceeding to bound the second term, a Taylor's series expansion gives
\begin{align*}
     \sumin q_i(&\psi'(m_i+c_i) - u_i) = \sumin q_i(\psi'(m_i+c_i) - \psi'(s_i+c_i)) \\
     &= \sumin q_i (m_i - s_i) \psi''(s_i + c_i) + \red{\frac{1}{2}} \sumin q_i (m_i - s_i)^2 \psi'''(\xi_i + c_i) \\
     &= \sumij \A_n(i,j) q_i (\sigma_j - \red{u_j}) \psi''(s_i + c_i) + O\Big(\sumin |q_i| (m_i - s_i)^2 \Big) \\
     &= \sumjn \tilde{q}_j (\sigma_j - \red{u_j}) + O\Big(\sqrt{\sumin  (m_i - s_i)^4} \Big),
\end{align*}
where $$\tilde{q}_j :=\sum_{\red{i=1}}^n\C_n(i,j)q_i,\qquad \text{ with }\qquad \C_n(i,j)= \A_n(i,j) \psi''(s_i+c_i).$$ %
Thus we have reduced the problem of concentration of ${\bf q}^\top \bs$ to that of ${\bf q}^\top \C_n\bs$. In this way, a recursive argument allows us to focus on the behavior of ${\bf q}^\top \C_n^\ell \bs$ for some large positive integer $\ell$. Under the high temperature assumption we have that above sequence of random variables converge to $0$ geometrically fast, thereby allowing us to derive the desired concentration about $\bu$. Controlling the error term $\sum_{i=1}^n(m_i-s_i)^4$ necessitates the high temperature assumption $\|\A_n\|_4\le \rho<1$ via the fourth operator norm. We note here that if we had additional structure on ${\bf q}$ (say $\|{\bf q}\|_\infty\lesssim n^{-1/2})$, then the error bound could have been controlled by the weaker high temperature assumption $\|\A_n\|\le \rho<1$. Instead we focus to avoid extra assumptions on the eigenvector, and work with the fourth operator norm. 
\\

Next, the proof \cref{thm:clt} is based on Stein's method for exchangeable pairs. For simplicity, we sketch the proof assuming that $\A_n \bq = \lambda_n \bq$. To elaborate, we first construct an exchangable pair $(\bs, \bs')$ by the usual way:

Let $I$ be a randomly sampled index from $\{1,2,\ldots ,n\}$. Given $I=i$, replace $\sigma_i$ with an independently generated $\sigma'_i$ drawn from the conditional distribution of $\sigma_i$ given $\sigma_j,\ j\neq i$. Then, setting
$$T_n^*(\bs')=\bq^{\top}(\bs'-\bu), \quad \text{we have}\quad T_n^*(\bs)-T_n^*(\bs')=q_I(\sigma_I-\sigma_I').$$ 
Noting that $\EE[\sigma_{i}' \mid \bolds] = \psi'( m_i + c_i)$, a one term Taylor's series gives
	\begin{align*}
		\EE (T_n^*(\bs) - T_n^*(\bs') \mid \bs) &= \frac{1}{n} \sumin q_i(\sigma_i - \psi'( m_i + c_i)) 
		\\ &\approx \frac{1}{n} \sumin q_i \left(\sigma_i - u_i - (m_i - s_i)\psi''( s_i + c_i) \right)\\
        =& \frac{T_n^*(\bs)}{n} - \frac{1}{n} \sumin q_i \psi''( s_i + c_i) \Big(\sumjn \A_n(i,j)(\sigma_j - u_j)\Big) 
        \end{align*}
        which is a good approximation, if $m_i\approx s_i$ (this is argued in \cref{lem:m-n contraction}).
        Provided $s_i\approx 0$ (which we argue in \cref{lem:optimizer u l2 concentration}), the RHS above can be approximated as
        \begin{align*}
	 \frac{T_n^*(\bs)}{n} - \frac{1}{n} \sumjn \Big(\sumin \A_n(i,j) q_i \psi''(c_i)\Big) (\sigma_j - u_j) \approx\frac{(1 -  \lambda_n \upsilon_n) T_n^*(\bs)}{n}.
	\end{align*}
	Here, the last line follows on noting that for $(c_1,\cdots,c_n)$ i.i.d., we have the concentration
    \begin{align}\label{eq:sketch pf approximation}
        \sumin  \A_n(i,j) q_i \psi''(c_i) \approx \sumin \A_n(i,j) q_i \EE \psi''(c_1) = \lambda_n q_j \EE \psi''(c_1),
    \end{align}
    where the last equality uses the eigenpair assumption $\A_n \bq = \lambda_n \bq$. As a proxy for $\EE \psi''(c_1)$, we use the quantity $\upsilon_n=\sum_{i=1}^nq_i^2 \psi''(c_i)$, which turns out to be more efficient for controlling associated error terms. Now, the above implies that the Stein's equation holds approximately. The final KS bound in the RHS of \eqref{eq:ksbd} results from the various approximation errors in the above expansion.
    \\

Finally, \cref{thm:CLT for u}, which gives a more tractable form for the centering, follows from  Taylor expansion of the Mean-Field optimizers $\bu$ and simplifying the expression for $\bq^\top \bu$ using the approximate eigenpair condition. To begin, use a one term
Taylor expansion to get
    \begin{align*}
        u_i = \psi'(s_i+c_i) \approx \psi'(c_i) +  s_i \psi''(c_i),
    \end{align*}
    which gives $\bq^\top \bu$ as
    \begin{align*}
        \bq^\top \bu &\approx \bq^\top \Psi'(\bc) + \sumin s_i q_i \psi''(c_i) = \bq^\top \Psi'(\bc) +  \sumij\A_n(i,j) u_j q_i \psi''(c_i) \approx \bq^\top \Psi'(\bc) + \upsilon_n \lambda_n \sumjn u_j  q_j,
    \end{align*}
    where
  the last line above again uses concentration followed by the eigenpair assumption of \eqref{eq:sketch pf approximation}. 
    Now, the claim $\bq^\top \bu \approx \frac{\bq^\top \Psi'(\bc)}{1 -  \lambda_n \upsilon_n}$
    follows by adjusting the terms.
\\

Even though our main application is the random field Ising model with i.i.d. magnetic fields, our main results are stated for general inhomogeneous fields, deterministic or random. Allowing this broad generality comes at a cost, as now it is difficult to pinpoint the geometry of the space of Mean-Field optimizers $\bu$. The MHT assumption comes to the rescue, forcing the uniqueness of optimizer, and also giving concentration for various statistics, which are helpful to verify the CLT, but also of possible independent interest. We believe that using the tools developed in this paper, one can study concentration and CLT of linear statistics under a broader class off magnetic fields ${\bf c}$ (such as dependent coordinates, which comes up while analyzing the posterior of a high dimensional linear regression model).

\subsection{Random field Ising models}\label{sec:isingrandom}

  Recall the RFIM introduced in \cref{ex:rfim}. In this section we assume that the base measures $\mu_i=\mu$ for all $i\in [n]$, where the common measure $\mu$ is  supported on $[-1,1]$. Consequently, we omit the subscript $i$ in the $\psi_i$s (see \cref{def:expfam}).
  We also assume that $\bc = (c_1, \ldots, c_n)^\top$ are i.i.d. random variables from a distribution $F$, where $F$ is a distribution on reals. Throughout this section \red{let $\EE_F$ denote the expectation with respect to the random field $\bc$}, and assume that $\EE_F \psi'(c_1)=0$. In particular this assumption holds if both the measures $\mu$ and $F$ are symmetric. Thus, this can be viewed as a generalization of the constant field Ising model where $F$ is degenerate at $0$ (see \cite{Deb2024detecting,Reza2018}).  
  Recall the definitions of $\alpha_n$ and $\psi(\cdot)$ from \eqref{eq:rowcontrol} and \cref{def:expfam} respectively.
  In our next result, we provide \emph{both quenched (conditional on $\bc$) and annealed (unconditional) central limit theorems} for the linear statistic $T_n$ after appropriate centering.

    \begin{thm}\label{cor:i.i.d. clt}
    Assume ${\bf A}_n$ satisfies the MHT condition \eqref{eq:mht}. Set $\upsilon := \EE_F \psi''(c_1) > 0$, and define 
    $$\textsc{Err}(\A_n, \bq) := \sqrt{\an} + \sqrt{n} \an + \|\bm \epsilon\| + \|\bq\|_\infty.$$
    \begin{enumerate}[(a)]
        \item Then we have
        \begin{align*}\
            d_{KS}\Big(\sumin q_i(\sigma_i - u_i), W_\infty\mid \bc\Big) = O_P\Big( \textsc{Err}(\A_n, \bq) \Big),\quad \text{where} \quad W_\infty\sim \mcn\left(0, \frac{\upsilon}{1 -  \lambda \upsilon }  \right).
        \end{align*}
        
        \item Also we have
        \begin{align}
            d_{KS}\left(\sumin q_i\left(\sigma_i - \frac{\psi'(c_i)}{1-\lambda \upsilon}\right), W_\infty \mid \bc\right)&= O_P\Big(\textsc{Err}(\A_n, \bq) + \sqrt{n \an} \|\bm \epsilon\|\Big),\notag \\
           d_{KS}\left(\sumin q_i \sigma_i, W_\infty+\widetilde{W}_\infty\right) &= O\Big(\textsc{Err}(\A_n, \bq) + \sqrt{n \an} \|\bm \epsilon\|\Big),\label{eq:rfim clt centered}
        \end{align}
        where $W_\infty$ and $\widetilde{W}_\infty\sim \mcn\left(0,  \frac{\EE_{F} [\psi'(c_1)^2]}{(1 - \lambda \upsilon)^2} \right) $ are independent. 
    \end{enumerate}
  For both parts (a) and (b), the implicit constant depends only on $\rho$ from \eqref{eq:mht} and $\upsilon$.
    \end{thm}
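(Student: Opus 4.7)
The plan is to apply Theorems \ref{thm:clt} and \ref{thm:CLT for u} \emph{conditionally on the random field} $\bc$, taking $\lambda_n := \lambda$ throughout, and then convert the now-random quantities $R_{1n}, R_{2n}, R_{3n}, R_{4n}$ and $\upsilon_n$ into $O_P$-bounds under the i.i.d.\ product law of $\bc$. The assumptions $\mathrm{supp}(\mu)\subset[-1,1]$ and $\EE_F\psi'(c_1)=0$ ensure that $\psi'(c_i)$ and $\psi''(c_i)-\upsilon$ are bounded and centered, so each $R_{\ell n}$ is a polynomial form in independent, bounded, centered variables.

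\textbf{Step 1 (control of the random error terms).} Because the diagonal of $\A_n$ vanishes, $\nu_i=\sum_j\A_n(i,j)\psi'(c_j)$ is a sum of independent centered bounded variables with $\EE_\bc\nu_i^2\lesssim\alpha_n$ and $\EE_\bc\nu_i^4\lesssim\alpha_n^2$; summing over $i$ gives $R_{2n}=O_P(n\alpha_n)$ and $R_{3n}=O_P(n\alpha_n^2)$. Expanding the square in the definition of $R_{1n}$ and using $\|\bq\|=1$ together with $|\upsilon_n-\upsilon|=O_P(\|\bq\|_\infty)$ (since $\mathrm{Var}_\bc(\upsilon_n)\le\|\bq\|_\infty^2$) yields $R_{1n}=O_P(\alpha_n+\|\bq\|_\infty^2)$. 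For $R_{4n}=\bigl|\sum_i q_i(\psi''(c_i)-\upsilon_n)\nu_i\bigr|$ one exploits the independence of $\psi''(c_i)-\upsilon$ from $\nu_i$ (again because the diagonal of $\A_n$ is zero); the surviving cross-correlations across different $i$'s are controlled by $\sum_{i,k}q_iq_k\A_n^2(i,k)\le\alpha_n$, giving $R_{4n}=O_P(\sqrt{\alpha_n}+\|\bq\|_\infty)$.

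\textbf{Step 2 (quenched CLTs, part (a) and first half of (b)).} Substituting these bounds into Theorem \ref{thm:clt} and dividing by $\upsilon_n=\upsilon+O_P(\|\bq\|_\infty)$, which stays bounded away from $0$ because $\upsilon>0$, each term in the RHS of \eqref{eq:ksbd} is $O_P(\textsc{Err}(\A_n,\bq))$. Replacing the target variance $\upsilon_n/(1-\lambda\upsilon_n)$ by $\upsilon/(1-\lambda\upsilon)$ costs an additional $O_P(\|\bq\|_\infty)$ via the standard bound $d_{KS}(\mcn(0,\sigma_1^2),\mcn(0,\sigma_2^2))\lesssim|\sigma_1^2-\sigma_2^2|$, establishing part~(a). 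The first assertion of (b) follows identically from Theorem \ref{thm:CLT for u}: the additional terms $\sqrt{R_{1n}R_{2n}}$, $\sqrt{R_{2n}}\|\bm\epsilon\|$, $R_{4n}$ and $|\bm\epsilon^\top\Psi'(\bc)|=O_P(\|\bm\epsilon\|)$ collapse into $\textsc{Err}(\A_n,\bq)+\sqrt{n\alpha_n}\|\bm\epsilon\|$, while the centering shift from $\psi'(c_i)/(1-\lambda_n\upsilon_n)$ to $\psi'(c_i)/(1-\lambda\upsilon)$ contributes only $O_P(\|\bq\|_\infty)$ since $|\sum_i q_i\psi'(c_i)|=O_P(1)$.

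\textbf{Step 3 (annealed CLT).} Set $g(\bc):=(1-\lambda\upsilon)^{-1}\sum_i q_i\psi'(c_i)$ and take $W_\infty$ independent of $\bc$. The quenched half of (b) says the conditional law of $\sum_i q_i\sigma_i$ is KS-close to that of $W_\infty+g(\bc)$ at rate $O_P(\textsc{Err}(\A_n,\bq)+\sqrt{n\alpha_n}\|\bm\epsilon\|)$; taking expectation over $\bc$ (using the moment bounds from Step~1 and that KS is bounded by $1$) yields an unconditional bound at the same rate between $\sum_i q_i\sigma_i$ and $W_\infty+g(\bc)$. Classical Berry--Esseen for the i.i.d.\ bounded centered sum $g(\bc)$, with $\sum_i|q_i|^3\le\|\bq\|_\infty$, gives $d_{KS}(g(\bc),\widetilde{W}_\infty)=O(\|\bq\|_\infty)$; since convolution with the independent Gaussian $W_\infty$ is a KS-contraction, $d_{KS}(W_\infty+g(\bc),W_\infty+\widetilde{W}_\infty)=O(\|\bq\|_\infty)$, and the triangle inequality delivers \eqref{eq:rfim clt centered}. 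The main technical obstacle is the moment analysis of $R_{4n}$, whose summands are neither independent across $i$ nor a clean martingale-difference sequence, requiring an explicit enumeration of which cross-correlation patterns survive the mean-zero hypotheses on $\psi'$ and $\psi''-\upsilon$ and a verification that they sum to at most $O(\alpha_n)$ rather than something larger.
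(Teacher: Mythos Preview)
Your proposal is correct and follows essentially the same route as the paper: moment bounds on $R_{1n},\ldots,R_{4n}$ and $\upsilon_n-\upsilon$ under the i.i.d.\ law of $\bc$, substitution into Theorems~\ref{thm:clt} and~\ref{thm:CLT for u} on the high-probability event $\upsilon_n\ge\upsilon/2$, and for the annealed statement expectation over $\bc$ combined with Berry--Esseen for $\sum_i q_i\psi'(c_i)$. The minor cosmetic differences (your $R_{1n}=O_P(\alpha_n+\|\bq\|_\infty^2)$ versus the paper's $O(\alpha_n+\|\bm\epsilon\|^2)$, and your ``convolution with an independent variable is a KS-contraction'' versus the paper's explicit independent coupling of $W_\infty$ and $\widetilde W_\infty$) do not affect the substance.
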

In particular, for the usual Ising model with binary spins, one takes $\mu$ to be supported on $\pm 1$ with $\mu(\pm 1) = \frac{1}{2}$, which gives $\psi'(c) = \tanh(c)$ and $\psi''(c)= \sech^2(c)$. 
    The annealed limiting normal distribution in \eqref{eq:rfim clt centered} (without any centering) might, at first glance, seem unfamiliar given the nature of our main theorems earlier. However, note that, due to the independence of  $\{c_i\}_{i\in  [n]}$, the sum $\sum_{i=1} q_i \psi_i'(c_i)$ converges to a centered normal distribution by a standard Lindeberg-Feller argument. Combining this observation with \cref{thm:CLT for u} yields the required annealed limit. This also highlights the benefits of refining the centering in \cref{thm:CLT for u} relative to \cref{thm:clt}, albeit under slightly stronger assumptions.

    \begin{remark}[Eigenvalue-eigenvector]
        \red{Continuing from Remarks \ref{rem:eigen} and \ref{rem:eigen2}, we need an approximate eigenvalue $\lambda$ (free of $n$) and a sequence of approximately delocalized eigenvectors $\bq \in \mathbb{R}^n$, i.e.
  \begin{equation}\label{eq:standassn}
  \lVert \bq \rVert = 1,\quad \lVert \A_n \bq - \lambda \bq \rVert \to 0, \quad \lVert \bq \rVert_{\infty} \to 0
  \end{equation}
  to get a CLT (i.e., for the bounds in \cref{cor:i.i.d. clt} to converge to 0).
  In part (a), the RHS is $o_P(1)$ when \eqref{eq:standassn} holds and $\alpha_n \ll \frac{1}{\sqrt{n}}$. Similarly, in part (b), the RHS is $o_P(1)$ and $o(1)$ respectively, under \eqref{eq:standassn}, $\alpha_n \ll \frac{1}{\sqrt{n}}$, and $\lVert \bm\epsilon \rVert = o\left(\frac{1}{\sqrt{n \an}}\right)$.
  We will provide specific choices of $\lambda$ and $\bq$ that satisfy \eqref{eq:standassn} in the examples that follow. }
    \end{remark}
    
    \begin{remark}[Tightness of the Berry-Esseen bound]             
        To see the tightness of our bounds, take  $\bq = n^{-1/2} \mathbf{1}$ and $\A_n$ to be the adjacency matrix of a complete  graph scaled by its degree $n-1$. In other words, consider the sample mean under the Curie-Weiss model. Then, we have $\alpha_n \lesssim \frac{1}{n}$. Consequently, it follows from \cref{cor:i.i.d. clt} that
        $$d_{KS}\left( n^{-1/2}\sumin (\sigma_i-u_i)\,,\, W_\infty \mid \bc \right) = O_P(n^{-1/2})\, ,\quad d_{KS}\left(\sumin q_i \sigma_i\,,\, W_\infty+\widetilde{W}_\infty \right) = O(n^{-1/2}).$$
        The $n^{-1/2}$ rate matches the well-known Berry-Esseen rate for the sample mean of i.i.d. random variables \citep{esseen1942liapunov}, so our rates are tight in this example. Also, if $\alpha_n\asymp n^{-1/2}$, it is known that the above CLT fails for the choice $c_i=c$ for all $i\in [n]$ as mentioned in \cref{rmk:necessity of smf}.
        In general, the rate gets slower as $\alpha_n$ and $\lVert \bq \rVert_\infty$ increase.
    \end{remark}
  
Now, we provide concrete choices of the coupling matrix $\A_n$ including (1) adjacency matrices for various simple graphs, (2) the Hopfield model. For each example, we show how \cref{cor:i.i.d. clt} can be applied to derive limiting distributions of $T_n$. When the coupling matrix $\A_n$ is random (for example, for random graphs or the Hopfield model), the following statements will always condition on the randomness of $\A_n$.

\begin{ex}[Erd\H{o}s-R\'{e}nyi graphs]
    Let $\G_n \sim ER(n,p_n)$ be the Erd\H{o}s-Rényi random graph with $n$ vertices and edge probability $p_n$, and let $\A_n := \frac{\theta \G_n}{n p_n}$. Here, $\theta$ is often-called the inverse-temperature parameter in the statistical physics literature.  It is easy to check that both the matrix norms $\lVert\A_n\rVert$ and  $\lVert \A_n\rVert_{\infty}$ converge in probability to $ |\theta|$. Then the following result is an immediate application of \cref{cor:i.i.d. clt}, where all results are conditional on the random graph $\G_n$.
\end{ex}
    
    \begin{cor}\label{lem:ER}
  Let $\bolds$ be an observation from the model~\eqref{eq:model}, where the matrix ${\bf A}_n$ is obtained from the Erd\H{o}s-R\'{e}nyi graph as described above. Assume further that $p_n\gg n^{-1/2}$ and $|\theta|<1$. 

    \vspace{0.02in}
    
    \noindent (a). \emph{(Sample mean)}. Then we have
    \begin{eqnarray*}
    \frac{1}{\sqrt{n}}\sum_{i=1}^n (\sigma_i-u_i)\mid \bc &\overset{d}{\longrightarrow}& \mcn\left(0,\frac{\EE_F \psi''(c_1)}{1-\theta \EE_F \psi''(c_1)}\right), \\
    \frac{1}{\sqrt{n}}\sum_{i=1}^n \sigma_i &\overset{d}{\longrightarrow}& \mcn\left(0,\frac{\EE_F \psi''(c_1)}{1-\theta \EE_F \psi''(c_1)}+\frac{\EE_F [\psi'(c_1)^2]}{(1-\theta \EE_F \psi''(c_1) )^2}\right),
    \end{eqnarray*}

    \vspace{0.02in}

    \noindent (b). \emph{(Contrasts)}. If $\bq$ satisfies
\begin{equation}\label{eq:contrastcon}
    \lVert \bq\rVert=1, \qquad \sum_{i=1}^n q_i = o(\sqrt{n p_n}), \qquad \lVert \bq \rVert_{\infty} \to 0,
    \end{equation}
then we have
    \begin{eqnarray*}
    \sum_{i=1}^n q_i(\sigma_i-u_i)\mid \bc &\overset{d}{\longrightarrow} &\mcn\left(0,\EE_F \psi''(c_1)\right), \\ \sum_{i=1}^n q_i \sigma_i &\overset{d}{\longrightarrow} &\mcn\left(0,\EE_F \psi''(c_1)+\EE_F [\psi'(c_1)^2]\right).
    \end{eqnarray*}
    \end{cor}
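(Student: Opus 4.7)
The strategy is to reduce \cref{lem:ER} to a direct application of \cref{cor:i.i.d. clt}, so the task is to verify, conditional on $\G_n$, the following hypotheses with high probability: (i) the MHT condition $\lVert \A_n\rVert_4\le \rho$ for some fixed $\rho<1$, (ii) the SMF decay $\alpha_n \ll n^{-1/2}$, and (iii) the existence of a delocalized approximate eigenvector, i.e.\ $\lVert \A_n\bq-\lambda \bq\rVert\to 0$ and $\lVert \bq\rVert_\infty\to 0$, with the appropriate $(\lambda,\bq)$ for parts (a) and (b). For the annealed CLT in part (b) we additionally need $\lVert \boldsymbol{\epsilon}\rVert=o(1/\sqrt{n\an})$. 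Once these are in place, the claimed Gaussian limits with the stated variances follow by plugging $\lambda=\theta$ (for the sample mean) and $\lambda=0$ (for contrasts) into the limiting variance $\upsilon/(1-\lambda\upsilon)$ and the annealed correction $\EE_F[\psi'(c_1)^2]/(1-\lambda\upsilon)^2$ from \cref{cor:i.i.d. clt}.

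Condition (i) is essentially free: by \cref{lem:normorder}, $\lVert \A_n\rVert_4\le \lVert \A_n\rVert_\infty$, and the stated fact that $\lVert \A_n\rVert_\infty \xp |\theta|<1$ gives MHT with high probability. For (ii), since $G_n(i,j)\in\{0,1\}$, we have $\sum_j \A_n^2(i,j)=(\theta/np_n)^2 d_i$, where $d_i$ is the degree of vertex $i$ in $\G_n$. A standard Bernstein/Chernoff argument shows $\max_i d_i = np_n(1+o_P(1))$ whenever $np_n\to\infty$, whence $\an = O_P(1/(np_n))$. The condition $p_n\gg n^{-1/2}$ then gives $\an\ll n^{-1/2}$ and $\sqrt{n\an}\lesssim 1/\sqrt{p_n}$, both in probability.

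The main technical step is verifying the approximate eigenvector condition. For part (a) with $\bq=n^{-1/2}\mathbf{1}$ and $\lambda=\theta$, one computes directly
$$ (\A_n\bq-\theta\bq)_i = \frac{\theta}{n p_n\sqrt{n}}\,(d_i-np_n), \qquad \EE\lVert \A_n\bq-\theta\bq\rVert^2 \lesssim \frac{\theta^2}{np_n}\to 0,$$
where the expectation is with respect to $\G_n$ and we used $\Var(d_i)\le np_n$. For part (b) take $\lambda=0$; the key calculation is
$$\EE\lVert \A_n\bq\rVert^2 = \Big(\frac{\theta}{np_n}\Big)^2 \sum_{i=1}^n \EE\Big[\Big(\sum_{j=1}^n G_n(i,j) q_j\Big)^2\Big]\lesssim \frac{\theta^2 (\sum_j q_j)^2}{n} + \frac{\theta^2}{np_n},$$
by splitting into squared mean and variance of the Bernoulli sum and using $\lVert \bq\rVert=1$. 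Under \eqref{eq:contrastcon}, $(\sum_j q_j)^2/n = o(p_n)\to 0$ and $1/(np_n)\to 0$, giving $\lVert \boldsymbol{\epsilon}\rVert=o_P(1)$. For the annealed CLT we further need $\lVert\boldsymbol{\epsilon}\rVert^2 = o(p_n)$ (since $n\an\asymp 1/p_n$); the same display yields exactly this, because $(\sum_j q_j)^2/n=o(p_n)$ by \eqref{eq:contrastcon} and $1/(np_n)=o(p_n)$ iff $p_n\gg n^{-1/2}$, which is assumed.

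The main obstacle is the eigenvector bound for contrasts: one must isolate the contribution of $(\sum_j q_j)^2$ (controlled by the hypothesis $\sum_j q_j=o(\sqrt{np_n})$) from the fluctuation contribution (controlled by $1/(np_n)$). Once these three steps are assembled, \cref{cor:i.i.d. clt}(a) delivers the quenched CLT for $\sum_i q_i(\sigma_i-u_i)$ with variance $\upsilon/(1-\lambda\upsilon)$, and \cref{cor:i.i.d. clt}(b) delivers the annealed CLT for $\sum_i q_i\sigma_i$ with the convolved variance, since the residual error term $\sqrt{n\an}\lVert\boldsymbol{\epsilon}\rVert$ is also $o(1)$ under our bounds. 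Substituting $\lambda=\theta$ in part (a) and $\lambda=0$ in part (b) yields the explicit variance formulas in the statement.
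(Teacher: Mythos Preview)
Your proposal is correct and follows essentially the same route as the paper's proof: verify the high-temperature condition via $\lVert \A_n\rVert_\infty\xrightarrow{P}|\theta|$, bound $\alpha_n=O_P(1/(np_n))$ from the degrees, and check the approximate eigenvector condition for $(\bq,\lambda)=(n^{-1/2}\mathbf{1},\theta)$ in part~(a) and $(\bq,0)$ in part~(b) by splitting $\EE\lVert \A_n\bq-\lambda\bq\rVert^2$ into a mean contribution controlled by $(\sum_j q_j)^2/n$ and a variance contribution of order $1/(np_n)$, then invoke \cref{cor:i.i.d. clt}. Your computation of $\sqrt{n\alpha_n}\lVert\boldsymbol{\epsilon}\rVert\to 0$ under $p_n\gg n^{-1/2}$ and \eqref{eq:contrastcon} matches the paper's exactly; the paper writes ``$\lambda=1$'' in part~(a), but this is evidently a typo since the subsequent formula $(\A_n\bq-\lambda\bq)_i=\frac{\theta}{\sqrt{n}}\big(\frac{d_i}{np_n}-1\big)$ and the limiting variance both require $\lambda=\theta$, as you correctly use.
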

Note that the regime $|\theta|<1$ covered in the above result allows for both the ferromagnetic regime $\theta\in (0,1)$ and the anti-ferromagnetic regime $\theta\in (-1,0)$.

\begin{remark}[Universal limit for contrast vectors]
    An interesting feature of \cref{lem:ER}(b) is that if ${\bm q}$ satisfies \eqref{eq:contrastcon} (i.e.~it is approximately a contrast), the limit distribution is universal, i.e., it does not depend on $\bq$. This is because, for all $\bq$ satisfying \eqref{eq:contrastcon}, $(\bq,0)$ forms an approximate eigenvector-eigenvalue pair in the sense of \cref{cor:i.i.d. clt}. We will now use this observation to show joint convergence for multiple contrasts.
\end{remark}
\begin{defi}
For two functions $h_1,h_2\in L^2\big((0,1]\big)$, define their inner product as
$\langle h_1,h_2\rangle:=\int_0^1 h_1(x)h_2(x)dx$.
\\

    \noindent For any vector ${\bf x}=(x_1,\cdots,x_n)$, define a piecewise constant function $g_{\bf x}:(0,1]\mapsto \R$ given by
    $$g_{\bf x}(t)=x_i\qquad\text{ if }\qquad\frac{i-1}{n}<t\le \frac{i}{n}, 1\le i\le n.$$
\end{defi}

\begin{cor}[Multivariate convergence for contrasts]\label{cor:ER multidimensional}
    Suppose we are in the set up of \cref{lem:ER}. %
    Let
    $$\mathcal{Q} \in \red{L^2\big((0,1]\big)}\quad \text{s.t.} \quad \int_{0}^1 Q(x) dx = 0, \quad \int_{0}^1 Q(x)^2 dx = 1\Big\}$$ %
    be the collection of contrast functions. 
   Let $S_\infty, \tilde{S}_\infty:\mathcal{Q} \to \mathbb{R}$ be Gaussian processes with mean 0 and covariance functions $$K(Q_1, Q_2) := [\EE_F \psi''(c_1)] \langle Q_1, Q_2\rangle, \quad \tilde{K}(Q_1, Q_2) := \Big[\EE_F \psi''(c_1) + [\EE_F \psi'(c_1)]^2\Big] \langle Q_1, Q_2\rangle,$$ respectively. 
    
    \begin{enumerate}
        \item[(a)]Then, conditional on $\bc$, the finite dimensional marginals of the process $\Big\{\sqrt{n}\langle Q,g_{\bs}-g_{\bf u}\rangle,Q\in \mathcal{Q}\Big\}$ converge to the finite dimensional marginals of the Gaussian process $\{S_\infty(Q),Q\in \mathcal{Q}\}$.

    \item[(b)]Unconditionally, the finite dimensional marginals of the process $\{\sqrt{n}\langle Q, g_{\bs} \rangle,Q\in \mathcal{Q}\}$ 
    converge to finite dimensional marginals of the Gaussian process $\{\tilde{S}_\infty(Q),Q\in \mathcal{Q}\}$.

    \end{enumerate}
\end{cor}

\begin{ex}[$d(n)$-regular graph]
    Let $\G_n$ denote the adjacency matrix of a $d(n)$-regular graph \emph{(deterministic or random)}, and set $\A_n:=\frac{\theta \G_n}{d(n)}$ for some $\theta\in\R$.  It is easy to check that $\lVert\A_n\rVert = \lVert \A_n\rVert_4 = \lVert \A_n\rVert_{\infty} = |\theta|$. 
Again the following result is an immediate application of \cref{cor:i.i.d. clt}.
    \begin{cor}\label{cor:reg}
    Let $\bolds$ be an observation from the model~\eqref{eq:model}, where the matrix $\A_n$ is obtained from a $d(n)$-regular graph as described above. Assume further that $d(n)\gg \red{n^{1/2}}$ and $|\theta|<1$. 

    \vspace{0.02in}

    \noindent\emph{(a).} (Sample mean). Then both conclusions of part (a) in \cref{lem:ER} hold.

    \vspace{0.02in}

    \noindent\emph{(b).} (Contrasts). Suppose further that \eqref{eq:contrastcon} holds with $np_n$ replaced by $d(n)$, and
    $\A_n$ satisfies
    \begin{align}\label{eq:expander}\lVert \A_n - n^{-1}\mathbf{1}\mathbf{1}^{\top}\rVert \lesssim d(n)^{-1/2}.\end{align}
    Then both conclusions of part (b) in \cref{lem:ER} hold. %
    \end{cor}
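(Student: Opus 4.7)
The plan is to verify the hypotheses of \cref{cor:i.i.d. clt} for appropriate choices of $(\lambda, \bq)$ in each of the two parts. First I would note that by $d(n)$-regularity, every row of $\A_n = \theta \G_n / d(n)$ has exactly $d(n)$ nonzero entries each equal to $\theta/d(n)$, so $\lVert \A_n \rVert_\infty = |\theta| < 1$ and the MHT condition \eqref{eq:mht} follows from \cref{lem:normorder}. A direct computation also gives $\alpha_n = d(n) \cdot (\theta/d(n))^2 = \theta^2/d(n)$, so the Strong Mean-Field condition $\sqrt{n}\alpha_n \to 0$ is equivalent to $d(n) \gg \sqrt{n}$, which is the intended hypothesis of the corollary.

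For part (a), I would take $\bq = n^{-1/2}\mathbf{1}$ and $\lambda = \theta$. The $d(n)$-regularity yields $\G_n \mathbf{1} = d(n)\,\mathbf{1}$, so $\A_n \bq = \theta \bq$ \emph{exactly}, giving $\bm\epsilon = \mathbf{0}$. Combined with $\lVert \bq \rVert_\infty = n^{-1/2} \to 0$, this verifies \eqref{eq:standassn} trivially, and invoking parts (a) and (b) of \cref{cor:i.i.d. clt} with $\upsilon = \EE_F \psi''(c_1)$ immediately yields both the quenched and annealed central limit theorems claimed in \cref{lem:ER}(a).

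For part (b), I would take $\lambda = 0$, so $\bm\epsilon = \A_n \bq$, and the central step is to bound $\lVert \A_n \bq \rVert$ via the expander-mixing-type condition \eqref{eq:expander}. Writing $\A_n \bq = n^{-1}\mathbf{1}\mathbf{1}^\top \bq + (\A_n - n^{-1}\mathbf{1}\mathbf{1}^\top)\bq$ and applying the triangle inequality together with $\lVert \bq \rVert = 1$, I obtain
\begin{equation*}
\lVert \A_n \bq \rVert \le \frac{\left| \sum_{i=1}^n q_i \right|}{\sqrt{n}} + \lVert \A_n - n^{-1}\mathbf{1}\mathbf{1}^\top \rVert \lesssim \frac{\left| \sum_{i=1}^n q_i \right|}{\sqrt{n}} + \frac{1}{\sqrt{d(n)}}.
\end{equation*}
Under the contrast condition $\left|\sum_i q_i\right| = o(\sqrt{d(n)})$ together with $d(n) \gg \sqrt{n}$, both terms on the right are $o(\sqrt{d(n)/n})$; since $1/\sqrt{n\alpha_n} \asymp \sqrt{d(n)/n}$, this simultaneously establishes $\lVert \bm\epsilon \rVert \to 0$ and the finer quantitative bound $\sqrt{n\alpha_n}\,\lVert \bm\epsilon \rVert \to 0$ required by part (b) of \cref{cor:i.i.d. clt}. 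With $\lVert \bq \rVert_\infty \to 0$ by assumption, all hypotheses are verified, and invoking \cref{cor:i.i.d. clt} with $\lambda = 0$ produces the conclusions of \cref{lem:ER}(b).

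The main substantive step is the spectral bookkeeping in the third paragraph: the contrast constraint $\sum_i q_i = o(\sqrt{d(n)})$ is precisely calibrated so that the rank-one ``trivial'' component $n^{-1}\mathbf{1}\mathbf{1}^\top$ of $\A_n$ does not obstruct $\bq$ from lying approximately in the non-top spectral subspace, while \eqref{eq:expander} controls the contribution from the remaining eigenvectors. Everything else reduces to mechanical verification of the hypotheses in \cref{cor:i.i.d. clt}.
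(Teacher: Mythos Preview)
Your proposal is correct and follows essentially the same route as the paper's proof: verify the SHT/MHT condition via $\lVert \A_n\rVert_\infty = |\theta|$, compute $\alpha_n \asymp d(n)^{-1}$, take $(\bq,\lambda)=(n^{-1/2}\mathbf{1},\theta)$ as an exact eigenpair for part (a), and for part (b) take $\lambda=0$ and bound $\lVert \A_n\bq\rVert$ by splitting off the rank-one piece $n^{-1}\mathbf{1}\mathbf{1}^\top$ and invoking \eqref{eq:expander}. Your choice $\lambda=\theta$ in part (a) is in fact the correct eigenvalue (matching the variance $\upsilon/(1-\theta\upsilon)$ in \cref{lem:ER}); the paper writes $\lambda_n=1$ there, which appears to be a typo.
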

\end{ex}
We note that \eqref{eq:expander} is an expander type condition, which is satisfied by several regular graph ensembles of common interest, such as
random regular graphs and Ramanujan graphs. Also, by repeating the argument in \cref{cor:ER multidimensional}, one can get joint convergence of multiple contrast vectors.
\\

In the above two examples we consider graphs/matrices which are approximately regular. Our next result shows that we can handle irregular graphs as well.

\begin{ex}[$f$-random graphs \cite{borgs2018L2,borgs2019L1}]
    Let $f:[0,1]^2 \to [0,1]$ be a symmetric bounded measurable function, and let $Q:[0,1] \to \R$ be a normalized eigenfunction with eigenvalue $\lambda$, i.e. $$\int_0^1 Q(y)^2 dy = 1,\qquad
\int_0^1 f(x,y) Q(y) dy = \lambda Q(x), \quad \forall x \in [0,1].$$ %
    For $\red{\mathbf{U}} := (U_1, \ldots, U_n)^\top \stackrel{i.i.d}{\sim} \textnormal{Unif}[0,1]$ and $0 \le \gamma < 1/2$, consider an adjacency matrix
    $$\G_n(i,j) \stackrel{ind}{\sim} \textnormal{Ber}\left(\frac{f(U_i, U_j)}{n^\gamma}\right), \quad \forall i < j,$$
    and define $\A_n := \frac{\theta \G_n}{n^{1-\gamma}}$. Note that the random graph $\G_n$ is dense if $\gamma = 0$, and the above scaling allows sparser graphs as well. Also, note that $f$ is not necessarily continuous, thus allowing graphs with block structures such as stochastic block models or complete bipartite graphs.
    The following corollary shows that our CLTs still hold for such cases.
\end{ex}

\begin{cor}\label{cor:graphon}
Let $\bolds$ be an observation from the model~\eqref{eq:model}, where the matrix $\A_n$ is the $f$-random graph from the above example. Assume further that $\EE f(U_1, U_2) > 0$ and $|\theta|<1$.
Then, the following holds: 
\begin{align*}
    \frac{1}{\sqrt{n}}\sumin Q(U_i)(\sigma_i - u_i) \mid \bc &\xd \mcn\Big(0, \frac{\EE_F \psi''(c_1)}{1- \theta\lambda \EE_F \psi''(c_1)}\Big) \\
    \frac{1}{\sqrt{n}}\sumin Q(U_i) \sigma_i &\xd \mcn\Big(0, \frac{\EE_F \psi''(c_1)}{1-\theta\lambda \EE_F \psi''(c_1)} + \frac{\EE_F[\psi'(c_1)^2]}{(1-\theta\lambda \EE_F \psi''(c_1))^2} \Big).
\end{align*}
\end{cor}

In our final example, we give a spin-glass model, where the entries of $\A_n$ are both positive and negative. For simplicity, we only consider the case when the base measure $\mu_i=\mu$ puts equal mass on $\pm 1$.

\begin{ex}[Diluted Hopfield model/covariance matrix]\label{ex:Hopfield clt}
Suppose the base measure $\mu_i\equiv \mu$ in \eqref{eq:model} is the Rademacher distribution. Let $\{Z_{k,i}\}_{k\in [N],i\in [n]}$ be independent mean $0$ and variance $1$  sub-Gaussian random variables. Define a symmetric random matrix ${\bf M}_n$ %
by setting $${\bf M}_n(i,j):=\frac{1}{N}\sum_{k=1}^N Z_{k,i}Z_{k,j}.$$ %
Let $\G_n$ be the $n \times n$ adjacency matrix of a simple graph (with $\G_n(i,i) = 0$), and define the coupling matrix $\A_n$ as the Hadamard product of $\theta\M_n$ and $\G_n$, for some inverse-temperature parameter $\theta \in \R$:
$$\A_n(i,j) := \theta \M_n(i,j) \G_n(i,j).$$
This is the so-called diluted Hopfield model \citep{bovier1992rigorous, bovier1993rigorous} that considers the Hopfield model on graphs. Note that taking $\G_n$ as the adjacency matrix of a complete graph results in usual Hopfield model without any dilution \citep{hopfield1982neural}, with $\A_n(i,j) = \theta \M_n(i,j)$ for $i \neq j$. %
\end{ex}

\begin{cor}\label{lem:hopfield condition checking}
Let $\bs$ be an observation from the model \eqref{eq:model}, where the matrix ${\bf A}_n$ is coming from the above diluted Hopfield model, with $\mu_i$ denoting the symmetric Rademacher distribution. Let $\bq$ be any vector such that $ \lVert \bq\rVert=1$ and $\lVert \bq \rVert_{\infty}\to 0$. Under the asymptotic scaling $N \gg n^{3/2}$ and any inverse-temperature parameter $\theta$ (not depending on $n$), we have: %
    \begin{align*}
             \sumin q_i\left(\sigma_i - u_i\right) \mid \bc \overset{d}{\longrightarrow} \mcn\left(0, \EE_F \sech^2(c_1) \right), \quad \sumin q_i \sigma_i \overset{d}{\longrightarrow} \mcn\left(0,{1} \right).
        \end{align*}    
\end{cor}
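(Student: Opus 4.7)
The approach is to apply \cref{cor:i.i.d. clt} with the choice $\lambda = 0$ and approximate eigenvector $\bq$. Since $\mu$ is the symmetric Rademacher distribution, $\psi'(c) = \tanh(c)$ and $\psi''(c) = \sech^2(c)$; the identity $\sech^2 + \tanh^2 \equiv 1$ then gives $\EE \sech^2(c_1) + \EE[\tanh^2(c_1)] = 1$, which matches the variance of $W_\infty + \widetilde{W}_\infty$. It suffices, therefore, to verify on an event of probability $1-o(1)$ in the disorder $\{Z_{k,i}\}$ the conditions: (i) MHT: $\|\A_n\|_4 \le \rho < 1$; (ii) $(\bq,0)$ is an approximate delocalized eigenpair, i.e.\ \eqref{eq:standassn}; (iii) $\alpha_n \ll n^{-1/2}$; and (iv) $\|\A_n\bq\| = o(1/\sqrt{n\alpha_n})$, which is needed for the annealed limit in part (b).

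The probabilistic input is classical. Writing $\A_n = N^{-1}Z^\top Z - D$, where $D$ is the diagonal of $N^{-1}Z^\top Z$, sub-Gaussian covariance concentration (e.g.\ Theorem 4.7.1 in Vershynin's \emph{High-Dimensional Probability}) gives $\|N^{-1}Z^\top Z - \I_n\| \lesssim \sqrt{n/N}$, while a Bernstein union bound yields $\|D - \I_n\|_\infty \lesssim \sqrt{(\log n)/N}$. Combining these, with probability $1-o(1)$,
\begin{equation*}
\|\A_n\| \lesssim \sqrt{n/N}, \qquad \alpha_n = \max_{1 \le i \le n} \|\A_n \mathbf{e}_i\|^2 \le \|\A_n\|^2 \lesssim n/N.
\end{equation*}

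The delicate step is verifying the MHT condition, since neither $\|\A_n\|$ alone nor $\|\A_n\|_\infty$ alone controls $\|\A_n\|_4$. A row-wise Cauchy-Schwarz bound gives $\|\A_n\|_\infty = \max_i \sum_j |\A_n(i,j)| \le \sqrt{n}\sqrt{\alpha_n} \le \sqrt{n}\,\|\A_n\| \lesssim n/\sqrt{N}$, and Riesz-Thorin interpolation between the $\ell^2$- and $\ell^\infty$-operator norms produces
\begin{equation*}
\|\A_n\|_4 \le \|\A_n\|^{1/2}\|\A_n\|_\infty^{1/2} \le n^{1/4}\|\A_n\| \lesssim \frac{n^{3/4}}{\sqrt{N}} \to 0 \quad \text{under } N \gg n^{3/2},
\end{equation*}
which establishes MHT (with $\|\A_n\|_4$ eventually below any fixed $\rho \in (0,1)$).

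The remaining conditions follow easily: $\|\A_n \bq\| \le \|\A_n\| \lesssim \sqrt{n/N} \to 0$, together with $\|\bq\| = 1$ and $\|\bq\|_\infty \to 0$, gives \eqref{eq:standassn}; the bound $\alpha_n \lesssim n/N \ll n^{-1/2}$ supplies (iii); and $\|\A_n\bq\|\sqrt{n\alpha_n} \lesssim \sqrt{n/N}\cdot n/\sqrt{N} = n^{3/2}/N \to 0$ supplies (iv). Plugging these into \cref{cor:i.i.d. clt} with $\upsilon = \EE\sech^2(c_1)$ yields both claimed limits. The main obstacle is the MHT verification, as highlighted above: converting the operator-norm control from sub-Gaussian concentration into the required $\ell^4$-operator-norm bound via the interpolation step is what pins down the sharp threshold $N \gg n^{3/2}$, which is exactly the balance point between this interpolation bound and the Berry-Esseen error in condition (iv).
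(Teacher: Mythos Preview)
Your proof is correct and reaches the same quantitative conclusions as the paper, but the route to the two random-matrix estimates is genuinely different. The paper establishes a standalone Chevet-type inequality (\cref{lem:L_n norm}) via netting and Hanson--Wright to obtain $\|\A_n\|_r = O_P(n^{1-1/r}/\sqrt{N})$ directly for every $r\in[2,\infty]$, and proves a separate concentration lemma (\cref{lem:alpha_n}) for $\alpha_n$. You instead start from the off-the-shelf covariance concentration $\|N^{-1}Z^\top Z - \I_n\|\lesssim\sqrt{n/N}$, read off $\alpha_n\le\|\A_n\|^2$ trivially, and then recover the MHT bound via Riesz--Thorin interpolation $\|\A_n\|_4\le\|\A_n\|^{1/2}\|\A_n\|_\infty^{1/2}$ together with the Cauchy--Schwarz row bound $\|\A_n\|_\infty\le\sqrt{n\alpha_n}$. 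Both arguments land on $\|\A_n\|_4\lesssim n^{3/4}/\sqrt{N}$ and $\alpha_n\lesssim n/N$, which is exactly what drives the threshold $N\gg n^{3/2}$. Your argument is more elementary---it needs no new concentration inequality---while the paper's lemma is sharper as a freestanding result (it gives the full $r$-scale and may be of independent interest, as the authors note).
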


\begin{remark}\label{rmk:hopfield}
    The above corollary applies to \emph{any vector} $\bq$ which is delocalized, in the sense that $\|\bq\|_\infty\to 0$. 
    We stress that without this condition CLT fails even when $\bs$ is a vector of i.i.d. random variables. Thus, essentially there is no restriction on the vector ${\bf q}$.
    We can also allow for dependence across the entries of the rows of the data matrix ${\bf Z}:=(Z_{k,i})_{k\in [N], i\in [n]}$, at the expense of demanding that $\bq$ is an approximate eigenvector of the corresponding population covariance matrix (after removing the diagonal entries).
\end{remark}

Note that we work under an asymptotic scaling where the matrix ${\bf Z}$ has significantly more rows than columns, as opposed to the proportional asymptotics with $n/N \to \gamma \in (0, \infty).$ %
Also, note that we can allow an arbitrary diluting of our Hopfield model. Indeed, this is because we do not re-scale $\A_n$ by the average degree of the graph $\G_n$, in contrast to what is done in \citep{bovier1992rigorous}, which considers the randomly diluted Hopfield model where $\G_n$ is the Erdős–Rényi random graph with connection probability $p$, and divides each $\A_n(i,j)$ by $p$. Our main results directly apply to this version of the diluted Hopfield model from \citep{bovier1992rigorous}, as long as the Erdős–Rényi probability $p$ is bounded away from $0$ (and more generally to dense graphs). It remains unclear whether our results apply to the diluted model for non-dense graphs, scaled by the average degree. The main challenge in this direction is to verify the MHT condition. \\

We end this section on RFIMs by discussing additional potential examples.

\begin{remark}[Broader applications]\label{rem:appeal}
    We have presented Theorems \ref{thm:LLN}, \ref{thm:clt}, and \ref{thm:CLT for u}, that allow general $\A_n$, $\bc$, $\mu_i$s. In particular, these results do not rely on the assumption that $c_1,\ldots ,c_n$ are i.i.d. Thus, we expect them to be applicable to more general quadratic interaction models. A particularly challenging case is when $\PP$ is the posterior distribution in a linear regression model with Gaussian errors. In that case, the corresponding field vector is neither identically distributed nor does it possess independent coordinates, and the base measures $\mu_i$ are heterogeneous. We plan to pursue this problem in the future.
\end{remark}

\subsection{Discussion}\label{sec:conclusion}
In this paper, we have proposed a LLN and CLT for linear statistics of quadratic interaction models. We have demonstrated our results on the random field Ising models. While there has been a lot of recent focus on CLTs in quadratic interaction models
(see e.g. \cite{deb2020fluctuations,kabluchko2019fluctuations,Kabluchko2022} and the references therein), we would like to point out that we work under a unique set of assumptions that has not been considered. The previous literature on Ising models cited above considered (1) binary spins on $\{-1,1\}$, (2) coupling matrices $\A_n$ which are either approximately regular (complete graphs, Erd\H{o}s-R\'{e}nyi random graphs, etc.), or dense graphs, a (3) constant magnetic field, and (4) established limit theorems for the sample mean. In this paper we allow the random variables to take arbitrary bounded values, with a non-constant (potentially random) external field. Additionally, we do not require $\A_n$ to be either regular or dense, and allow for both positive and negative entries. Finally, we establish limit theorems for linear statistics where the coefficients can be arbitrary eigenvectors. Thus, our results go significantly beyond the existing literature.

Our paper opens many interesting directions for future research. One immediate question is can we extend our results to $\bq$ which is not necessarily an approximate eigenvector of the coupling matrix $\A_n$. As our limiting distribution depends on the corresponding eigenvalue $\lambda\equiv\lambda_n$, it is unclear if such a result would be possible at the generality of this paper. Another related question is to consider multivariate fluctuations of simultaneously many eigenvectors, that is when we replace the vector $\bq \in \mathbb{R}^n$ by a matrix $\mathbf{Q} \in \mathbb{R}^{n \times q}$ for some positive integer $q\ge 2$.

A second direction is to improve the high-temperature condition that we assume. We assume the MHT condition (see \eqref{eq:mht}) for most results, which involves computing the often intractable $\|\cdot\|_4$ operator norm. It would be interesting to replace this with a milder condition, such as the WHT (see \eqref{eq:wht}) condition, or even go beyond the high-temperature regime entirely. 

Another intriguing direction is to prove such CLTs beyond quadratic interaction models. These arise naturally in high-dimensional generalized linear models \cite{McCullagh1989}, Gibbs posteriors \cite{Alquier2016}, and fractional posteriors \cite{Yang2020}. While quadratic interactions seem crucial in our approach, it was recently shown in \cite{mukherjee2024naive} that the posterior under generalized linear models does admit a ``locally" quadratic approximation. We hope to leverage such approximations in combination with our current techniques to go beyond the quadratic interaction setting.
\\

\noindent \textbf{Organization of proofs} \quad %
We prove the main Theorems \ref{thm:LLN}, \ref{thm:clt}, and \ref{thm:CLT for u} in \cref{sec:proof of mainres}. 
In \cref{sec:pfisingrandom}, we prove the theoretical claims for our results on random field Ising models. \cref{sec:pfmainlem} is devoted to the proofs of technical lemmas that were previously used to prove our main results. Finally, we prove some technical matrix lemmas in the Appendix \cref{sec:pfauxlem}.

\section{Proof of main results}\label{sec:proof of mainres}
\begin{proof}[Proof of Lemma \ref{lem:uniqueness of optimizers}]
   Recall the definition of $I_i(z_i) = z_i \phi_i(z_i) - \psi_i(\phi_i(z_i))$, for $1\le i\le n$, from \cref{def:expfam}. Observe that
	\begin{align*}
		I_i'(z_i) = \phi_i(z_i), \quad I_i''(z_i) = \phi_i'(z_i) = \frac{1}{\psi_i''(\phi_i(z_i))} \ge 1.
	\end{align*}
	The inequality follows by noting that
	$\psi_i''(\phi_i(z_i)) = \Var_{\mu_{i,\phi_i(z_i)}} (Z) \le 1$.
	Hence, given any $\bv$ with $\lVert \bv \rVert=1$, we have:
	\begin{align*}
		\bv^{\top}\nabla^2 F(\bz)\bv & = \left( \bv^{\top} \A_n\bv - \sumin \frac{v_i^2}{\psi_i''(\phi_i (z_i))}\right) \\ & \le  \, \lVert \A_n\rVert-1 \le \rho-1< 0, 
	\end{align*}
	where the final inequality follows from the WHT condition \eqref{eq:wht}. Therefore $F(\cdot)$ is strictly concave, and so, there exists a unique maximizer $\bu$ in the convex domain $[-1,1]^n$. This maximizer cannot be at the boundary of $[-1, 1]^n$, since $$\frac{\partial}{\partial z_i} F(\bz) =  \sumjn \A_n(i,j) z_j \red{+ c_i} - \phi_i(z_i)$$
	for $1\le i\le n$, and $\phi_i(z_i) \to \pm \infty$ as $z_i \to \pm 1$.
	The fixed point equations in \eqref{eq:fpeq} now follow by setting $\nabla F(\bz) = 0$. 
\end{proof}

\begin{defi}\label{def:conex}
			For a bounded measurable function $g(\cdot)$ on $[-1,1]$, let $\mathbf{b}^{(g)}:=(b^{(g)}_1,\cdots,b^{(g)}_n)^\top \in [-1,1]^n$ denote the vector of conditional expectations of $g(\cdot)$ under model \eqref{eq:model}, defined by
			$$b^{(g)}_i:=\EE[g(\sigma_i) | \sigma_{j},\ j\neq i].$$
   In the special case where $g(x)=x$, we simplify notation from $\mathbf{b}^{(g)}$ to $\mathbf{b}$, i.e., 
   $$b_i := \EE[\sigma_i | \sigma_j,\ j\neq i] = \psi_i'(m_i+c_i),$$
   where $\{m_i\}_{1\le i\le n}$ are the local fields defined in \eqref{eq:locfield}.
\end{defi} 

 \noindent Our first lemma provides concentration results for linear combinations of $g(\sigma_i)$s which are conditionally centered. 
	
\begin{lemma}\label{lem:sum L2 bound}
		Suppose $\A_n$ satisfies $\lVert \A_n\rVert\le h < \infty$. Let $\bgamma := (\gamma_1,\ldots ,\gamma_n)^\top \in \R^n$ be a deterministic $n$-dimensional vector.  Then, for any bounded measurable function $g(\cdot)$ on $[-1,1]$, the following inequalities hold:
		\begin{enumerate}[(a)]
			\item For any $t > 0$, $$\PP \left(\bigg|\sumin \gamma_i \big(g(\sigma_i) - b^{(g)}_i\big)\bigg| > t \right) \leq 2 \exp\left(-\frac{a t^2}{\|\red{\bgamma}\|^2}\right)$$
			for some constant $a>0$ depending only on $h$ \red{and $\|g\|_\infty$}. Consequently, for $r>0$, we have $$\EE\bigg|\sumin \gamma_i \big(g(\sigma_i) - b^{(g)}_i\big)\bigg|^r \lesssim \left(\frac{\|\bgamma\|}{\sqrt{2a e}}\right)^r r^{\frac{r+1}{2}}.$$
		\item Moreover, $$\EE\bigg|\sumin \gamma_i \big(g(\sigma_i) - b^{(g)}_i\big) b^{(g)}_i\bigg|^2 \lesssim \sumin \gamma_i^2.$$
  \noindent The hidden constant here depends on $h$ and \red{and $\|g\|_\infty$} but not on $\bgamma$.
	\end{enumerate}
\end{lemma}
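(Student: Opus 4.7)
The plan is to prove both parts by exploiting the conditional centering $\EE[g(\sigma_i) - b_i^{(g)} \mid \sigma_j, j \neq i] = 0$. For part (a) I would apply Chatterjee's method of exchangeable pairs to obtain sub-Gaussian concentration of $\phi(\bs) := \sum_i \gamma_i(g(\sigma_i) - b_i^{(g)})$, and for part (b) I would directly expand the $L^2$ norm and bound the cross-covariances via a Lipschitz estimate on the conditional means. Note that $\EE[\phi(\bs)] = 0$ by the tower property, so both statements concern fluctuations around zero.

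For part (a), I would construct an exchangeable pair $(\bs, \bs')$ via a Glauber move: pick $I$ uniformly in $\{1, \ldots, n\}$ and resample $\sigma_I' \sim \PP(\sigma_I \in \cdot \mid \sigma_j, j \neq I)$, keeping the other coordinates fixed. Define the antisymmetric kernel $F(\bs, \bs') := n \gamma_I(g(\sigma_I) - g(\sigma_I'))$. By the tower property, $\EE[F(\bs, \bs') \mid \bs] = \phi(\bs)$. Chatterjee's exchangeable-pair concentration inequality \cite{chatterjee2011nonnormal} then yields
\begin{equation*}
\PP(|\phi(\bs)| \ge t) \le 2\exp\!\left(-\frac{t^2}{2\|v\|_\infty}\right), \quad v(\bs) := \tfrac{1}{2}\EE\!\left[|F(\bs,\bs')(\phi(\bs)-\phi(\bs'))|\bigm|\bs\right],
\end{equation*}
so the task reduces to proving $\|v\|_\infty \lesssim \|\bgamma\|^2$. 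To this end I would compute $\phi(\bs)-\phi(\bs')$: the term $b_I^{(g)}$ is unchanged (it depends only on $\{\sigma_j : j \neq I\}$), while for $j \neq I$ the exponential-family identity $\partial_{\sigma_I} b_j^{(g)} = \A_n(j,I)\,\Cov_{\mu_{j,m_j+c_j}}(g(Z),Z)$ implies $b_j^{(g)}(\bs) - b_j^{(g)}(\bs') = (\sigma_I - \sigma_I')\,\A_n(j,I)\,\bar\xi_j$ for some uniformly bounded $\bar\xi_j$. Writing the resulting sum over $j$ as $(\A_n(\bgamma \odot \bar\xi))_I$ and averaging over $I$ gives $\|v\|_\infty \lesssim \|\bgamma\|^2 + \|\bgamma\|\,\|\A_n(\bgamma\odot\bar\xi)\| \le (1+h)\|\bgamma\|^2$. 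The $L^r$ moment bound then follows by integrating the sub-Gaussian tail $\EE|\phi|^r = r\int_0^\infty t^{r-1}\PP(|\phi|>t)\,dt$ and applying Stirling.

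For part (b), I would expand the square directly as
\begin{equation*}
\EE\!\left[\left(\sum_i \gamma_i X_i b_i^{(g)}\right)^{\!2}\right] = \sum_i \gamma_i^2\,\EE[X_i^2(b_i^{(g)})^2] + \sum_{i \neq j} \gamma_i \gamma_j\,\EE[X_i X_j b_i^{(g)} b_j^{(g)}],
\end{equation*}
where $X_i := g(\sigma_i) - b_i^{(g)}$. The diagonal contribution is trivially $O(\|\bgamma\|^2)$ since $X_i$ and $b_i^{(g)}$ are uniformly bounded. For each off-diagonal term, I would condition on $\mathcal{F}_{-i} := \sigma(\sigma_k : k \neq i)$: on this event $b_i^{(g)}$ and $g(\sigma_j)$ are measurable, and the conditional mean-zero property $\EE[X_i \mid \mathcal{F}_{-i}] = 0$ reduces the cross term to a conditional covariance of $X_i$ with $b_j^{(g)}$ viewed as a function of $\sigma_i$. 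By Cauchy--Schwarz under the conditional distribution and the same Lipschitz bound $|\partial_{\sigma_i} b_j^{(g)}| \lesssim |\A_n(j,i)|$, each cross term is at most $C|\A_n(i,j)|$ in magnitude, and the summed bilinear form is controlled by $\|\A_n\|\|\bgamma\|^2 \le h\|\bgamma\|^2$ via the same Hadamard-product rearrangement as in part (a).

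The main technical obstacle in both parts is that the cross terms do not literally vanish as they would under a product measure, because $b_j^{(g)}$ depends on $\sigma_i$ (for $i \neq j$) through the local field $m_j$. The exponential-family Lipschitz estimate $|\partial_{\sigma_i} b_j^{(g)}| \lesssim |\A_n(j,i)|$ is the key ingredient that quantifies this dependence; the delicate step is the Hadamard-product rearrangement that replaces entrywise-absolute-value sums (which in general cannot be controlled by the spectral norm $\|\A_n\|$) with genuine $\A_n$-times-bounded-vector expressions, so that the WHT hypothesis $\|\A_n\| \le h$ suffices to close the argument.
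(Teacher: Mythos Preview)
Your overall strategy matches the paper's exactly: Chatterjee's exchangeable-pair concentration via a single Glauber move for (a), and a direct square expansion with leave-one-out conditioning for (b). However, both parts share a real gap that your closing paragraph correctly flags as ``the delicate step'' but does not actually resolve.

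In part (a), you write $b_j^{(g)}(\bs)-b_j^{(g)}(\bs')=(\sigma_I-\sigma_I')\A_n(j,I)\bar\xi_j$ and then form $(\A_n(\bgamma\odot\bar\xi))_I$ as if $\bar\xi$ were a fixed vector. It is not: the mean-value theorem gives $\bar\xi_j=\mathcal G_j'(\zeta_j)$ with $\zeta_j$ between $m_j+c_j$ and $m_j+c_j+\A_n(j,I)(\sigma_I'-\sigma_I)$, so $\bar\xi_j$ depends on $I$ and on $\sigma_I'$. When you then ``average over $I$'' you are really bounding $\sum_I|\gamma_I|\,\big|(\A_n(\bgamma\odot\bar\xi^{(I,\sigma_I')}))_I\big|$ with a different vector in each summand, and no Cauchy--Schwarz collapses this to $h\|\bgamma\|^2$. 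In part (b) the same issue appears in a different guise: once you bound each cross term individually by $C|\A_n(i,j)|$, the sum you must control is $|\bgamma|^\top|\A_n|\,|\bgamma|$, and $\||\A_n|\|$ is not bounded by $\|\A_n\|$ in general.

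The paper's fix---which is precisely the missing rearrangement you allude to---is a \emph{second-order} Taylor expansion about the fixed point $m_j+c_j$ rather than the mean-value form. The linear coefficient $\mathcal G_j'(m_j+c_j)$ then depends only on $\bs$ (not on $I$ or $\sigma_I'$), so the first-order contribution is a genuine bilinear form $\bv^\top\A_n\bw$ with $\|\bv\|,\|\bw\|\lesssim\|\bgamma\|$, bounded by $h\|\bgamma\|^2$. The quadratic remainder has entries dominated by $\A_n(i,j)^2$, and since the entrywise-square matrix $\mathbf B_n$ is nonnegative with $\|\mathbf B_n\|\le\|\mathbf B_n\|_\infty=\alpha_n\le\|\A_n\|^2\le h^2$, the absolute-value sum $|\bgamma|^\top\mathbf B_n|\bgamma|$ \emph{is} legitimately $\le h^2\|\bgamma\|^2$. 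Once you insert this two-term expansion in place of the one-term mean-value form, both (a) and (b) close exactly as in the paper.
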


While similar results have been established in the literature (see for e.g.~\cite[Lemma 2.1]{ghosal2020joint} and \cite[Lemma 2.10]{bhattacharya2023gibbs}), the key distinguishing features of \cref{lem:sum L2 bound} are that we impose no restrictions on the external field $\bc$ and the weight vector $\bgamma$. 

For the next lemma, recall the definition of $\mathbf{m}$ and $\mathbf{s}$ from \eqref{eq:locfield} and \cref{lem:uniqueness of optimizers} respectively. We shall provide high probability bounds on the vector $\bmm-\mathbf{s}$ in different norms depending on the nature of the high-temperature assumption as in \cref{assn:ht}.

\begin{lemma}\label{lem:m-n contraction}
  \vspace{0.1in}   
  \noindent (a) Suppose the WHT condition \eqref{eq:wht} holds. Then, we have $$\EE \sumin (m_i - s_i)^2  \lesssim n \an.$$ 
  \vspace{0.1in}
  \noindent (b) Suppose the MHT condition \eqref{eq:mht} holds. Then, we have $$\EE \sumin (m_i - s_i)^4 \lesssim n \alpha_n^2.$$
   \vspace{0.1in} 
  \noindent (c) Suppose the SHT condition \eqref{eq:sht} holds. %
  Then, with the same constant $a>0$ as in \cref{lem:sum L2 bound}, for any $r\in [2,\infty)$, we have 
	$$\EE \sumin |m_i - s_i|^{r} \lesssim n\left(\frac{\an}{2ae(1-\rho)^2}\right)^{\frac{r}{2}} r^{\frac{r+1}{2}}.$$
\end{lemma}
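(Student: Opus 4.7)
The plan is to handle all three parts by one unified self-bounding argument, where only the ambient norm (and hence the relevant high-temperature condition) changes between cases. Setting $X_i := m_i - s_i = \sumjn \A_n(i,j)(\sigma_j - u_j)$, I would use the fixed-point equation $u_j = \psi_j'(s_j + c_j)$ from \eqref{eq:fpeq} together with $b_j = \psi_j'(m_j + c_j)$ from \cref{def:conex} to split
\[
\sigma_j - u_j = (\sigma_j - b_j) + (b_j - u_j),
\]
which decomposes $X_i = Y_i + Z_i$ with $Y_i := \sumjn \A_n(i,j)(\sigma_j - b_j)$ the ``noise'' piece and $Z_i := \sumjn \A_n(i,j)(b_j - u_j)$ the ``drift'' piece.

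The key Lipschitz observation is that $\psi_j''(\theta) = \Var_{\mu_{j,\theta}}[Z] \le 1$ since $\mu_j$ is supported on $[-1,1]$, so by the mean value theorem one has the pointwise bound $|b_j - u_j| \le |m_j - s_j| = |X_j|$. Writing $\mathbf{Z} = \A_n(\bb - \bu)$ in vector form, this gives for every $r\in [2,\infty)$,
\[
\lVert \mathbf{Z}\rVert_r \le \lVert \A_n\rVert_r\, \lVert \bb - \bu\rVert_r \le \lVert \A_n\rVert_r\, \lVert \mathbf{X}\rVert_r ,
\]
where by \cref{lem:normorder} the operator norm on the right is bounded by $\rho$ under WHT for $r=2$, under MHT for $r=4$, and under SHT for every $r\in (2,\infty)$.

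Next I would apply Minkowski's inequality in $L^r(\PP)$ to obtain $(\EE \lVert \mathbf{X}\rVert_r^r)^{1/r} \le (\EE \lVert \mathbf{Y}\rVert_r^r)^{1/r} + \rho\, (\EE \lVert \mathbf{X}\rVert_r^r)^{1/r}$, which absorbs the $\mathbf{Z}$-term and gives $\EE \sumin |X_i|^r \le (1-\rho)^{-r}\, \EE \sumin |Y_i|^r$. For each fixed $i$, $Y_i$ is a linear combination of conditionally centered increments $\sigma_j - b_j$ with weights $\gamma_j = \A_n(i,j)$, so \cref{lem:sum L2 bound}(a) with $g(x)=x$ yields
\[
\EE |Y_i|^r \lesssim (2ae)^{-r/2}\, r^{(r+1)/2}\, \lVert \A_n(i,\cdot)\rVert^r \le (2ae)^{-r/2}\, r^{(r+1)/2}\, \an^{r/2} .
\]
Summing over $i$ and plugging in produces $\EE \sumin |X_i|^r \lesssim n\, \bigl(\an / (2ae(1-\rho)^2)\bigr)^{r/2}\, r^{(r+1)/2}$, which is exactly (c); parts (a) and (b) follow by specializing to $r=2$ and $r=4$, absorbing the $r^{(r+1)/2}$ factor into the implicit constant.

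I expect the main delicacy to lie in ensuring this self-bounding step genuinely closes in all three regimes with the same structural argument: the contraction factor on $\mathbf{Z}$ is exactly $\lVert \A_n\rVert_r$, and the Lipschitz constant $1$ coming from $\psi_j'' \le 1$ is sharp, so no further slack is available for absorption. Any weakening of the corresponding high-temperature hypothesis would destroy the strict contraction and force a qualitatively different decomposition.
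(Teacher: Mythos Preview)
Your proposal is correct and essentially identical to the paper's proof: both decompose $m_i-s_i$ into the noise $\mathfrak{f}_i=\sumjn \A_n(i,j)(\sigma_j-b_j)$, controlled via \cref{lem:sum L2 bound}(a), and a drift piece that contracts under the relevant $\|\cdot\|_r$ operator norm because $\psi_j''\le 1$, then absorb the drift to close the bound. The only cosmetic difference is that the paper records the contraction as an exact linear system $\mathbf{m}-\mathbf{s}=\mathbf{C}_n(\mathbf{m}-\mathbf{s})+\boldsymbol{\mathfrak{f}}$ and inverts $(\I_n-\mathbf{C}_n)$ via \cref{lem:matrix norm} to obtain a pointwise inequality, whereas you absorb via Minkowski in $L^r(\PP)$; note that your bound $\|\mathbf{Z}\|_r\le\rho\|\mathbf{X}\|_r$ already holds pathwise, so the triangle inequality on each sample yields $\|\mathbf{X}\|_r\le(1-\rho)^{-1}\|\mathbf{Y}\|_r$ deterministically and the Minkowski detour is unnecessary.
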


Now, we are ready to prove \cref{thm:LLN}.

\begin{proof}[Proof of \cref{thm:LLN}]
  We first provide a general tail bound on $T_n^*(\bs)=\sumin  q_i(\sigma_i-u_i)$ which will then be leveraged to prove \eqref{eq:firstbd} and \eqref{eq:secondbd}. Let us assume that $\lVert \A_n\rVert\le \rho<1$. For non-negative integers $\ell \ge 0$ and $1\le j\le n$, recursively define $\bq^{(\ell)}$ by $q_j^{(0)} = q_j$ and
\begin{align}\label{eq:recdef}	
 q_j^{(\ell)} :=  \sumin q_i^{(\ell-1)} \A_n(i,j) \psi_i''( s_i + c_i), 
\end{align}
	where $c_i$s are defined as in \eqref{eq:model}. In other words, setting
 \begin{align*}%
     \mathbf{C}_n := \A_n \text{Diag}(\psi_1''( s_1 + c_1),\ldots , \psi_n''(s_n+c_n)), \quad \mbox{we have }\, \bq^{(\ell)} = \mathbf{C}_n \bq^{(\ell-1)}, \, \ell\ge 1.
 \end{align*}
By a Taylor expansion, for any $\ell\ge 0$ we have
	\begin{equation}\label{eq:b_i - u_i decomposition}
		\begin{aligned}
			\sumin q_i^{(\ell)} (b_i - u_i) &=\sumin q_i^{(\ell)}\Big(\psi_i'(m_i+c_i)-\psi_i'(s_i+c_i)\Big)\\
    &=\sumin q_i^{(\ell)} \left(  (m_i - s_i) \psi_i''( s_i + c_i) + \frac{1}{2} (m_i - s_i)^2 \psi_i'''(\xi_i + c_i) \right) \\
			&= \sumjn q_j^{(\ell+1)} (\sigma_j - u_j) + \frac{1}{2} \sumin q_i^{(\ell)}(m_i - s_i)^2 \psi_i'''(\xi_i + c_i), 
		\end{aligned}
	\end{equation}
 for some $\xi_i$ between $m_i$ and $s_i$.
	For $\ell\ge 0$, setting
 \begin{align*}%
     T_{(\ell)} := \sumin q_i^{(\ell)} (\sigma_i - u_i),
 \end{align*}
	 use \cref{lem:normorder} and the fact that $|\psi_i''(\cdot)|\le 1$ to conclude that 
     \begin{equation}\label{eq:negligible}
     |T_{(\ell)}|\le \red{2} \sqrt{n}\|\mathbf{C}_n\|^\ell \|{\mathbf q}\| \le 2 \sqrt{n}\|\A_n\|^\ell \le 2 \sqrt{n}\rho^\ell  \stackrel{\ell\to\infty}{\to} 0.
     \end{equation}
 Thus we can write
 $$T_n^*(\bs)=\sumin q_i(\sigma_i-u_i) = T_{(0)} = \sum_{\ell=0}^{\infty} (T_{(\ell)}-T_{(\ell+1)}) = \sum_{\ell=0}^{\infty} \sumin (q_i^{(\ell)}-q_i^{(\ell+1)})(\sigma_i-u_i).$$
 By using the triangle inequality and \eqref{eq:b_i - u_i decomposition}, we then have:
 \begin{align*}
     \bigg|\sumin q_i(\sigma_i-u_i)\bigg| & \le \sum_{\ell=0}^{\infty} \bigg|\sumin q_i^{(\ell)}(\sigma_i-u_i)-\sumin q_i^{(\ell+1)}(\sigma_i-u_i)\bigg| \nonumber \\
     & \le \sum_{\ell=0}^{\infty} \bigg|\red{\sumin  q_i^{(\ell)}(\sigma_i-b_i)+ \frac{1}{2}\sumin q_i^{(\ell+1)}(m_i-s_i)^2 \psi_i'''(\xi_i+c_i)} \bigg| \nonumber \\
     &\le \sum_{\ell=0}^{\infty} \bigg|\sumin q_i^{(\ell)}(\sigma_i-b_i)\bigg|+ \frac{1}{2} \sum_{\ell=0}^{\infty}\bigg|\sumin  q_i^{(\ell)}(m_i-s_i)^2\psi_i'''( \xi_i+s_i)\bigg|.
 \end{align*}
Now, for any $\ell\ge 0$, Cauchy-Schwartz inequality along with the bound $|\psi_i'''(\cdot)|\le 8$ gives 
 
 $$\bigg|\sumin q_i^{(\ell)}(m_i-s_i)^2\psi_i'''(\xi_i+s_i)\bigg|\le 8\lVert\bq^{(\ell)} \rVert \lVert \mathbf{m} - \mathbf{s} \rVert_4^2 \le 8\rho^\ell \lVert \mathbf{m} - \mathbf{s} \rVert_4^2.$$
 Combining the above observations, we get: 
 \begin{align}\label{eq:gentailbd}
     &\;\;\;\;\;\PP\left(\bigg|\sumin q_i(\sigma_i-u_i)\bigg|\ge 2t\right)\nonumber \\ &\le \sum_{\ell=0}^{\infty} \PP\left(\bigg|\sumin q_i^{(\ell)}(\sigma_i-b_i)\bigg|\ge t a_\ell\right)+\sum_{\ell=0}^{\infty} \PP\left(4\lVert \mathbf{m} - \mathbf{s} \rVert_4^2 \ge c t (\ell+1)\right).
 \end{align}
In the above display, we set $a_\ell:=c(\ell+1)\rho^\ell$ for $\ell\ge 0$, where the constant $c=c(\rho)$ is chosen such that $\sum_{\ell=0}^\infty a_\ell=1$.
 We will now bound the two terms above. %
 For the first term, we invoke \cref{lem:sum L2 bound}(a), to get:
 \begin{align}\label{eq:triedandtested}
   \notag  \sum_{\ell=0}^{\infty} \PP\left(\bigg|\sumin q_i^{(\ell)}(\sigma_i-b_i)\bigg|\ge t a_l\right)&\le  \red{2\sum_{\ell=0}^{\infty} \exp\left(-a \frac{t^2 a_\ell^2}{\rho^{2\ell}}\right)} \\
   &=\red{2\sum_{\ell=0}^{\infty} \exp\left(-ac^2 t^2 (\ell+1)^2\right)}\nonumber \\
   &\le \red{ 2\sum_{\ell=0}^{\infty} \exp\left(-ac^2 t^2 (\ell+1) \right)} \nonumber \\
   &\le \red{2\frac{\exp(-ac^2 t^2)}{1-\exp(-ac^2)},}
 \end{align}
where the last inequality holds for $t\ge 1$, and $a>0$ is the constant from \cref{lem:sum L2 bound}.
We now split the proof in two parts. 
 
 \vspace{0.1in}

 \noindent\emph{Proof of \eqref{eq:firstbd}.} By \eqref{eq:gentailbd} and \eqref{eq:triedandtested}, we have 
 \begin{align*}
 \EE \big(T_n^*(\bs)\big)^2 & = 8 \int_0^{\infty} t\, \PP\left(\bigg|\sumin q_i(\sigma_i-u_i)\bigg|\ge 2t\right)\,dt \\ 
 &\le \red{8 \int_0^1 tdt + 16 \int_1^\infty t \frac{\exp(-ac^2 t^2)}{1-\exp(-ac^2)} dt + 8 \sum_{l=0}^\infty \int_0^\infty  t \PP\Big(\frac{4\|\mathbf{m}-\mathbf{s}\|_4^2}{c(\ell+1)} \ge t \Big) dt} \\
 & = 4+\frac{\red{16}}{1-\exp(-ac^2)}\int_{1}^{\infty} t \exp(-ac^2 t^2)\,dt + \frac{\red{64}}{c^2}\EE \|\mathbf{m}-\mathbf{s}\|_4^4\sum_{\ell=0}^{\infty} (\ell+1)^{-2}. 
 \end{align*}
 \red{Here, the first and final equality follow from the identity $\EE X_n^2 = 2 \int_0^\infty t \PP(|X_n| \ge t) dt$.}
 The conclusion now follows by invoking \cref{lem:m-n contraction}(b).

 \vspace{0.1in}
 
 \noindent\emph{Proof of \eqref{eq:secondbd}.} Without loss of generality, we can assume $t\ge \max(1,\sqrt{n}\an)$. Otherwise we can just adjust the constants $h_1,h_2$ to account for this. Let $a>0$ denote the constant from \cref{lem:sum L2 bound}(a) and fix $$\vartheta_n:=\frac{a(1-\rho)^2}{16\sqrt{n}\an}.$$ Next, we note the following sequence of equalities and inequalities with line by line explanations to follow.
 \begin{align*}
     \sum_{\ell=0}^{\infty} \PP &\left(8\lVert \mathbf{m}-\mathbf{s}\rVert_{4}^2 \ge ct (\ell+1)\right) \le \sum_{\ell=0}^{\infty}\exp(-\vartheta_n ct (\ell+1)) \EE \exp\left(8\vartheta_n \|\mathbf{m}-\mathbf{s}\|_4^2\right)\\ &=\sum_{\ell=0}^{\infty} \exp(-\vartheta_n ct(\ell+1))\sum_{r=0}^{\infty} \frac{(8\vartheta_n)^r\EE \big[\sumin (m_i-s_i)^4\big]^{\frac{r}{2}}}{r!}\\ 
     & \le \sum_{\ell=0}^{\infty} \exp(-\vartheta_n ct (\ell+1)) \left(1+\red{8\vartheta_n \sqrt{\EE \sumin (m_i-s_i)^4}} + \sum_{r=2}^{\infty} n^{\frac{r}{2}-1}\frac{(8\vartheta_n)^r}{r!}\EE \sumin (m_i-s_i)^{2r}\right)\\ 
     &\lesssim \sum_{\ell=0}^{\infty} \exp(-\vartheta_n ct (\ell+1)) \left(1+\sum_{r=1}^{\infty} n^{\frac{r}{2}}\frac{(8\vartheta_n)^r}{r!}\left(\frac{\an}{2ae(1-\rho)^2}\right)^r (2r)^{r+\frac12}\right)\\ &\le \frac{1}{\sqrt{\pi}}\sum_{\ell=0}^{\infty} \exp(-\vartheta_n ct(\ell+1))\sum_{r=0}^{\infty} \left(\frac{8\sqrt{n}\an \vartheta_n}{a(1-\rho)^2}\right)^r\\
     &=\frac{\exp(-\vartheta_n ct)}{\sqrt{\pi}(1-\exp\left(-\vartheta_n ct\right))}\sum_{r=0}^\infty 2^{-r}\le  \frac{2}{\sqrt{\pi}}\frac{\exp(-\vartheta_n ct)}{1-\exp\left(-\frac{ac(1-\rho)^2}{16}\right)}.
 \end{align*}
 The first display above follows from Markov's inequality. The second display uses the power series expansion of the exponential function. The third display is a consequence of H\"{o}lder's inequality (for the summands indexed by $r \ge 2$). For the fourth display, we use \cref{lem:m-n contraction}(c) \red{to control both expectations}. The fifth display is a product of the standard lower bound $r!\ge \sqrt{2\pi} r^{r+1/2} e^{-r}$. The final inequality is immediate from the choice of $\vartheta_n$, and the assumption $t\ge \sqrt{n}\an$.
 The above display, combined with \eqref{eq:gentailbd} and \eqref{eq:triedandtested}, concludes the proof.
\end{proof}

Now, we prove \cref{thm:clt}. We first state a lemma that upper bounds the norm of $\mathbf{s}$ by the norm of $\boldsymbol{\nu}$, which will be used to simplify some error terms.
\begin{lemma}\label{lem:optimizer u l2 concentration}
For all $i= 1,\ldots,n$, let $\nu_i = \sumjn\A_n(i,j) \psi_j'(c_j)$ be as in~\cref{thm:clt}, and $\boldsymbol{\nu} = (\nu_1, \ldots, \nu_n)^\top$. Recall the definition of ${\bm s}$ from~\cref{lem:uniqueness of optimizers}.
\begin{enumerate}[(a)]
    \item Suppose the WHT condition \eqref{eq:wht} holds. Then $\lVert \mathbf{s} \rVert \lesssim \lVert \boldsymbol{\nu} \rVert$.
    \item Suppose the MHT condition \eqref{eq:mht} holds. Then $\lVert \mathbf{s} \rVert_4 \lesssim \lVert \boldsymbol{\nu} \rVert_4$.
    \item Suppose the SHT condition \eqref{eq:sht} holds. Then $\lVert \mathbf{s} \rVert_\infty \lesssim \lVert \boldsymbol{\nu} \rVert_\infty$.
\end{enumerate}
\end{lemma}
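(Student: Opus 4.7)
My plan is to convert the identity defining $\mathbf{s}$ into a linear equation relating $\mathbf{s}$ and $\boldsymbol{\nu}$, and then invert it via a Neumann-series argument in the appropriate operator norm.

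First, I would use the fixed-point equation $u_j=\psi_j'(s_j+c_j)$ from~\cref{lem:uniqueness of optimizers} to rewrite $s_i=\sum_{j=1}^n \A_n(i,j)\,\psi_j'(s_j+c_j)$. Subtracting $\nu_i=\sum_{j=1}^n \A_n(i,j)\,\psi_j'(c_j)$ and applying a first-order Taylor expansion of $\psi_j'$ about $c_j$ gives $\psi_j'(s_j+c_j)-\psi_j'(c_j)=\psi_j''(\xi_j)\,s_j$ for some $\xi_j$ between $c_j$ and $s_j+c_j$. Setting $D:=\diag(\psi_1''(\xi_1),\ldots,\psi_n''(\xi_n))$, which is diagonal with entries in $(0,1]$ because each $\psi_j''$ is the variance of a probability measure supported on $[-1,1]$, I obtain the vector identity
\begin{equation*}
\mathbf{s}-\boldsymbol{\nu}=\A_n D\,\mathbf{s}\qquad\Longleftrightarrow\qquad (\I_n-\A_n D)\,\mathbf{s}=\boldsymbol{\nu}.
\end{equation*}

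Next, for each of the three regimes I would verify that right-multiplication by $D$ cannot enlarge the relevant operator norm. For any $\mathbf{x}\in\R^n$ and $r\in\{2,4,\infty\}$, $\lVert D\mathbf{x}\rVert_r\le \bigl(\max_j |\psi_j''(\xi_j)|\bigr)\cdot\lVert \mathbf{x}\rVert_r\le \lVert \mathbf{x}\rVert_r$, so by submultiplicativity $\lVert \A_n D\rVert_r\le \lVert \A_n\rVert_r\le \rho<1$ under the corresponding WHT/MHT/SHT hypothesis (and invoking \cref{lem:normorder} for $r=2$). The Neumann expansion $(\I_n-\A_n D)^{-1}=\sum_{k=0}^{\infty}(\A_n D)^k$ then converges in that operator norm with norm at most $(1-\rho)^{-1}$, giving
\begin{equation*}
\lVert \mathbf{s}\rVert_r=\bigl\lVert (\I_n-\A_n D)^{-1}\boldsymbol{\nu}\bigr\rVert_r\le (1-\rho)^{-1}\lVert\boldsymbol{\nu}\rVert_r,
\end{equation*}
which is exactly the bound claimed in parts (a), (b), (c) upon choosing $r=2,4,\infty$ respectively.

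I do not foresee any genuine obstacle: the only minor subtlety is that $D$ depends on $\mathbf{s}$ through the intermediate points $\xi_j$, but since the argument only uses the uniform estimate $|\psi_j''|\le 1$, this dependence is harmless. The whole argument is the same two-line contraction carried out in three different $(r,r)$-operator norms.
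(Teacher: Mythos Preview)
Your argument is correct and essentially identical to the paper's: both Taylor-expand $\psi_j'(s_j+c_j)$ around $c_j$ to obtain $(\I_n-\A_nD)\mathbf{s}=\boldsymbol{\nu}$ with $D$ diagonal and $|D_{jj}|\le 1$, then invert using the bound $\|\A_nD\|_r\le\|\A_n\|_r\le\rho$ in the appropriate $(r,r)$-norm. The paper packages the inversion via \cref{lem:matrix norm} rather than an explicit Neumann series, and your reference to \cref{lem:normorder} for $r=2$ is unnecessary (WHT already gives $\|\A_n\|=\|\A_n\|_2\le\rho$ directly), but these are cosmetic differences.
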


\begin{proof}[Proof of \cref{thm:clt}]
Without loss of generality, we can assume that the  right hand side of \eqref{eq:ksbd} is bounded by $1$, which means, in particular that $\sqrt{n}\an\le 1$. Therefore, by \cref{thm:LLN}, there exists $C>0$ such that 
\begin{equation}\label{eq:prelbd}
    \EE\bigg|\sumin q_i(\sigma_i-u_i)\bigg|^2\le C.
\end{equation}

We will now split the proof into multiple steps. 

\vspace{0.1in}

 \emph{Step 1: Stein's method.}
	Recall that $\boldsymbol{\epsilon} =\A_n \bq - \lambda_n \bq$ and $T_n^*(\bs)=\bq^{\top}(\bs-\bu)$. Sample $\bs'$ as follows: let $I$ be a randomly sampled index from $\{1,2,\ldots ,n\}$. Given $I=i$, replace $\sigma_i$ with an independently generated $\sigma'_i$ drawn from the conditional distribution of $\sigma_i$ given $\sigma_j,\ j\neq i$. Then $$T_n^*(\bs')=\bq^{\top}(\bs'-\bu), \text{ and so }T_n^*(\bs)-T_n^*(\bs')=q_I(\sigma_I-\sigma_I')=:\Delta_n.$$
	By a Taylor's series expansion of $b_i = \psi_i'( m_i + c_i)$ around $m_i \approx s_i$ (recall that $s_i$ was defined in \cref{lem:uniqueness of optimizers}), we get that for some $\{\xi_i\}_{1\le i\le n}$:
	\begin{align*}
		\EE &(T_n^*(\bs) - T_n^*(\bs') \mid \bs) = \frac{1}{n} \sumin q_i(\sigma_i - b_i) \\
		&= \frac{1}{n} \sumin q_i \left(\sigma_i - u_i - (m_i - s_i)\psi_i''( s_i + c_i) - \frac{1}{2} (m_i-s_i)^2 \psi_i'''( \xi_i + c_i) \right) \\
		&= \frac{T_n^*(\bs)}{n} - \frac{1}{n} \sumin q_i (m_i - s_i) \psi_i''( s_i + c_i) - \frac{1}{2n} \sumin q_i (m_i-s_i)^2 \psi_i'''( \xi_i + c_i) \\
		&= \underbrace{\frac{(1 -  \lambda_n \upsilon_n) T_n^*(\bs)}{n}}_{g(T_n^*(\bs))} - \underbrace{\frac{1}{n} \sumin q_i (m_i - s_i) (\psi_i''( s_i + c_i) - \upsilon_n)}_{H_1} \\
		&\quad - \underbrace{\frac{1}{2n} \sumin q_i (m_i-s_i)^2 \psi_i'''( \xi_i + c_i)}_{H_2} - \underbrace{\frac{ \upsilon_n}{n} \sumin \epsilon_i(\sigma_i - u_i)}_{H_3} .
	\end{align*}
	For the final equality, we are simplifying the second term in the penultimate line using the fact that $\sumin\A_n(i,j) q_i = \lambda_n q_j + \epsilon_j$ so that
	\begin{align*}
		\frac{1}{n} \sumin q_i (m_i - s_i) \psi_i''(s_i + c_i) = \frac{1}{n} \sumin q_i (m_i - s_i)(\psi_i''(s_i + c_i) - \upsilon_n) + \frac{\upsilon_n}{n} \sumjn (\lambda_n q_j + \epsilon_j) (\sigma_j - u_j) .
	\end{align*}
   Note that $|\Delta_n|\le 2\lVert \bq\rVert_{\infty}$, and recall the definition of $W_n$ from \cref{thm:clt}. 

Now we apply \cite{Shao2019}, which provides a KS distance bound between a random variable $S_n^*$ and a standard normal. {Recall from the statement of \cref{thm:clt} that the constants $\rho, \kappa > 0$ satisfy $|\lambda_n|\le \rho<1, \upsilon_n \ge \kappa$. For notational convenience, set $S_n^*(\bolds) := T_n^*(\bolds) \sqrt{\frac{1-\lambda_n \upsilon_n}{\upsilon_n}}$ as a re-scaled version of $T_n^*$  %
and note that
$$\EE(S_n^*(\bolds) - S_n^*(\bolds') \mid S_n^*(\bolds)) = g(S_n^*(\bolds)) - \sqrt{\frac{1-\lambda_n \upsilon_n}{\upsilon_n}} \EE(H_1+H_2+H_3 \mid S_n^*(\bolds)).$$
Set $\delta_n := 2 \lVert \bq\rVert_{\infty} \sqrt{\frac{1-\lambda_n \upsilon_n}{\upsilon_n}}$ so that $|S_n^*(\bolds) - S_n^*(\bolds')| \le \delta_n.$ 
Noting that $\EE|S_n^*(\bolds)| = \EE|T_n^*(\bolds)|\sqrt{\frac{1-\lambda_n \upsilon_n}{\upsilon_n}} \lesssim 1$ by using the preliminary bound \eqref{eq:prelbd}, $\upsilon_n \ge \kappa > 0$, and $ 1-\lambda_n \upsilon_n \le 1+\rho$, we can apply \cite[Corollary 2.1]{Shao2019} (in the second line below), which gives
	\begin{align}
		d_{KS}(T_n^*(\bs) , W_n) &= d_{KS}(S_n^*(\bolds), \mcn(0,1) ) \notag \\
        &\le \EE \left| 1 - \frac{n}{2(1-\lambda_n \upsilon_n)} \EE((S_n^*(\bolds) - S_n^*(\bolds'))^2 \mid S_n^*(\bs)) \right| + \frac{n \EE \left[ \sum_{a=1}^3 |H_a| \right]}{\sqrt{\upsilon_n(1-\lambda_n\upsilon_n)}} + O(\delta_n) \notag \\
        &\lesssim \EE \left| 1 - \frac{n}{2\upsilon_n} \EE(\Delta_n^2 \mid T_n^*(\bs)) \right| + {n \EE \left[ \sum_{a=1}^3 |H_a| \right]}+ \lVert \bq\rVert_{\infty}.\label{eq:Stein}
	\end{align}
    For the final line, we again used $\upsilon_n \ge \kappa >0$ and $1-\lambda_n \upsilon_n \ge 1-\rho>0$ to simplify the denominator.}

\vspace{0.1in}

\emph{Step 2: Bounding the first term}. We note the following equality:
\begin{align}\label{eq:basedis}
\EE[(\sigma'_i)^2|\bs]=\EE[\sigma_i^2|\sigma_j,\ j\neq i]=\psi_i''( m_i+c_i)+(\psi_i'( m_i+c_i))^2.
\end{align}
As $\Delta_n=q_I(\sigma_I-\sigma'_I)$, we have:
\begin{align*}	
 \frac{\red{n}}{2}\EE(\Delta_n^2 \mid \bs) & = \frac{1}{2} \sumin q_i^2 \EE\left(\sigma_i^2 + (\sigma_i')^2-2\sigma_i\sigma'_i \mid \bs \right) \\ &=\frac{1}{2}\sumin q_i^2 \bigg(\sigma_i^2+\psi_i''( m_i+c_i)+(\psi_i'( m_i+c_i))^2-2\sigma_i\psi_i'( m_i+c_i)\bigg)\\ &=\frac{1}{2}\sumin q_i^2\left(\sigma_i^2-\EE[\sigma_i^2|\sigma_j,\ j\neq i]\right)+\sumin q_i^2\psi_i''( m_i+c_i)\\ &\quad -\sumin q_i^2\left(\sigma_i-\psi_i'( m_i+c_i)\right)\psi_i'( m_i+c_i).
 \end{align*}
 In the last two displays, we have used \eqref{eq:basedis}. Recall the definition of $b_i$'s from \cref{def:conex}. By the tower property, the triangle inequality, and the above display, we get:
	
	\begin{align*}
	&\;\;\;\;	\EE \left| 1 - \frac{n}{2\upsilon_n} \EE(\Delta_n^2 \mid T_n^*(\bs)) \right| \\ &\lesssim \frac{1}{\upsilon_n} \bigg(\EE\bigg|\sumin q_i^2\left(\sigma_i^2-\EE[\sigma_i^2|\sigma_j,\ j\neq i]\right)\bigg|+\EE\bigg|\sumin q_i^2(\psi_i''( m_i+c_i)-\upsilon_n)\bigg|\\ &\qquad +\EE\bigg|\sumin q_i^2 \left(\sigma_i-b_i\right)b_i \bigg| \bigg).
	\end{align*}
    Note that first and the last term in the above bound can be controlled directly by invoking \cref{lem:sum L2 bound}, parts (a) and (b) respectively. This will yield: 
 \begin{align*}
     \EE\bigg|\sumin q_i^2\left(\sigma_i^2-\EE[\sigma_i^2|\sigma_j,\ j\neq i]\right)\bigg|+\EE\bigg|\sumin  q_i^2\EE[\left(\sigma_i-b_i\right)b_i]\bigg|\lesssim \sqrt{\sumin q_i^4}\le \lVert \bq\rVert_{\infty}.
 \end{align*}

 For the second term, we note that by the definition of $\upsilon_n$, we have
 \begin{align*}
     &\;\;\;\;\EE\bigg|\sumin q_i^2(\psi_i''( m_i+c_i)-\upsilon_n)\bigg|\\ &\le \EE\bigg|\sumin  q_i^2(\psi_i''( m_i+c_i)-\psi_i''( s_i+c_i))\bigg|+\bigg|\sumin q_i^2(\psi_i''( s_i+c_i)-\psi_i''(c_i))\bigg|
 \end{align*}
To bound the first term above, we use H\"{o}lder's inequality with exponents $4/3$ and $4$, with \cref{lem:m-n contraction}(b), to get
    \begin{align*}
    \EE\bigg|\sumin q_i^2(\psi_i''( m_i+c_i)-\psi_i''( s_i+c_i))\bigg| &\lesssim \EE \sumin q_i^2 |m_i-s_i| \\
    &\le \left(\sumin q_i^{8/3} \right)^{3/4} \left(\EE\sumin (m_i-s_i)^4 \right)^{1/4} \\
         &\lesssim \lVert \bq\rVert_{\infty}^{1/2} \left(n \alpha_n^2\right)^{1/4} \lesssim  \lVert \bq\rVert_{\infty} + \sqrt{n\alpha_n^2}.
    \end{align*}
For the second term, we can repeat the same argument as above and apply \cref{lem:optimizer u l2 concentration} to get:
\begin{align*}
    \bigg|\sumin q_i^2(\psi_i''( s_i+c_i)-\psi_i''(c_i))\bigg|& \lesssim \left(\sumin q_i^{8/3} \right)^{3/4} \left(\sumin s_i^4 \right)^{1/4}\\ & \lesssim \lVert \bq\rVert_{\infty}^{1/2}\left(\sumin  \nu_i^4\right)^{1/4}\lesssim \lVert \bq\rVert_{\infty}+\sqrt{R_{3n}}.
\end{align*}
By combining the relevant bounds from above and recalling that $\upsilon_n \ge \kappa$, we get:
$$\EE \left| 1 - \frac{n}{2\upsilon_n} \EE(\Delta_n^2 \mid T_n^*(\bs)) \right|\lesssim \left(\lVert \bq\rVert_{\infty}+\sqrt{n\alpha_n^2}+\sqrt{R_{3n}}\right).$$
 
	\emph{Step 3: Controlling $H_a$'s.}
	We will bound
	$n^2 \EE|H_a|^2$s for $a=1,2,3$. 
	Let us begin with $H_1$. Define 
 $$\gamma_i := \sumjn\A_n(i,j) q_j (\psi_j''( s_j + c_j) - \upsilon_n)$$ and note that
	$$n^2 \EE |H_1|^2 = \EE\bigg|\sumin \gamma_i (\sigma_i - u_i)\bigg|^2\lesssim \sumin \gamma_i^2,$$
	where the last inequality follows from \cref{thm:LLN}. \red{Note that we used $\max(1,n\alpha_n^2) = 1$ to simplify the RHS, which follows from the assumption $\sqrt{n} \an \le 1$ (see one line before \eqref{eq:prelbd}).} In order to bound $\sumin \gamma_i^2$, we note that for some $\{\xi_j\}_{1\le i\le n}$,
	\begin{align*}
		\gamma_i^2 &= \bigg|\sumjn\A_n(i,j) q_j (\psi_j''(c_j) - \upsilon_n) +  \sumjn\A_n(i,j) q_j s_j \psi_j'''( \xi_j + c_j)\bigg|^2 \\
		&\le 2\bigg|\sumjn\A_n(i,j) q_j (\psi_j''(c_j) - \upsilon_n)\bigg|^2 + \red{128} \left(\sumjn\A_n(i,j)^2 q_j^2 \right)
		\left(\sumjn s_j^2\right),
	\end{align*}
	where we use the bound $|\psi_j'''(\cdot)| \le 8$. Therefore, by applying \cref{lem:optimizer u l2 concentration}, we get: 
	\begin{align*}
		\sumin \gamma_i^2 &\lesssim \sumin \bigg|{\sumjn\A_n(i,j) q_j (\psi_j''(c_j) - \upsilon_n)}\bigg|^2 + \left(\sumin \sumjn\A_n(i,j)^2 q_j^2\right) \left(\sumjn  \red{\nu_j^2}\right) \\
		&= R_{1n} + \sumin \sumjn\A_n(i,j)^2 q_j^2 R_{2n} 
		\le R_{1n} + \an R_{2n}.
	\end{align*}
 To conclude, we have proved

 $$n^2 \EE|H_1|^2 \lesssim R_{1n}+\an R_{2n}.$$
	We now move on to $H_2$. 
	From \cref{lem:m-n contraction}(b), we have $\EE\sumin (m_i-s_i)^4\lesssim n\alpha_n^2$. Combining this observation with the Cauchy Schwartz inequality, we have: 
 $$n^2\EE|H_2|^2 \lesssim \EE \left(\sumin |q_i| |m_i - s_i|^2\right)^2 \le \lVert \bq\rVert^2 {\EE \sumin (m_i - s_i)^4} \lesssim n \alpha_n^2.$$

    Finally, we bound $H_3$. Recall that $\boldsymbol{\epsilon} =\A_n \bq - \lambda_n \bq$. We again use \cref{thm:LLN} \red{and $\max(1,n\alpha_n^2) = 1$} to see
	$$n^2 \EE |H_3|^2 \lesssim \lVert\boldsymbol{\epsilon} \rVert^2.$$

\vspace{0.1in} 

By combining the above bounds, we get:

$$n^2 \sum_{a=1}^3 \EE |H_a|^2 \lesssim R_{1n} + \an R_{2n} + n \alpha_n^2 + \lVert\boldsymbol{\epsilon} \rVert^2.$$
	\noindent Combining the bounds from the previous two steps with \eqref{eq:Stein} completes the proof.
	\end{proof}

\noindent Next, we prove \cref{thm:CLT for u}. For proving part (b), we use the following simple lemma, whose proof we defer to the appendix.
    \begin{lemma}\label{lem:ksbound}
Fixing $K>1$,  for any random variable $X$ and constants $a>0,b\in \R$ and $\tau\in [K^{-1},K]$ we have
\begin{align}\label{eq:ksbound}d_{KS}(aX+b,\mcn(0,\tau))\lesssim |a^2-1|+|b|+d_{KS}(X,\mcn(0,\tau))
        \end{align}
    where the hidden constant only depends on $K$.
    \end{lemma}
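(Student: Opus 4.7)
My plan is to apply the triangle inequality with an auxiliary Gaussian $Z\sim\mcn(0,\tau)$ and split
$$d_{KS}(aX+b,\mcn(0,\tau))\le d_{KS}(aX+b, aZ+b)+d_{KS}(aZ+b,\mcn(0,\tau)).$$
Since $a>0$, the map $y\mapsto ay+b$ is a strictly increasing bijection on $\R$, so it preserves the Kolmogorov distance: $d_{KS}(aX+b, aZ+b) = d_{KS}(X,Z) = d_{KS}(X,\mcn(0,\tau))$, which is exactly the third term on the right-hand side of \eqref{eq:ksbound}.

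The remaining task is to bound $d_{KS}(aZ+b,\mcn(0,\tau))$. Noting that $aZ+b\sim\mcn(b,a^2\tau)$, this is a Gaussian-vs-Gaussian comparison. I would split it one more time through the intermediate law $\mcn(0,a^2\tau)$:
$$d_{KS}(\mcn(b,a^2\tau),\mcn(0,\tau))\le d_{KS}(\mcn(b,a^2\tau),\mcn(0,a^2\tau))+d_{KS}(\mcn(0,a^2\tau),\mcn(0,\tau)).$$
If $|a^2-1|\ge 1/2$ the claim is trivial since $d_{KS}\le 1$, so I may assume $a\in[1/\sqrt{2},\sqrt{3/2}]$. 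Because $\tau\in[K^{-1},K]$, the variance $a^2\tau$ is then bounded above and bounded away from $0$, with constants depending only on $K$. The mean-shift piece is controlled using $\|\phi\|_\infty=(2\pi)^{-1/2}$, giving $\sup_t|\Phi((t-b)/(a\sqrt{\tau}))-\Phi(t/(a\sqrt{\tau}))|\le |b|/(a\sqrt{2\pi\tau})\lesssim_K|b|$. The variance-shift piece $\sup_u|\Phi(u/a)-\Phi(u)|$ I would estimate by writing the difference as $u\int_1^{1/a}\phi(us)\,ds$ via the change of variable $x=us$, and then using $|us\phi(us)|\le 1/\sqrt{2\pi e}$ together with the fact that the integration range $[\min(1,1/a),\max(1,1/a)]$ is a compact subset of $(0,\infty)$ depending only on $K$; this yields a bound $\lesssim_K |1-1/a|\lesssim_K |a^2-1|$. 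Combining all three estimates gives \eqref{eq:ksbound}.

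There is no real obstacle here; the lemma is elementary. The only point that requires a moment of care is that $a$ is allowed to collapse to $0$, which would make $a^2\tau$ degenerate and the Lipschitz estimates for $\Phi$ blow up; this is precisely why one separates off the trivial regime $|a^2-1|\ge 1/2$, leaving only the range in which $a$ is bounded above and away from $0$ and the standard Gaussian comparison estimates apply cleanly.
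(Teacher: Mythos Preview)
Your proof is correct and follows essentially the same route as the paper: the same triangle-inequality split through $aZ+b$ with $Z\sim\mcn(0,\tau)$, the same observation that the affine map preserves $d_{KS}$, and then a Gaussian-vs-Gaussian comparison. The only difference is cosmetic: the paper outsources the bound $d_{KS}(\mcn(b,a^2\tau),\mcn(0,\tau))\lesssim_K |a^2-1|+|b|$ to a citation (Theorem~1.3 of Devroye et al.), whereas you supply an explicit elementary estimate, including the case split $|a^2-1|\ge 1/2$ to keep $a$ bounded away from $0$.
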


\begin{proof}[Proof of \cref{thm:CLT for u}]
\begin{enumerate}
    \item[(a)]

    Recall that $\bu$ satisfies the fixed point equation $u_i = \psi_i'( s_i+ c_i)$, where $s_i=\sumjn\A_n(i,j)u_j$ as in Lemma \ref{lem:uniqueness of optimizers}. By a Taylor expansion, we can write
    \begin{align*}
        u_i = \psi_i'(c_i) +  s_i \psi_i''(c_i) + e_i, ~\forall i,
    \end{align*}
    where $e_i:= \frac{s_i^2}{2} \psi_i'''( \xi_i + c_i)$, for some reals $\{\xi_i\}_{1\le i\le n}$. Then, with $\Psi'(\bc) = (\psi_1'(c_1), \ldots, \psi_n'(c_n))^{\red{\top}}$, we have
    \begin{align*}
        \bq^\top \bu &= \bq^\top \Psi'(\bc) +  \sumin s_i q_i \psi_i''(c_i) + \bq^\top \mathbf{e} \\
         &= \bq^\top \Psi'(\bc) +  \sumij\A_n(i,j) u_j q_i \psi_i''(c_i)   + \bq^\top \mathbf{e}\\
        &= \bq^\top \Psi'(\bc) +  \sumij\A_n(i,j) u_j q_i (\psi_i''(c_i) - \upsilon_n) +  \upsilon_n \sumjn u_j (\lambda_n q_j + \epsilon_j) + \bq^\top \mathbf{e},
    \end{align*}
    where in the last line above we use the identity $\sumin\A_n(i,j) q_i = \lambda_n q_j + \epsilon_j$. 
    By adjusting the terms, we see that 
    \begin{align}\label{eq:clt for optimizer}
        (1 -  \lambda_n \upsilon_n) \bq^\top \bu - \bq^\top \Psi'(\bc) =  \sumij\A_n(i,j) u_j q_i (\psi_i''(c_i) - \upsilon_n) + \bq^\top \mathbf{e} + \upsilon_n \bu^\top \boldsymbol{\epsilon}.
    \end{align}
    Since $1-\lambda_n\upsilon_n\ge 1-\rho > 0$, it suffices to bound the terms in the RHS of \eqref{eq:clt for optimizer}. Towards this direction, note that the first term can be controlled by writing
    \begin{align*}
        &\bigg|\sumij\A_n(i,j) u_j q_i (\psi_i''(c_i) - \upsilon_n)\bigg| \\
        \le & \bigg|\sumij\A_n(i,j) \psi_j'(c_j) q_i (\psi_i''(c_i) - \upsilon_n)\bigg| +\bigg| \sumjn (u_j - \psi_j'(c_j)) \left[\sumin\A_n(i,j) q_i (\psi_i''(c_i) - \upsilon_n)\right]\bigg| \\
        \le& R_{4n} + \lVert \bu - \Psi'(\bc)\rVert \sqrt{R_{1n}} \lesssim R_{4n} +  \sqrt{R_{1n} R_{2n}}.
    \end{align*}
    Here, the last inequality follows from using \cref{lem:optimizer u l2 concentration}(a) and noting that $$\lVert \bu - \Psi'(\bc)\rVert^2 \,=\lVert  \Psi'({\bf s}+\bc) - \Psi'(\bc)\rVert^2 \,\le\sumin s_i^2 \,\lesssim \sumin \nu_i^2 \, =\, R_{2n}.$$ 
    The second term on the right hand side of \eqref{eq:clt for optimizer} can be bounded using the Cauchy-Schwartz inequality and \cref{lem:optimizer u l2 concentration}(b) as follows: 
    $$\red{|\bq^\top \be|} \,\le \|\be\| \lesssim \sqrt{\sumin s_i^4} \, \lesssim \sqrt{\sumin \nu_i^4} \, = \sqrt{R_{3n}}\,.$$ The bound for the third term follows similarly by combining the Cauchy-Schwartz inequality with \cref{lem:optimizer u l2 concentration}(b), i.e.,
    $$\big|\bu^{\top}\boldsymbol{\epsilon}\big|\le \big|(\bu - \Psi'(\bc))^{\top}\boldsymbol{\epsilon}\big|+\big|\boldsymbol{\epsilon}^{\top} \Psi'(\bc)\big| \le \lVert \mathbf{s}\rVert \lVert \boldsymbol{\epsilon}\rVert + \big|\boldsymbol{\epsilon}^{\top} \Psi'(\bc)\big| \lesssim \sqrt{R_{2n}}\lVert \boldsymbol{\epsilon}\rVert + \big|\boldsymbol{\epsilon}^{\top} \Psi'(\bc)\big|.$$
    The proof of \eqref{eq:CLT for u} then follows by plugging these bounds in \eqref{eq:clt for optimizer}.    

\item[(b)]

Proceeding to verify part (b), {recalling that $|\lambda_n| \le \rho < 1$ and $\upsilon_n \le 1$, we have
$$1-\rho \le 1-\lambda_n \upsilon_n \le 1+\rho.$$
Also recall that we assume $\upsilon_n \ge \kappa$}.
Invoking Lemma \ref{lem:ksbound} with the choices $$X = \bq^\top(\boldsymbol{\sigma}-\bu), \quad\tau=\frac{\red{\upsilon_n}}{1-\lambda_n \upsilon_n}\in [\kappa(1-\rho),1+\rho],\quad  a=1,\quad  b=\bq^{\top}\left(\bu - \frac{\Psi'(\bc)}{1-\lambda_n \upsilon_n}\right)$$ gives
    \begin{align*}
        &\;\;\;\;d_{KS}\bigg(\bq^{\top}\bs-\frac{\bq^{\top}\Psi'(\bc)}{1-\lambda_n\upsilon_n} \, ,\, W_n\bigg) 
        \lesssim d_{KS}\bigg(\bq^{\top}(\bs-\bu) \, ,\, W_n\bigg)+
        \bigg| \bq^{\top}\left(\bu - \frac{\Psi'(\bc)}{1-\lambda_n \upsilon_n}\right)\bigg|, 
    \end{align*}
    where the hidden constants depend on $\kappa, \rho$.  
    Part (b) now follows on plugging-in the bounds in (i) \red{\eqref{eq:ksbd} in \cref{thm:clt}}, and (ii) part (a) of this theorem.
       \end{enumerate}
\end{proof}

\section{Proof of results in \cref{sec:isingrandom}}\label{sec:pfisingrandom}

\begin{proof}[Proof of \cref{cor:i.i.d. clt}]
Recall that $c_1, c_2, \ldots , c_n \overset{i.i.d.}{\sim}\, F$. We need to provide upper bounds to the RHS of  \eqref{eq:ksbd} and \eqref{eq:CLT for u}. %
We first claim that the conclusions of parts (a) and (b) both follow if we can show the following bounds:
\begin{equation}
\begin{aligned}\label{eq:suffice1}
\EE_F (\upsilon_n - \upsilon)^2 &= O(\|\bq\|_\infty^2), \quad \EE_F R_{1n}=O(\alpha_n+\|{\bm \epsilon}\|^2), \quad \EE_F R_{2n} =O(n\alpha_n), \\  \quad \EE_F R_{3n}&=O(n\alpha_n^2), \quad \EE_F R_{4n}=O(\sqrt{\an} + \|\bq\|_\infty), \quad \EE_F|{\bm \epsilon}^\top \Psi'({\bf c}) | =O(\|\bm \epsilon\|).
\end{aligned}
\end{equation}
We defer the proof of \eqref{eq:suffice1} to the end of the proof. \red{In \eqref{eq:suffice1} and throughout this proof, the implicit constants in $O(\cdot)$ and $\lesssim$ may depend on $\rho$ from \eqref{eq:mht} and $\upsilon$.}

\begin{enumerate}
    \item[(a)]

    To begin, setting $$T_n^*(\bs)=\sum_{i=1}^nq_i(\sigma_i-u_i)\quad \text{ and } \quad W_n\sim \mcn\Big(0,\frac{\upsilon_n}{1-\lambda \upsilon_n}\Big)$$ as before, 
 triangle inequality gives
\begin{align}\label{eq:triangle}
 d_{KS}(T_n^*(\bs), W_\infty\mid {\bf c})\le & d_{KS}(T_n^*(\bs),W_n\mid {\bf c})+d_{KS}(W_n,W_\infty\mid {\bf c}).
\end{align}
Using \cref{thm:clt}, on the set $\upsilon_n\ge \kappa := \upsilon/2$, the first term in the RHS of \eqref{eq:triangle} can be bounded as follows:
    \begin{align}\label{eq:qu1}
    d_{KS}(T_n^*(\bs),W_n\mid {\bf c})\lesssim \sqrt{R_{1n}}+\sqrt{\alpha_n R_{2n}}+\sqrt{R_{3n}}+\sqrt{n}\alpha_n+\|{\bf q}\|_\infty+\|{\bm \epsilon}\|=:S^{(1)}_n.
    \end{align}
    The second term in the RHS of \eqref{eq:triangle} can be bounded as follows:
\begin{align}\label{eq:qu2}
     d_{KS}(W_n,W_\infty\mid {\bf c})=d_{KS}\left(\mcn\Big(0, \frac{\upsilon_n}{1-\lambda \upsilon_n}\Big), \mcn\Big(0,\frac{\upsilon}{1-\lambda \upsilon}\Big)\Big|{\bf c}\right)
    \lesssim |\upsilon_n-\upsilon|\lesssim \|{\bf q}\|_\infty,
\end{align}
where we use Lemma \ref{lem:ksbound}, along with the fact that $1-\lambda \upsilon_n\in [ 1-\rho, 1+\rho]$. The desired conclusion follows by combining the above three displays, on using \eqref{eq:suffice1} to note that  $$\EE_F[S^{(1)}_n]\lesssim \textsc{Err}(\A_n, \bq),\quad \text{ and }\quad \PP(\upsilon_n\ge \kappa)\to 1. $$

\vspace{3mm}
\item[(b)]
For the first conclusion (quenched bound), using Lemma \ref{lem:ksbound} with the choices
$$X = \bq^{\top} \left(\bs-\frac{\Psi'(\bc)}{1-\lambda \red{\upsilon_n}}\right), ~ \tau=\frac{\red{\upsilon}}{1-\lambda \upsilon},~  a=1,~  b=\bq^{\top}\left(\red{\frac{\Psi'(\bc)}{1-\lambda \upsilon_n}} - \frac{\Psi'(\bc)}{1-\lambda \upsilon}\right)$$ 
gives
\begin{align}\label{eq:triangle2}
  \notag  &d_{KS}\left(\sum_{i=1}^nq_i\Big(\sigma_i-\frac{\psi'(c_i)}{1-\lambda \upsilon}\Big), W_\infty\mid {\bf c}\right)\\
  \notag  \lesssim& d_{KS}\left(\sum_{i=1}^nq_i\Big(\sigma_i-\frac{\psi'(c_i)}{1-\lambda \upsilon_n}\Big),W_\infty\mid {\bf c}\right)+\left|\sum_{i=1}^nq_i\psi'(c_i)\Big(\frac{1}{1-\lambda \upsilon}-\frac{1}{1-\lambda \upsilon_n}\Big)\right|\\
    \lesssim &S^{(2)}_n+\left|(\upsilon_n-\upsilon)\sum_{i=1}^nq_i\psi'(c_i)\right|,
\end{align}
where in the last inequality we use \cref{thm:CLT for u} part (b), and restrict ourselves to the set $\upsilon_n\ge \kappa = \upsilon/2$,  and 
\begin{align*}
    S^{(2)}_n:=\sqrt{R_{2n}}\left(\sqrt{R_{1n}} + \sqrt{\alpha_n} + \lVert \boldsymbol{\epsilon}\rVert\right) + \sqrt{R_{1n}} +  \sqrt{R_{3n}}  + \sqrt{n} \an + \lVert \bq\rVert_{\infty}+ R_{4n} +\big|\boldsymbol{\epsilon}^{\top}\Psi'(\bc)\big|+\red{\|\boldsymbol{\epsilon}\|}.
\end{align*}
Using \eqref{eq:suffice1} we have $|\upsilon_n-\upsilon|\lesssim \|{\bf q}\|_\infty$, and Lindeberg-Feller CLT gives $\sum_{i=1}^nq_i\psi'(c_i)=O_P(1)$. Finally, again using \eqref{eq:suffice1} gives $$\EE_F[S^{(2)}_n]\lesssim \textsc{Err}(\A_n, \bq)+\sqrt{n\alpha_n}\|{\bm \epsilon}\|.$$
Combining the above, it follows that
$$d_{KS}\left(\sum_{i=1}^nq_i\Big(\sigma_i-\frac{\psi'(c_i)}{1-\lambda \upsilon}\Big), W_\infty  \red{\mid {\bf c}} \right)=O_P(\textsc{Err}(\A_n, \bq)+\sqrt{n\alpha_n}\|{\bm \epsilon}\|),$$
as desired.
\\

 Proceeding to verify the second (i.e. annealed) conclusion of part (b), let $(W_\infty,\widetilde{W}_\infty)$ be mutually independent Gaussians, coupled in the same space as $(\bs, {\bf c})$ via an independent coupling. Then, setting $W^*_n:=\frac{1}{1-\lambda\upsilon}\sum_{i=1}^nq_i\psi'(c_i)$, triangle inequality gives
 \begin{align*}%
\notag d_{KS}\left(\sum_{i=1}^nq_i\sigma_i,W_\infty+\widetilde{W}_\infty\right)\le &
d_{KS}\left(\sum_{i=1}^nq_i\sigma_i,W_\infty+W_n^*\right)+d_{KS}\left(W_\infty+W_n^*,W_\infty+\widetilde{W}_\infty\right)\\
 \le & \EE_F \left[d_{KS}\left(\sum_{i=1}^nq_i\sigma_i,W_\infty+W_n^*\mid {\bf c}\right)\right]+d_{KS}\left(W_n^*,\widetilde{W}_\infty\right),
 \end{align*}
 where in the last inequality we use Jensen's inequality, and the fact that $W_n^*, W_\infty, \widetilde{W}_\infty$ are mutually independent. Since given ${\bf c}$ the random variable $W_n^*$ is a constant, the first term in the RHS above can be bounded as follows:
\begin{align*}d_{KS}\left(\sum_{i=1}^nq_i\sigma_i,W_\infty+W_n^*\mid {\bf c}\right)=&d_{KS}\left(\sum_{i=1}^nq_i\sigma_i-W_n^*,W_\infty\mid {\bf c}\right)\\
=&d_{KS}\left(\sum_{i=1}^nq_i\Big(\sigma_i-\frac{\psi'(c_i)}{1-\lambda \upsilon}\Big),W_\infty\mid {\bf c}\right)\\
\lesssim & S_n^{(2)}+\left|(\upsilon_n-\upsilon)\sum_{i=1}^nq_i\psi'(c_i)\right|,
\end{align*}
where the last inequality holds on the set $\upsilon_n\ge \kappa = \upsilon/2$, and we have used \eqref{eq:triangle2}. Noting that $d_{KS}$ is always bounded by $1$, the above two displays give
\begin{align}\label{eq:triangle3}
    d_{KS}(\sum_{i=1}^nq_i\sigma_i,W_\infty+\widetilde{W}_\infty)\lesssim \EE_F S_n^{(2)}+\EE_F \left|(\upsilon_n-\upsilon)\sum_{i=1}^nq_i\psi'(c_i)\right|+\PP(\upsilon_n<\kappa)+d_{KS}\left(W_n^*,\widetilde{W}_\infty\right).
\end{align}
Using \eqref{eq:suffice1} we can bound the first two terms in the RHS of \eqref{eq:triangle3} as follows:
\begin{align*}
 \EE_F S_n^{(2)}\lesssim & \textsc{Err}(\A_n, \bq)+\sqrt{n\alpha_n}\|{\bm \epsilon}\|,\\
 \EE_F\left|(\upsilon_n-\upsilon)\sum_{i=1}^nq_i\psi'(c_i)\right|\le &\sqrt{\EE_F(\upsilon_n-\upsilon)^2 }\sqrt{\EE_F \Big[\sum_{i=1}^n q_i\red{\psi'}(c_i)\Big]^2}\lesssim \|{\bf q}\|_\infty.
 \end{align*}
 For the third term, Markov's inequality gives
 \begin{align*}
     \PP(\upsilon_n<\kappa)\le \PP(|\upsilon_n-\upsilon|>\upsilon/2)\lesssim \EE_F |\upsilon_n-\upsilon|\lesssim \|{\bf q}\|_\infty,
 \end{align*}
 where the last bound again uses \eqref{eq:suffice1}. Finally, the fourth term in the RHS of \eqref{eq:triangle3} can be bounded using the standard Berry-Esseen CLT for i.i.d. random variables, which gives
$$d_{KS}\Big(\frac{\sumin q_i \psi'(c_i)}{1-\lambda \upsilon}, \widetilde{W}_\infty \Big) \lesssim \sumin |q_i|^3 \le \|\bq\|_\infty.$$
The desired annealed bound follows on combining the last four bounds, along with \eqref{eq:triangle3}.
\end{enumerate}
\vspace{3mm}
\noindent \emph{Proof of \eqref{eq:suffice1}.}
First we bound $\upsilon_n - \upsilon$. We use the independence of $\bc=(c_1,\ldots,c_n)^\top$ and the definition of $\upsilon = \EE_F [\psi''(c_1)]>0$ to get
    \begin{align}\label{eq:latecall}
    \EE_F(\upsilon-\upsilon_n)^2=\mbox{Var}(\psi''(c_1))\sumin q_i^4 \le \sumin q_i^4 \le \|\bq\|_\infty^2.
    \end{align}

Next, we bound $R_{1n}$. Similarly, using the independence of $\bc$, we have
    \begin{align*}
        \EE_F R_{1n} &= \sumin \EE_F \left(\sumjn \A_n(i,j) q_j (\psi''(c_j) - \upsilon_n) \right)^2 \\ 
        &\le 2\EE_F \sumin \left(\sumjn \A_n(i,j) q_j (\psi''(c_j) - \upsilon) \right)^2 + 2\sumin \left(\sumjn \A_n(i,j) q_j \right)^2 \EE_F(\upsilon - \upsilon_n)^2 \\
        &=2\Var(\psi''(c_1)) \sumin \sumjn \A_n(i,j)^2 q_j^2 + 2\lVert \A_n \bq \rVert^2 \EE_F(\upsilon - \upsilon_n)^2 
    \end{align*}
    For the first term above, we note that $\sumin \sumjn \A_n(i,j)^2 q_j^2 \le \an$. For the second term, using the bound $\|\A_n \bq\|^2=\|\boldsymbol{\epsilon}+\lambda \bq\|^2\le 2\|\boldsymbol{\epsilon}\|^2+2\lambda^2$ alongside \eqref{eq:latecall} gives
    \begin{align}
        \lVert \A_n \bq \rVert^2 \EE_F(\upsilon - \upsilon_n)^2 &\le 2\lVert\boldsymbol{\epsilon}\rVert^2\sumin q_i^4 + 2\sumin (\lambda q_i)^2 q_i^2 \nonumber \\ & \le 2\lVert \boldsymbol{\epsilon}\rVert^2 + 4\sumin \epsilon_i^2 q_i^2 + 4\sumin \left(\sumjn \A_n(i,j)q_j\right)^2 q_i^2\le 6\lVert \boldsymbol{\epsilon}\rVert^2+4\alpha_n.\label{eq:upsilon_n bound}
    \end{align}
    The bounds for $R_{2n}$ and $R_{3n}$ also follow similarly by explicit computations leveraging the independence of $\bc$ and $\EE_F[ \psi'(c_1)]=0$, as follows: %

    \begin{align*}
        \EE_F R_{2n} =\sum_{i=1}^n \EE_F\left(\sum_{j=1}^n \A_n(i,j)\psi'(c_j)\right)^2 =  \EE_F [\psi'(c_1)]^2\sum_{i,j=1}^n \A_n^2(i,j) \le n \alpha_n.
    \end{align*}

    \begin{align*}
        \EE_F R_{3n} = \sum_{i=1}^n \EE_F\left(\sum_{j=1}^n \A_n(i,j)\psi'(c_j)\right)^4 \lesssim \sum_{i=1}^n \left(\sum_{j=1}^n \A_n^2(i,j)\right)^2 \le n \alpha_n^2.
    \end{align*}
    
Next we focus on $R_{4n}$ first. By the triangle inequality, we have 
    $$R_{4n} \le \bigg|\sum_{i,j=1}^n \A_n(i,j)q_i(\psi''(c_i)-\upsilon)\psi'(c_j)\bigg|+\bigg|(\upsilon-\upsilon_n)\sum_{i,j=1}^n \A_n(i,j)q_i\psi'(c_j)\bigg|. $$
    We will show that the second moment of the both terms converge to $0$. For the first term,  we have %
    \begin{align*}
        &\;\;\;\;\EE_F\Big[\sumij \A_n(i,j) q_i (\psi''(c_i) - \red{\upsilon}) \psi'(c_j)\Big]^2 
        \\ &= \sum_{i,j,k,l} \A_n(i,j) \A_n(k,l) q_i q_k  \EE_F \left[(\psi''(c_i) - \upsilon)(\psi''(c_k) -\upsilon) \psi'(c_j) \psi'(c_l) \right] \\
         &\lesssim \sum_{i\neq j} \A_n(i,j)^2 \bigg[q_i^2 \EE_F \left[(\psi''(c_i) - \upsilon)^2 \psi'(c_j)^2 \right]+|q_i q_j| \EE_F \red{\Big|} \psi'(c_i) (\psi''(c_i)-\upsilon)\psi'(c_j) (\psi''(c_j)-\upsilon)\red{\Big|}  \bigg]\\
         &\lesssim  \sum_{i\neq j} \A_n(i,j)^2 \bigg[q_i^2 +|q_i q_j|\bigg]
         \le 2\alpha_n.
    \end{align*}
    For the second term, use the MHT condition \eqref{eq:mht} along with Lemma \ref{lem:normorder} to note that $\|\A_n\|\le 1$, and so,
    \begin{align*}
        \EE_F\bigg[(\upsilon-\upsilon_n)\sum_{i,j=1}^n \A_n(i,j)q_i\psi'(c_j)\bigg]^2 &\le {\EE_F(\upsilon-\upsilon_n)^2}{\EE_F\left(\sum_{i,j=1}^n \A_n(i,j)q_i\psi'(c_j)\right)^2}\\ &\le {\sumin q_i^4}\lVert \A_n\bq\rVert^2 \le \lVert \bq\rVert_{\infty}^2\to 0.
    \end{align*}
    The next term to bound is $|\boldsymbol{\epsilon}^{\top}\Psi'(\bc)|$. 
    Once again, the independence of $\bc$ implies that 
    $$\EE_F \big|\boldsymbol{\epsilon}^{\top}\Psi'(\bc)\big|\le \sqrt{\EE_F\left(\sum_{i=1}^n \epsilon_i \psi'(c_i)\right)^2} \le \lVert \boldsymbol{\epsilon}\rVert.$$
    This completes the proof.
\end{proof}

In the proof of the applications that follow, we only need to verify the approximate eigenvector-eigenvalue pair condition $\|{\bf A}_n{\bf q}-\lambda {\bf q}\|=o\Big(\frac{1}{\sqrt{n\alpha_n}}\Big)$, the SHT condition \eqref{eq:sht} (stronger than MHT condition) and $\alpha_n\ll n^{-1/2}$. The desired conclusions then follow from~\cref{cor:i.i.d. clt}.
\begin{proof}[Proof of \cref{lem:ER}]
 Note that $d_i$, the degree of vertex $i$, follows a $Bin(n-1, p_n)$ distribution. Thus
    $$\|{\bf A}_n\|_\infty=\frac{|\theta|}{np_n}\max_{i\in [n]}d_i\stackrel{P}{\to}|\theta|$$ for $p_n\gg n^{-1/2}$ by using standard concentration bounds (such as Hoeffding's inequality), and so the SHT condition holds for $\theta\in (-1,1)$. The above display also gives $$np_n \alpha_n =\red{\theta^2} \max_{i\in [n]}\frac{d_i}{np_n}\stackrel{P}{\to} \red{\theta^2},$$
    and so $\alpha_n=O_P\Big(\frac{1}{{np_n}}\Big) = \red{o_P(n^{-1/2})}$.
    
    \begin{enumerate}[(a).]
    \item 
    With $\bq=n^{-1/2}\mathbf{1}$ and $\lambda=\red{\theta}$, we have $(\A_n \bq - \lambda \bq)_i = \frac{\theta}{\sqrt{n}} \Big(\frac{d_i}{n p_n}-1 \Big)$, we have
    $$\EE\lVert \A_n \bq - \lambda \bq \rVert^2 =  \frac{\theta^2}{n} \sumin \EE\Big(\frac{d_i}{n p_n} - 1\Big)^2 \lesssim \frac{1}{n p_n}.$$
    Therefore, 
    $$\sqrt{n\alpha_n}\lVert \A_n\bq-\lambda \bq\rVert=O_P\left(\sqrt{\frac{\alpha_n}{p_n}}\right)=O_P\left(\frac{1}{\sqrt{n}p_n}\right)\to 0.$$

    \item Now, suppose $\bq$ satisfies \eqref{eq:contrastcon}. For simplicity of notation, set $H_i := \sumjn \G_n(i,j) q_j$, and note that $H_i$ has mean $p_n\sum_{i=1}^n q_i$ and variance $p_n(1-p_n)$. Noting that $(\A_n \bq - \lambda \bq)_i = \frac{\theta H_i}{n p_n}$ for %
    $\lambda=0$, we have
     \begin{align*}
        \EE\lVert \A_n \bq - \lambda \bq \rVert^2 =  \frac{\theta^2}{n^2 p_n^2} \sumin \EE H_i^2 \le \frac{1}{n p_n} + \frac{1}{n}\bigg(\sum_{i=1}^n q_i\bigg)^2.
    \end{align*}
    Therefore,
    $$\sqrt{n\alpha_n}\lVert \A_n\bq-\lambda \bq\rVert = O_P\left(\sqrt{\frac{\alpha_n}{p_n}}+\sqrt{\alpha_n}\bigg|\sum_{i=1}^n q_i\bigg|\right)=O_P\left(\frac{1}{\sqrt{n}p_n}+\frac{1}{\sqrt{np_n}}\bigg|\sum_{i=1}^n q_i\bigg|\right)\to 0.$$
\end{enumerate}
This completes the proof.
\end{proof}

\begin{proof}[Proof of \cref{cor:ER multidimensional}]
    For simplicity, we only show the convergence of two dimensional marginals for part (a) of \cref{cor:ER multidimensional}. The argument directly generalizes to part (b), and we omit the details.
    
    Write $\upsilon := \EE \psi''(c_1)$. By the Cramér–Wold device, it suffices to show that for any $a, b \in \R$ and $Q_1,Q_2\in \mathcal{Q}$,
    $$a\sqrt{n}\langle Q_1,g_{\bs}-g_{
    {\bf u}}\rangle + b\sqrt{n} \langle Q_2,g_{\bs}-g_{
    {\bf u}}\rangle \mid \bc \xd \mcn \Big(0, \upsilon \underbrace{(a^2 + 2ab \int_0^1 Q_1(x) Q_2(x) + b^2)}_{:= F_{a,b}(Q_1, Q_2)}\Big).$$
    Setting $Q(x) := \frac{a Q_1(x) + b Q_2(x) }{\sqrt{F_{a,b}(Q_1, Q_2)}}$ we then have $Q\in \mathcal{Q}$, and
   $$ \sqrt{n}\langle Q,g_{\bs}-g_{
    {\bf u}}\rangle = a\frac{ \sqrt{n}\langle Q_1,g_{\bs}-g_{
    {\bf u}}\rangle}{\sqrt{F_{a,b}(Q_1, Q_2)}}+ b\frac{ \sqrt{n}\langle Q_2,g_{\bs}-g_{
    {\bf u}}\rangle}{\sqrt{F_{a,b}(Q_1, Q_2)}}.$$
  It thus suffices to show that for all $Q\in \mathcal{Q}$ we have
  $$\sqrt{n}\langle Q,g_{\bs}-g_{
    {\bf u}}\rangle\mid{\bf c}\stackrel{d}{\to}N(0,\upsilon).$$
   Setting  
    $$q_i := \sqrt{n} \int_{I_i}Q(x)dx\qquad\text{ where }\qquad I_i := \Big(\frac{i-1}{n}, \frac{i}{n}\Big],$$
    the above is equivalent to the claim
$$\sum_{i=1}^nq_i(\sigma_i-u_i)\mid{\bf c}\stackrel{d}{\to}N(0,\upsilon).$$
This follows by invoking part (b) of \cref{lem:ER}, for which we need to verify that ${\bf q}$ satisfies \eqref{eq:contrastcon}, part (b) of \cref{lem:ER} implies the desired convergence. We check each parts of  \eqref{eq:contrastcon} below.
    \begin{itemize}[itemsep=1em]
        \item 
       $\sumin q_i^2 \to 1$
 \\      
       
       Let $U$ be a random variable which is uniform on $(0,1]$, and let $\mathcal{F}_n$ be a $\sigma$-field defined by $\mathcal{F}_n := \sigma(\{U \in I_i: i \in [n] \})$. Then, setting $$Y_{n} := \EE[Q(U) \mid \mathcal{F}_n]=1\{U\in I_i\}n\int_{I_i}Q(x)dx=1\{U\in I_i\}\sqrt{n}q_i,$$  by Lévy's zero-one law (e.g. Theorem 4.6.8 in \cite{durrett2019probability}) we get $Y_n \stackrel{L^2}{\to} Q(U)$, and consequently 
        $$\sum_{i=1}^nq_i^2=\EE Y_n^2 \to \EE Q(U)^2 = \int_0^1 Q(x)^2 dx = 1,$$
        as desired. Note that while \eqref{eq:contrastcon} requires the equality $\sumin q_i^2 = 1$, this can be achieved by re-normalizing $\bq$ without changing the limiting variance.

        \item $\sumin q_i =0$
        \\

        This follows on noting that $\sum_{i=1}^nq_i=
        \sqrt{n} \int_0^1 Q(x) dx = 0$.

        \item $\|\bq\|_\infty \to 0$
        \\
        
        This follows on noting that
        $$\|\bq\|_\infty = \maxin \sqrt{n} \int_{I_i} Q(x) dx \le \maxin \sqrt{\int_{I_i} Q(x)^2 dx} \to 0,$$
        where the last limit follows on using DCT.
    \end{itemize}
\end{proof}

\begin{proof}[Proof of \cref{cor:reg}]
It is immediate to check that $\alpha_n= \red{\theta^2/d(n)}$ and hence $\sqrt{n}\alpha_n = \red{\theta^2 \sqrt{n}/d(n)} \to 0$. Also we have
$\|{\bf A}_n\|_\infty =|\theta|$, and so the SHT condition holds for $\theta\in (-1,1)$.
    \begin{enumerate}[(a).]
        \item In this case follows from \cref{cor:i.i.d. clt} by noting that $(\bq,\lambda_n)=(n^{-1/2}\mathbf{1},\red{\theta})$ forms an exact eigenvector-eigenvalue pair.

        \vspace{0.02in}
        
        \item In this case, taking the pair $(\bq,0)$ we have
        $$\sqrt{n\alpha_n}\lVert \A_n \bq \rVert \le \sqrt{\frac{n}{d(n)}}\left(\lVert \A_n - n^{-1}\mathbf{1}\mathbf{1}^{\top}\rVert\, +\, \frac{|\sum_{i=1}^n q_i|}{\sqrt{n}}\right)\lesssim \frac{\sqrt{n}}{d(n)}+\frac{|\sum_{i=1}^nq_i|}{\sqrt{d(n)}} = o(1).$$
    \end{enumerate}
\end{proof}

\begin{proof}[Proof of \cref{cor:graphon}]
    The result follows on invoking parts (a) and (b) of \cref{cor:i.i.d. clt}. We show below that the conditions of \cref{cor:i.i.d. clt} hold, and the error terms converge to $0$.

    \begin{itemize}
\item{\bf MHT condition \eqref{eq:mht}}
\\

Recall from \cref{lem:normorder} that $\|\A_n\|_4\le \|\A\|_\infty$, and so it suffices to show the stronger SHT condition \eqref{eq:sht}. To this effect, using Bernstein's inequality for bounded random variables (e.g. Theorem 2.8.4 in \cite{vershynin2018high}) alongside the fact that $$
   \EE[ \sumjn \G_n(i,j)\mid\mathbf{U}]= \sum_{j=1}^n\frac{f(U_i, U_j)}{n^\gamma} \le n^{1-\gamma},\quad \Var[\sumjn \G_n(i,j)\mid \mathbf{U}] \le n^{1-\gamma},$$ for any $\delta>0$ we have
    \begin{align}\label{eq:bernstein}
        \notag\PP(\|\G_n\|_\infty > (1+\delta)n^{1-\gamma}) &= \PP\Big(\maxin  \sumjn \G_n(i,j) > (1+\delta) n^{1-\gamma} \Big) \\
        &\le 2n\exp\left[ - \frac{\delta^2}{2(1+\delta/3)}n^{1-\gamma}\right] = o(1).
    \end{align}
    Consequently, we have
    \begin{align*}
        \|\A_n\|_\infty=\frac{|\theta|}{n^{1-\gamma}}\|\G_n\|_\infty\le |\theta|,
    \end{align*}
    where the last inequality holds with high probability using \eqref{eq:bernstein}. Thus we have verified the SHT condition \eqref{eq:sht} if $|\theta|<1$.
\\

    \item{\bf SMF condition \eqref{eq:smf}}
    \\

    Using the fact that $\G_n(i,j)\in \{0,1\}$, we have
    \begin{align*}
        \an = \maxin \sumjn \A_n(i,j)^2 = \maxin \frac{\theta^2}{n^{2-2\gamma}} \sumjn \G_n(i,j) \le  \frac{\theta^2\red{(1+\delta)}}{n^{1-\gamma}},
    \end{align*}
    where the last step holds with high probability using \eqref{eq:bernstein}. Since $\gamma<1/2$, we have shown that $\sqrt{n}\alpha_n\stackrel{P}{\to}0$, and so the SMF condition \eqref{eq:smf} holds.
    \\
    
   \item
    Next, we check each part of \eqref{eq:standassn}, where we take $q_i := \frac{Q(U_i)}{\sqrt{n}}$. %
    \begin{itemize}[itemsep=1em]
        \item $\sumin q_i^2 \to 1$ \\

        \noindent This follows as $\sumin q_i^2 = \frac{1}{n} \sumin Q(U_i)^2 \xp \EE Q(U_1)^2 =1.$ While \eqref{eq:standassn} requires the equality $\sumin q_i^2 = 1$, this can be achieved by re-normalizing $\bq$ without changing the limiting variance.
        
        \item $\EE\|\A_n \bq - \tilde{\lambda} \bq\|^2 =O\Big(\frac{1}{n^{1-\gamma}}\Big)$ for $\tilde{\lambda} := \theta \lambda$ \\
        
        \noindent A direct expansion gives
        $$\EE\|\A_n\bq - \tilde{\lambda} \bq\|^2 = \EE\Big[\sumin (\sumjn \A_n(i,j) q_j)^2\Big] -2 \tilde{\lambda} \EE \Big[\sum_{i,j} \A_n(i,j)q_iq_j \Big] + \tilde{\lambda}^2 \EE \|\bq\|^2.$$ 
        We simplify each summand. For the third term we have $\tilde{\lambda}^2 \EE \|\bq\|^2= \tilde{\lambda}^2$.
        Proceeding to estimate the second term, direct computations using tower property and the assumption that $Q$ is an eigenfunction of \red{$f$} gives
        \begin{align*}
            \EE\Big[\sum_{i,j} \A_n(i,j)q_iq_j \Big] &= \frac{\theta}{n^{2}}\sum_{i\neq j} \EE[f(U_i, U_j) Q(U_i) Q(U_j)] = \frac{\tilde{\lambda} (n-1)}{n}.
        \end{align*}
        Hence, the middle term is $2 \tilde{\lambda}^2 + O\Big(\frac{1}{n}\Big).$
        Finally, to compute the first term, write
        \begin{align*}
            \EE\Big[(\sumjn \A_n(i,j) q_j)^2 \mid \mathbf{U}\Big] &= \Big[\EE [\sumjn \A_n(i,j) q_j \mid \mathbf{U}] \Big]^2 + \Var\Big[\sumjn \A_n(i,j) q_j \mid \mathbf{U} \Big] \\
            &= \frac{\theta^2}{n^2} \Big[\sum_{j \neq i} f(U_i,U_j) q_j\Big]^2+ O\Big(\frac{\red{\sumjn q_j^2}}{n^{2-\gamma}}\Big).
        \end{align*}
        Taking a conditional expectation given $U_i$ on both sides we get         \begin{align*}
            &\EE\Big[(\sumjn \A_n(i,j) q_j)^2 \mid U_{i} \Big] \\
            =& \frac{\theta^2}{n^2} \Big[\EE \big[\sum_{j \neq i} f(U_i,U_j) q_j \mid U_i\big] \Big]^2 + \frac{\theta^2}{n^2} \Var\Big[\sum_{j \neq i} f(U_i,U_j) q_j \mid U_i \Big] + O\Big(\frac{\red{\sumjn q_j^2}}{n^{2-\gamma}}\Big) \\
            =& \frac{\red{\theta^2}}{n^2} \Big[\frac{n-1}{\sqrt{n}} \lambda Q(U_i) \Big]^2 +  O\Big(\frac{1}{n^2} \Big) + O\Big(\frac{\red{\sumjn Q(U_j)^2}}{n^{3-\gamma}}\Big).
        \end{align*}
        Here, the last line again uses the fact that $Q$ is the eigenfunction of \red{$f$}, along with the fact that $q_j=n^{-1/2} Q(U_j)$. Finally, by taking the expectation over $\mathbf{U}$ and summing over $i$, we have
        $$\EE\Big[\sumin (\sumjn \A_n(i,j) q_j)^2 \Big] = \frac{\theta^2\lambda^2(n-1)^2}{n^3}\EE\Big[\sum_{i=1}^nQ^2(U_i)\Big]+O\Big(\frac{1}{n^{1-\gamma}}\Big)=\tilde{\lambda}^2+O\Big(\frac{1}{n^{1-\gamma}}\Big).$$%
       Combining the above bounds we have $\EE\|\A_n\bq - \tilde{\lambda} \bq\|^2 = O\Big(\frac{1}{n^{1-\gamma}}\Big)$, as claimed.
        
        \item $\|\bq\|_\infty \to 0$ \\
        
        \noindent It follows from the usual law of large numbers that for i.i.d. random variables $Y_i$ with finite mean, we have $\frac{\maxin Y_i}{n} \xrightarrow{P} 0$. Noting that $\EE Q^2(U_i)=1$, this gives $$\|\bq\|_\infty^2=\frac{1}{n}\max_{1\le i\le n}|Q^2(U_i)| \stackrel{P}{\to}0.$$
    \end{itemize}    

   Having verified all its conditions, the CLTs now follow from \cref{cor:i.i.d. clt}. The additional condition in part (b) of \cref{cor:i.i.d. clt} also follows from the above bounds, as
    $$n \an \|\A_n\bq- \red{\tilde{\lambda}} \bq\|^2 = O_P\left(\frac{n}{n^{2(1-\gamma)}} \right) = o_P(1),$$
    where the last step in the above display uses the fact that $\gamma<\frac{1}{2}$.
    Note that while we have established all bounds while marginalizing out $\mathbf{U}$, the analogous claims conditional on $\mathbf{U}$ follow from this.
    \end{itemize}
\end{proof}

To prove \cref{lem:hopfield condition checking}, we require the following lemmas to check $\sqrt{n} \an = o_P(1)$ as well as the MHT condition. Note that unlike the previous examples in the context of graphs, bounding the norm $\|\A_n\|_{4}$ by $\|\A_n\|_{\infty}$ is suboptimal for spin glasses. We postpone the proofs of these lemmas to \cref{sec:pfauxlem}. %

\begin{lemma}\label{lem:L_n norm}
    Let $\mathbf{Z}$ be a $N \times n$ random matrix whose entries are independent sub-Gaussian random variables with mean zero and variance $1$, and sub-Gaussian norm bounded by a constant. Let $\M_n := \frac{1}{N} \mathbf{Z}^\top \mathbf{Z}$, and let $\G_n$ be the adjacency matrix of a graph.
    Suppose $N \ge n$.
    For $\A_n(i,j) := \theta\M_n(i,j)\G_n(i,j)$ and $r \in [2, \infty]$, the following bound holds:
    \begin{equation}\label{eq:l_n norm bound}
        \|\A_n\|_{r} = O_P\Big(\frac{n^{1-1/r}}{\sqrt{N}}\Big).
    \end{equation}
    While not necessary for our analysis, we also have $\|\A_n\|_{r} = O_P\Big(\frac{n^{1/r}}{\sqrt{N}}\Big)$ for $r \in [1,2)$.
\end{lemma}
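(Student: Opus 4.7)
The plan is to reduce the whole claim to just two operator-norm estimates and then invoke Riesz--Thorin interpolation to fill in the intermediate values of $r$.

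First, I would show $\|\A_n\|_{2\to 2} = O_P(\sqrt{n/N})$. Decompose
$$\A_n \;=\; \Big(\tfrac{1}{N}\mathbf{Z}^\top \mathbf{Z} - \I_n\Big) \;-\; \Big(\tfrac{1}{N}\mathrm{Diag}(\mathbf{Z}^\top \mathbf{Z}) - \I_n\Big).$$
The first summand is a centered sub-Gaussian sample covariance matrix, whose $(2,2)$ operator norm is $O_P(\sqrt{n/N})$ by standard high-dimensional probability results (when $n \le N$). The second is diagonal with entries $\frac{1}{N}\sum_{k=1}^N (Z_{ki}^2 - 1)$, each a centered sub-exponential average whose absolute value is $O_P(\sqrt{\log n/N})$ by Bernstein together with a union bound over $i \in [n]$, hence of smaller order than $\sqrt{n/N}$.

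Next I would bootstrap to $\|\A_n\|_{\infty\to\infty} = O_P(n/\sqrt{N})$ by a row-wise Cauchy--Schwarz argument. Since $\A_n$ is symmetric, $\|\A_n(i,\cdot)\|_2 = \|\A_n e_i\|_2 \le \|\A_n\|_{2\to 2}$ for every $i$, and Cauchy--Schwarz gives $\|\A_n(i,\cdot)\|_1 \le \sqrt{n}\,\|\A_n(i,\cdot)\|_2$; maximizing over $i$ yields
$$\|\A_n\|_{\infty\to\infty} \;\le\; \sqrt{n}\,\|\A_n\|_{2\to 2} \;=\; O_P(n/\sqrt{N}).$$

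Finally, Riesz--Thorin interpolation between $p = 2$ and $p = \infty$ with $\theta = 1 - 2/r$ gives, for $r \in [2,\infty]$,
$$\|\A_n\|_{r\to r} \;\le\; \|\A_n\|_{2\to 2}^{2/r}\,\|\A_n\|_{\infty\to\infty}^{1-2/r} \;=\; O_P\bigl((n/N)^{1/r}(n/\sqrt{N})^{1-2/r}\bigr) \;=\; O_P\bigl(n^{1-1/r}/\sqrt{N}\bigr),$$
which is the displayed bound. For the supplementary claim on $r \in [1,2)$, symmetry of $\A_n$ gives $\|\A_n\|_{1\to 1} = \|\A_n\|_{\infty\to\infty}$ by duality, and interpolating between $r = 1$ and $r = 2$ with $\theta = 2 - 2/r$ yields $\|\A_n\|_{r\to r} = O_P(n^{1/r}/\sqrt{N})$. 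The main subtle point is the bootstrap in the second step: a naive union bound on $\max_{i\ne j}|\A_n(i,j)|$ would produce an extra $\sqrt{\log n}$ factor which would then propagate through the interpolation; the row-wise Cauchy--Schwarz trick is what keeps the bound log-free and matches the claimed rate uniformly in $r$, including at $r = \infty$.
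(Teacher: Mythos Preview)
Your argument is correct and takes a genuinely different route from the paper's proof. The paper proceeds ``from scratch'': it writes $\|\A_n\|_{r\to r}=\sup_{\|\bv\|_r=1,\,\|\bw\|_{r^\star}=1}\bv^\top\A_n\bw$, constructs $\varepsilon$-nets on the $\ell^r$- and $\ell^{r^\star}$-spheres, applies a Hanson--Wright moment bound to each bilinear form $\bv^\top\A_n\bw=\frac{1}{N}\sum_k \mathbf{Z}_k^\top\mathbf{B}_n\mathbf{Z}_k$, and finishes with a union bound over the $9^{2n}$ net points. Your approach instead pins down the two endpoints $r=2$ (via the standard sample-covariance operator-norm bound plus a diagonal correction) and $r=\infty$ (via the clean deterministic inequality $\|\A_n\|_{\infty\to\infty}\le\sqrt{n}\,\|\A_n\|_{2\to 2}$, which follows from $\|\A_n e_i\|_1\le\sqrt{n}\,\|\A_n e_i\|_2\le\sqrt{n}\,\|\A_n\|_{2\to 2}$), and then interpolates by Riesz--Thorin. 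The endpoint-plus-interpolation strategy is shorter and relies only on off-the-shelf results; in particular, your Step~2 bootstrap is the key observation that makes the whole argument log-free without any netting. The paper's netting argument, by contrast, is self-contained (it does not import the sample-covariance spectral bound as a black box), produces an explicit exponential tail bound $\PP(\|\A_n\|_{r\to r}>\Delta n^{1-1/r}/\sqrt{N})\le 9^{2n}e^{-\Delta n+K_0 n}$ uniformly in $r$, and is closer in spirit to Chevet-type inequalities, which may make it easier to adapt to other random-matrix models.
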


\begin{remark}
    This result resembles Chevet's inequality \citep{chevet1978series, latala2024operator}, which provides similar $r \to r$ norm bounds for the case when $\A_n$ has i.i.d. entries. Note that taking $\G_n(i,j) = I(i \neq j)$ in \cref{lem:L_n norm} implies that $\A_n$ is the off-diagonal part of the sample covariance matrix, and one can easily conclude that
    $$\|\M_n - \I_n\|_{r} = O_P\Big(\frac{n^{1-1/r}}{\sqrt{N}}\Big)$$
    for $r \ge 2$ and $N \ge n$.
    We are not aware of analogous results for covariance matrices, and this lemma may be of independent interest.
\end{remark}

\begin{lemma}\label{lem:alpha_n}
    Define $\mathbf{A}_n$ as in \cref{lem:L_n norm}. Under the asymptotic scaling $N \gg n^{3/2}$, we have $\maxin \sum_{j=1}^n \mathbf{A}_n(i,j)^2 = o_P(1/\sqrt{n}).$ %
\end{lemma}

\begin{proof}[Proof of \cref{lem:hopfield condition checking}]
    The simple expressions of the variance terms in~\cref{cor:i.i.d. clt} follow by noting that our assumptions imply $\psi(c) = \log\cosh(c)$, and the fact that we take $\lambda=0$.

    \vspace{0.05in}

    \noindent %
    By \cref{lem:alpha_n},
    we have $\sqrt{n} \an = o_P(1).$
    Also, by \cref{lem:L_n norm} with $r=4$, the MHT condition \eqref{eq:mht} holds for all $\theta$. Finally, by \cref{lem:L_n norm} with $r = 2$,
    we have 
     $\lVert\A_n \rVert = O_P\left( \sqrt{\frac{n}{N}} \right),$ and so
     $$\sqrt{n\alpha_n}\lVert \A_n \bq \rVert =o_P(n^{1/4})\lVert \A_n \bq \rVert \le o_P(n^{1/4})\lVert \A_n\rVert = o_P(1),$$
     where the last estimate uses the fact that $N\gg n^{3/2}$.
\end{proof}

\section{Proof of main lemmas}\label{sec:pfmainlem}

\begin{proof}[Proof of \cref{lem:sum L2 bound}]
\red{Without the loss of generality, assume that the function $g$ is bounded by $1$, i.e. $\|g\|_\infty \le 1$.}

\vspace{2mm}
	\emph{Proof of (a).} As in the proof of \cref{thm:clt}, we generate $\bs'$ as follows: let $I$ be a randomly sampled index from $\{1,2,\ldots ,n\}$. Given $I=i$, replace $\sigma_i$ with an independently generated $\sigma'_i$ drawn from the conditional distribution of $\sigma_i$ given $\sigma_j,\ j\neq i$. Recall the definition of $\bb^{(g)}$ from \cref{def:conex} and let $$F(\bs, \bs') := \sumin \gamma_i(g(\sigma_i) - g(\sigma_i')) = \gamma_I(g(\sigma_I) - g(\sigma_I')),$$
 
 $$f(\bs) := \EE [ F(\bs, \bs') | \bs] = \frac{1}{\red{n}} \sumin \gamma_i (g(\sigma_i) - b_i^{(g)}).$$ Define $\Delta(\bs):= \EE\left( |f(\bs) - f(\bs')| |F(\bs, \bs')| \big | \bs \right).$ 
	By \cite[Theorem 1.5(ii)]{chatterjee2006stein}, it suffices to show 
 \begin{equation}\label{eq:toshow1}
 \Delta(\bs) \lesssim \frac{\|\boldsymbol{\gamma}\|^2}{n^2}
 \end{equation}
 where the hidden constant depends only on $h$.
 \vspace{0.1in}
 
	Towards this direction, we first recall the notation in \cref{def:expfam} and define the following mapping:
 $$\theta \mapsto \int g(x)\exp(\theta x-\psi_i(\theta))\,d\mu_{i}(x)\ =:\ \mathcal{G}_i(\theta).$$
{By taking derivatives, we can see that
$$\mathcal{G}_i'(\theta) = \Cov_{\mu_{i,\theta}} \left(X, g(X) \right), ~ \mathcal{G}_i''(\theta) = \Cov_{\mu_{i,\theta}} \left((X- \psi_i'(\theta))^2, g(X) \right),$$
and $\mathcal{G}_i', \mathcal{G}_i''$ are uniformly bounded by a constant not depending on $i$.
}
 In the same vein as \eqref{eq:locfield}, we write 
 $$m_i':=\sumjn \A_n(i,j)\sigma'_j=m_i+\A_n(i,I)(\sigma'_I-\sigma_I).$$
 A direct computation also yields that $b_i^{(g)}=\EE[g(\sigma_i)|\sigma_j, j\ne i]=\mathcal{G}_i(m_i+c_i),$ and so 
 $$f(\bs')=\frac{1}{n}\sumin \gamma_i(g(\sigma_i')-\mathcal{G}_i( m_i'+c_i)).$$
 Combining the above observations with a standard Taylor series expansion, there exist reals $\{\xi_{i,j}\}_{1\le i,j\le n}$, such that:
	\begin{align}\label{eq:difftay}
		&\;\;\;\;\mathcal{G}_i( m_i'+c_i)-\mathcal{G}_i( m_i+c_i) \nonumber \\ &=  (m_i' - m_i) \mathcal{G}_i'( m_i + c_i) + \frac{(m_i - m_i')^2}{2} \mathcal{G}_i''( {\xi_{i,I}}+c_i)\nonumber \\
		&=  \A_n(i,I) (\sigma_I' - \sigma_I)\mathcal{G}_i'( m_i + c_i) + \frac{1}{2}\A_n(i,I)^2 (\sigma_I - \sigma_I')^2 \mathcal{G}_i''({\xi_{i,I}} + c_i).
	\end{align}
	As a result, we have
	\begin{align*}
		&|f(\bs) - f(\bs')|  \\
		=& \frac{1}{n} \left|\gamma_I(g(\sigma_I) - g(\sigma_I')) - \sumin \gamma_i (\mathcal{G}_i( m_i+c_i)-\mathcal{G}_i( m_i'+c_i)) \right| \\
		\le& \frac{1}{n} |\gamma_I|\cdot|g(\sigma_I) - g(\sigma_I')| + \frac{1}{n}\left|  \sumin \gamma_i \A_n(i,I) (\sigma_I-\sigma'_I)\mathcal{G}_i'( m_i + c_i)\right| \\ &\quad + \frac{1}{n}\left|\frac{1}{2} \sumin \gamma_i \A_n(i,I)^2 \mathcal{G}_i''( \xi_{i,I} + c_i) (\sigma_I- \sigma_I')^2 \right|.
	\end{align*}
	Recall that all the measures $\{\mu_i\}_{1\le i\le n}$ are supported on $[-1,1]$, and $g(\cdot)$ takes values in $[-1,1]$ as well. Define a matrix $\mathbf{C}_n$ such that $\mathbf{C}_n(i,j):= \A_n(i,j)\mathcal{G}_i'( m_i+c_i)$. Then, by our assumption that $\limsup_{n \to \infty} \lVert \A_n \rVert < \infty$, we have:
 \begin{align}\label{eq:reflat}
  \sumin \gamma_i \A_n(i,I)\mathcal{G}_i'( m_i+c_i)=(\C_n^{\top}\boldsymbol{\gamma})_{I},\quad \limsup\limits_{n\to\infty} \, \lVert \C_n^{\top}\C_n\rVert\le \limsup\limits_{n\to\infty} \,\lVert \A_n\rVert^2 <\infty.
 \end{align}
 To simplify notation in the following sequence of displays, let us use $\EE_{\bs}$ to denote the \textit{conditional} expectation given $\bs$. Therefore, 
	\begin{align*}
	\Delta(\bs) &= \EE_{\bs}\left( \left| \gamma_I (g(\sigma_I) - g(\sigma_I')) \right| \left| f(\bs) - f(\bs') \right| \right) \\ & \le \frac{1}{n}\EE_{\bs}[\gamma_I^2(g(\sigma_I)-g(\sigma'_I))^2] + \frac{1}{n}\EE_{\bs}\left[|\gamma_I|\cdot|(g(\sigma_I)-g(\sigma'_I))(\sigma_I-\sigma'_I)(\C_n^{\top}\bgamma)_I|\right]\\ & \quad + \frac{1}{n}\EE_{\bs}\left|\frac{1}{2}\sumin \gamma_i\gamma_I \A_n(i,I)^2(\sigma_I-\sigma'_I)^2 (g(\sigma_I)-g(\sigma_I'))\mathcal{G}_i''( \xi_{i,I}+c_i)\right|\\
		&\lesssim \frac{\lVert \boldsymbol{\gamma}\rVert^2}{n^2} + \frac{1}{n^2}\lVert \boldsymbol{\gamma}\rVert \lVert \C_n^{\top}\boldsymbol{\gamma}\rVert + \frac{1}{n^2} \sum_{i,j} |\gamma_i| |\gamma_j| \A_n(i,j)^2  \\ & \lesssim \frac{\lVert \boldsymbol{\gamma}\rVert^2}{n^2} + \frac{\alpha_n \lVert \boldsymbol{\gamma}\rVert^2}{n^2} \lesssim \frac{\lVert \boldsymbol{\gamma}\rVert^2}{n^2}.
	\end{align*}
	The first and second inequalities in the above sequence follow directly from \eqref{eq:reflat}. By defining $\mathbf{B}_n(i,j):=\A_n^2(i,j)$, the final inequality follows from \red{using \cref{lem:normorder} to write}
    \begin{align}\label{eq:b_n matrix}
        \lVert \mathbf{B}_n\rVert\le \red{\lVert\mathbf{B}_n\rVert_\infty} =  \max_{i=1}^n\sumjn \mathbf{B}_n(i,j)=\alpha_n
    \end{align}
    (recall the definition of $\an$ from \eqref{eq:rowcontrol}). This establishes \eqref{eq:toshow1} and  completes the proof of the exponential tail bound.
 
 \vspace{0.1in}
	
	The moment bounds follow from the exponential tail bound by noting that, for some constant $a>0$, the following holds:
	\begin{align*}
		\EE & \left|\sumin \gamma_i(g(\sigma_i) - \mathcal{G}_i( m_i+c_i)\right|^r = \int_0^{\infty} r t^{r-1} \PP\left(\left|\sumin \gamma_i(g(\sigma_i) - \mathcal{G}_i( m_i+c_i))\right| > t\right) d t \\
		& \leq 2r\int_0^{\infty} t^{r-1} e^{-\frac{a t^2}{\|\bgamma\|^2}} \, dt  = \frac{r\lVert \bgamma\rVert^r}{a^{\frac{r}{2}}} \int_0^{\infty} t^{\frac{r-2}{2}} e^{-t}\,dt \lesssim  \frac{\|\bgamma\|^r}{(2a e)^{\frac{r}{2}}} r^{\frac{r+1}{2}}.
	\end{align*}
        The final inequality uses standard upper bounds on the Gamma function, see e.g.,~\cite[Theorem 1.1]{nemes2010new}. We note here that the hidden constants do not depend on $r$. This completes the proof.
 \vspace{0.05in}
 
	\emph{Proof of (b).}
	We proceed in a similar fashion as in part (a). Define 
\begin{align*}
    m_j^i := \sum_{k \neq i} \A_n(j,k) \sigma_k.
\end{align*}
Set $b^{(g)}_j=\mathcal{G}_j( m_j+c_j)$ (from \cref{def:conex}) and set  $b^{(g),i}_j:=\mathcal{G}_j(m_j^i+c_j)$ for $j\neq i$. 
	\begin{align*}
		&\;\;\;\;\left|\sumin \gamma_i \big(g(\sigma_i) - b^{(g)}_i\big) b^{(g)}_i\right|^2 
		\\ &= \sumin \gamma_i^2 \big(g(\sigma_i) - b^{(g)}_i\big)^2 \big(b^{(g)}_i\big)^2 + \sum_{i \neq j} \gamma_i \gamma_j \big(g(\sigma_i) - b^{(g)}_i\big) b^{(g)}_i \big(g(\sigma_j) - b^{(g)}_j\big) b^{(g)}_j \\
		&= \sumin \gamma_i^2 \big(g(\sigma_i) - b^{(g)}_i\big)^2 \big(b^{(g)}_i\big)^2  + \sum_{i \neq j} \gamma_i \gamma_j \big(g(\sigma_i) - b^{(g)}_i\big) b^{(g)}_i \Big[ \big(g(\sigma_j) - b^{(g),i}_j\big) b^{(g),i}_j \\ & \quad \quad + \big(g(\sigma_j) - 2 b^{(g)}_j\big) \big(b^{(g)}_j - b^{(g),i}_j\big) - \big(b^{(g)}_j - b^{(g),i}_j\big)^2 \Big].
	\end{align*}
	Since $\EE\big(g(\sigma_i) - b^{(g)}_i \mid \sigma_j,\ j\neq i\big) = 0$, and $b^{(g)}_i, b^{(g),i}_j$ are both measurable with respect to $(\sigma_k,\ k\neq i)$, the term $$\EE\left[\big(g(\sigma_i) - b^{(g)}_i\big) b^{(g)}_i \big(g(\sigma_j) - b^{(g),i}_j\big) b^{(g),i}_j\right]=0,\quad\quad \mbox{for all}\ j\neq i.$$ 
Once again, we set $\mathbf{B}_n(i,j) := \A_n^2(i,j)$. Using the above displays, we then get:
		\begin{align}
			&\;\;\;\;\EE \left|\sumin \gamma_i (g(\sigma_i) - b^{(g)}_i) b^{(g)}_i\right|^2 \notag \\ &\lesssim \sumin \gamma_i^2 + \EE\bigg|\sum_{i \neq j} \gamma_i \gamma_j (g(\sigma_i) - b^{(g)}_i) b^{(g)}_i \Big[\big(g(\sigma_j) - 2 b^{(g)}_j\big) \big(b^{(g)}_j - b^{(g),i}_j\big)|\Big]\bigg| \label{eq:moment RHS} \\
            &\qquad + \EE\bigg|\sum_{i \neq j} \gamma_i \gamma_j (g(\sigma_i) - b^{(g)}_i) b^{(g)}_i \big(b^{(g)}_j - b^{(g),i}_j\big)^2 \bigg|. \notag
		\end{align}
 By a similar Taylor expansion as in \eqref{eq:difftay}, we note that for some $\xi_{i,j}\in\R$:
\begin{align}\label{eq:two bounds for b difference}
    b^{(g),i}_j-b^{(g)}_j = -\A_n(i,j)\sigma_i\mathcal{G}_j'(m_j+c_j) \red{+} \frac{1}{2}\A_n(i,j)^2\sigma_i^2\mathcal{G}_j''(\xi_{i,j}+c_j), \quad \red{|b^{(g),i}_j-b^{(g)}_j| \lesssim |\A_n(i,j)|.}
\end{align}
\red{By using each bound in \eqref{eq:two bounds for b difference} for the second and third summand in \eqref{eq:moment RHS}}, respectively, we have
\begin{align*}
     &\;\;\;\;\EE \left|\sumin \gamma_i (g(\sigma_i) - b^{(g)}_i) b^{(g)}_i\right|^2 \\ &\lesssim \sumin \gamma_i^2 +  \EE \bigg|\sum_{i,j} \left[\gamma_i \big(g(\sigma_i) - b^{(g)}_i\big) b^{(g)}_i \sigma_i \right] \A_n(i,j) \left[\gamma_j \big(g(\sigma_j) - 2 b^{(g)}_j\big) \mathcal{G}_j'( m_j + c_j) \right]\bigg| \\
     &\quad \red{+ \frac{1}{2} \EE\bigg|\sum_{i \neq j} \gamma_i \gamma_j (g(\sigma_i) - b^{(g)}_i) b^{(g)}_i \big(g(\sigma_j) - 2 b^{(g)}_j\big) \A_n(i,j)^2\sigma_i^2\mathcal{G}_j''(\xi_{i,j}+c_j) \bigg|} \\
     &\quad \red{+ \EE \bigg[\sum_{\i \neq j} |\gamma_i \gamma_j (g(\sigma_i) - b^{(g)}_i) b^{(g)}_i|  \mathbf{A}_n(i,j)^2 \bigg]} \\
	&\lesssim \sumin \gamma_i^2 +  \EE \bigg|\sum_{i,j} \left[\gamma_i \big(g(\sigma_i) - b^{(g)}_i\big) b^{(g)}_i \sigma_i \right] \A_n(i,j) \left[\gamma_j \big(g(\sigma_j) - 2 b^{(g)}_j\big) \mathcal{G}_j'( m_j + c_j) \right]\bigg| \\
    &\quad + \sum_{i,j} |\gamma_i| \mathbf{B}_n(i,j) |\gamma_j| \, \lesssim \, \sumin \gamma_i^2.
 \end{align*}
 In the last inequality, we bounded (a) the second term by viewing it as a quadratic form, the (b) third term by also viewing it as a quadratic form alongside the operator norm bound of $\mathbf{B}_n$ in \eqref{eq:b_n matrix}.
\end{proof}

\noindent Next, we move on to the proof of \cref{lem:m-n contraction}.  We first need the following matrix lemma whose proof we defer.

		\begin{lemma}\label{lem:matrix norm} 
  Let $\|\cdot\|_q$ be the $q \to q$ operator norm, with $q \in [2, \infty]$. Also, let $\mathbf{C}_n$ be a $n \times n$ matrix. Suppose $\|\mathbf{C}_n\|_q \le \theta < 1$. Then, $(\I_n - \mathbf{C}_n)^{-1}$ exists and satisfies $\|(\I_n-\mathbf{C}_n)^{-1}\|_q \le \big(1 - \theta\big)^{-1}$. 
           \end{lemma}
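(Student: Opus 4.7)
The plan is to prove this via the standard Neumann series argument, which works uniformly in $q \in [2,\infty]$ since the $q \to q$ operator norm is submultiplicative on the algebra of $p \times p$ matrices. Specifically, since $\|\mathbf{AB}\|_q \le \|\mathbf{A}\|_q \|\mathbf{B}\|_q$ follows directly from the definition of the induced operator norm, iteration gives $\|\mathbf{C}_n^k\|_q \le \|\mathbf{C}_n\|_q^k \le \theta^k$ for every integer $k\ge 0$.

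Based on this, I would define the partial sums $\mathbf{S}_N := \sum_{k=0}^N \mathbf{C}_n^k$ and show they form a Cauchy sequence in the finite-dimensional Banach space $(\R^{p\times p}, \|\cdot\|_q)$: for $M<N$,
\begin{equation*}
\|\mathbf{S}_N - \mathbf{S}_M\|_q \le \sum_{k=M+1}^N \|\mathbf{C}_n^k\|_q \le \sum_{k=M+1}^N \theta^k \le \frac{\theta^{M+1}}{1-\theta},
\end{equation*}
which tends to $0$ as $M\to\infty$. Hence $\mathbf{S}_N$ converges to some matrix $\mathbf{S}$ with $\|\mathbf{S}\|_q \le \sum_{k=0}^\infty \theta^k = (1-\theta)^{-1}$.

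To identify $\mathbf{S}$ with $(\I_n - \mathbf{C}_n)^{-1}$, I would use the telescoping identity $(\I_n - \mathbf{C}_n)\mathbf{S}_N = \I_n - \mathbf{C}_n^{N+1}$ and pass to the limit: since $\|\mathbf{C}_n^{N+1}\|_q \le \theta^{N+1} \to 0$, we obtain $(\I_n - \mathbf{C}_n)\mathbf{S} = \I_n$, and similarly $\mathbf{S}(\I_n - \mathbf{C}_n) = \I_n$. This shows $\I_n - \mathbf{C}_n$ is invertible with inverse $\mathbf{S}$, and the norm bound $\|(\I_n - \mathbf{C}_n)^{-1}\|_q \le (1-\theta)^{-1}$ follows from the bound on $\|\mathbf{S}\|_q$ above.

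There is no real obstacle here; the only point that deserves a one-line verification is submultiplicativity of $\|\cdot\|_q$, which is immediate from the definition of the induced operator norm and holds for every $q \in [1,\infty]$ (in particular for $q \in [2,\infty]$ as required). The proof is thus essentially a one-paragraph Neumann series computation.
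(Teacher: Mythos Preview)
Your Neumann-series argument is correct and complete. The paper takes a slightly different, non-constructive route: it first shows invertibility by contradiction (if $(\I_n-\mathbf{C}_n)\bv=\mathbf{0}$ for some nonzero $\bv$, then $\mathbf{C}_n\bv=\bv$ forces $\|\mathbf{C}_n\|_q\ge 1$), and then obtains the norm bound directly via the reverse triangle inequality: for $\by$ with $\|\by\|_q=1$ and $\bv=(\I_n-\mathbf{C}_n)^{-1}\by$, one has $1=\|(\I_n-\mathbf{C}_n)\bv\|_q\ge \|\bv\|_q-\|\mathbf{C}_n\bv\|_q\ge(1-\theta)\|\bv\|_q$. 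Your approach is the standard Banach-algebra argument and has the mild advantage of being constructive (it exhibits the inverse as $\sum_{k\ge 0}\mathbf{C}_n^k$), while the paper's version is marginally shorter and sidesteps any limit or completeness considerations. Either proof is perfectly adequate here.
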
 
\begin{proof}[Proof of Lemma \ref{lem:m-n contraction}]
		
Suppose $\lVert \A_n \rVert_q < 1$, for some $q \in [2,\infty]$.
		Let $$\mathfrak{f}_i := \sumjn \A_n(i,j) (\sigma_j - b_j) = m_i - \sumjn \A_n(i,j) \psi_j'( m_j + c_j).$$ Lemma \ref{lem:sum L2 bound}(a) with $g(x):=x, \gamma_j = \A_n(i,j)$ shows that for all $i$, and $q\in [2,\infty)$, we have 
  \begin{equation}\label{eq:tailbdcall}
  \EE |\mathfrak{f}_i|^q\lesssim \left(\frac{\alpha_n}{2ae}\right)^{\frac{q}{2}} q^{\frac{q+1}{2}},
  \end{equation}
  where the hidden constant is free of $q$. By a Taylor expansion, we can write
		\begin{align*}
			m_i - s_i &= \sumjn \A_n(i,j) (\psi_j'( m_j + c_j) - \psi_j'( s_j + c_j)) + \mathfrak{f}_i \\
            &=  \sumjn \A_n(i,j) (m_j - s_j) \psi_j''( \xi_j + c_j) + \mathfrak{f}_i.
		\end{align*}
		Hence, $\mathbf{m} - \mathbf{s} = \C_n(\mathbf{m}-\mathbf{s}) + \boldsymbol{\mathfrak{f}}$, where $\mathbf{C}_n(i,j) :=  \A_n(i,j) \psi_j''( \xi_j + c_j)$ and $\boldsymbol{\mathfrak{f}}:=(\mathfrak{f}_1,\ldots ,\mathfrak{f}_n)^{\red{\top}}$. Note  $$\|\C_n\|_{q} \le \lVert \A_n\rVert_q < 1.$$ Then, \cref{lem:matrix norm} implies $\|(\I_n-\C_n)^{-1}\|_{q} < \infty$. Thus,
\begin{equation}\label{eq:contractioncall}
  \|\mathbf{m}-\mathbf{s}\|_q \le \|(\I_n-\C_n)^{-1}\|_{q} \|\boldsymbol{\mathfrak{f}}\|_q.
\end{equation}  
  Note that this bound is deterministic.
  
  \vspace{0.05in} 

 \noindent \emph{Proofs of (a) and (b).} Part (a) now follows by taking $q = 2$ in \eqref{eq:tailbdcall} and \eqref{eq:contractioncall}. Similarly, part (b) follows by taking $q = 4$ in \eqref{eq:tailbdcall} and \eqref{eq:contractioncall}.
   
   \vspace{0.05in}
   
\noindent \emph{Proof of (c).} 
  Under the SHT condition \eqref{eq:sht}, we have $\|\C_n\|_\infty \le \|\A_n\|_\infty \le \rho$.
  By using \cref{lem:normorder} and \cref{lem:matrix norm}, for any given $r\in [2,\infty)$, we have 
  $$\red{\|(\I_n-\C_n)^{-1}\|_r \le \|(\I_n-\C_n)^{-1}\|_{\infty}\le \frac{1}{1-\rho}}.$$
  Then, taking $q = r$ in \eqref{eq:contractioncall} gives
		$$\sumin |m_i-s_i|^r \le \frac{1}{(1-\rho)^r} \sumin |\mathfrak{f}_i|^r.$$
	The conclusion now follows by invoking \eqref{eq:tailbdcall} with $q=r$.
\end{proof}

\begin{proof}[Proof of \cref{lem:optimizer u l2 concentration}]
   By the definition of $\mathbf{s}$ in \cref{lem:uniqueness of optimizers}, we have
    \begin{align*}
    		s_i = \sumjn \A_n(i,j) \psi_j'( s_j + c_j) 
    		= {\sumjn \A_n(i,j) \psi_j'(c_j)} +  \sumjn \A_n(i,j) s_j \psi_j''( \xi_j + c_j)
    \end{align*}
    for some constants $\xi_j$. Let $\mathbf{D}_n$ be a $n \times n$ matrix with $$\mathbf{D}_n(i,j) :=  \A_n(i,j) \psi_j''( \xi_j + c_j).$$
    Then, we have $(\I_n-\mathbf{D}_n) \mathbf{s} = \boldsymbol{\nu}.$

\vspace{0.05in}

    \noindent \emph{Proof of (a).} Under the WHT condition \eqref{eq:wht} we have $\lVert \mathbf{D}_n \rVert \le \lVert \A_n \rVert \le \rho<1$. Hence, by applying \cref{lem:matrix norm}, we have
    $$\lVert  \mathbf{s} \rVert \lesssim \lVert \boldsymbol{\nu} \rVert.$$

\vspace{0.05in}

    \noindent \emph{Proof of (b), (c).} Similarly, under the MHT \eqref{eq:mht} (or SHT \eqref{eq:sht})  condition we have $\lVert \mathbf{D}_n \rVert_r \le \lVert \A_n\rVert_r \le \rho < 1$ with $r = 4$ (or $r=\infty$), and we have
    $$\lVert  \mathbf{s} \rVert_r \lesssim \lVert \boldsymbol{\nu} \rVert_r,$$	
    by applying \cref{lem:matrix norm}.  This completes the proof.
\end{proof}

\section{Appendix: Proof of technical lemmas}\label{sec:pfauxlem}

Here, we present the proofs of the auxiliary lemmas that were used in previous proofs, which include high-probability bounds for random matrices (Lemmas \ref{lem:normorder}, \ref{lem:ksbound}, \ref{lem:L_n norm}, \ref{lem:alpha_n}, and \ref{lem:matrix norm}).

\begin{proof}[Proof of \cref{lem:normorder}]\label{pf:normorder}
    	To show $\lVert\M_n\rVert\le \lVert\M_n\rVert_{r}$, let $\lambda(\M_n)$ be the $(2,2)$-operator norm of $\M_n$ (i.e. maximum absolute eigenvalue) with an eigenvector $\mathbf{v}$, i.e., $\M_n \mathbf{v} = \lambda(\M_n) \mathbf{v}$. Then,
	$$\lVert\M_n\rVert_r^r = \sup_{\mathbf{x} \neq 0} \frac{\sumin |(\M_n \mathbf{x})_i|^r}{\sumin |x_i|^r}\ge \frac{\sumin |(\M_n \mathbf{v})_i|^r}{\sumin |v_i|^r} = \lambda(\M_n)^r,$$
	so $\lVert \M_n\rVert_r \ge \lVert \M_n\rVert$.

 \vspace{0.05in}
 
	To show $\lVert \M_n\rVert_r\le \lVert \M_n\rVert_{\infty}$, 
	without the loss of generality, suppose that $\lVert \M_n\rVert_{\infty} = 1$. Then,
	\begin{align*}
		\lVert \M_n\rVert_r^r &= \max_{\sumin |x_i|^r = 1} \sumjn \left|\sumin \M_n(i,j) x_i\right|^r  \le \max_{\sumin |x_i|^r = 1} \sumjn \left(\sumin |\M_n(i,j)| |x_i|\right)^r
	\end{align*}
	and it suffices to show that 
	$$\sumjn \left(\sumin |\M_n(i,j)| |x_i|\right)^r \le \sumin |x_i|^r.$$
	By Jensen's inequality, we then have
	$$\left(\sumin |\M_n(i,j)| |x_i|\right)^r \le \sumin |\M_n(i,j)| |x_i|^r$$
	for all $j$. Hence, 
 \begin{align*}
 \sumjn \left(\sumin |\M_n(i,j)| |x_i|\right)^r \le \sumjn \sumin |\M_n(i,j)| |x_i|^r \le \sumin |x_i|^r.
 \end{align*}
 This completes the proof.
\end{proof}
\begin{proof}[Proof of \cref{lem:ksbound}]
For simplicity, set $W \equiv \mcn(0,\tau)$. Then, we have
\begin{align*}
    d_{KS}(aX+b,W) & \le d_{KS}(aX+b, aW+b) + d_{KS}(aW+b, W) \\
    &= d_{KS}(X, W) + d_{KS}(aW+b, W) \\
    &\le d_{KS}(X,W) + C_K\Big( |a^2-1|+ |b|\Big).
\end{align*}
Here, the second line follows from the definition of KS distance, and the third from the Lipschitz property of the KS distance between two Normals (e.g. Theorem 1.3 in \cite{devroye2018total}), and $C_K$ is some constant depending only on $K$.
\end{proof}

\begin{proof}[Proof of \cref{lem:L_n norm}]
    \red{For notational simplicity, assume that the inverse-temperature parameter is fixed at $\theta = 1$.}
    Fix $r \in [2, \infty]$. %
    Recall that the $r \to r$ operator norm can be written as
    \begin{equation}\label{eq:dual formula norm}
        \|\A_n\|_{r} = \sup_{\|\mathbf{v}\|_r=1, ~ \|\mathbf{w}\|_{r^\star}=1} \bv^\top \A_n \bw,
    \end{equation}
    where $r^\star \in [1,2]$ is the H\"older conjugate of $r$ that satisfies $\frac{1}{r} + \frac{1}{r^\star} = 1$ (e.g. see pg 2 of \cite{latala2024operator}). Let $S_r^n := \{\bv \in \mathbb{R}^n: \|\bv\|_r = 1\}$ be the $\ell^r$ sphere in $\mathbb{R}^n$. Also, for any $\varepsilon>0$, let $\mathcal{N}(S_r^n, \varepsilon) \red{\subset S_r^n}$ be an \red{$\varepsilon$-net} of $S_r^n$, with distance $d(\bv,\bv') = \|\mathbf{v} - \mathbf{v}'\|_r$. Then, repeating the standard volumetric argument (e.g. see Proposition 4.2.12 in \citep{vershynin2018high}) gives the upper bound
    \begin{align}\label{eq:net size}
        |\mathcal{N}(S_r^n, \varepsilon)| \le \Big( \frac{2}{\varepsilon}+1\Big)^n.
    \end{align}
    Indeed, as in the proof there, it suffices to upper bound the $\varepsilon$-packing number of $B_r^n(\mathbf{0}, 1):=\{{\bf v}\in \R^n:\|{\bf v}\|_r\le 1\}$, the closed unit $\ell^r$-ball in $\mathbb{R}^n$. Let $\mathcal{P}$ denote this value. Then, we can construct $\mathcal{P}$ closed disjoint balls $B_r^n(\mathbf{v}_i, \varepsilon/2)$ with centers $\mathbf{v}_i \in B_r^n(\mathbf{0}, 1)$ and radius $\varepsilon/2$. As these balls fit in $B_r^n(\mathbf{0}, 1+\varepsilon/2)$, comparing the volumes give
    $$\mathcal{P} \text{vol}\Big(B_r^n(\mathbf{v}, \varepsilon/2)\Big) \le \text{vol}\Big(B_r^n(\mathbf{0}, 1+\varepsilon/2)\Big),$$
    and \eqref{eq:net size} holds. Here, $\text{vol}(B)$ denotes the volume of a set $B\subset \mathbb{R}^n$.
    In the remainder of the proof, let us fix $\varepsilon = \frac{1}{4}$, which gives $|\mathcal{N}(S_r^n, \varepsilon)| \le 9^n$.
\\

    \textbf{Step 1: Netting.} We first claim that
    $$\|\A_n\|_{r} \le \frac{1}{1 - 2\varepsilon} \sup_{\substack{\bv \in \mathcal{N}(S_r^n, \varepsilon), \\ \bw \in \mathcal{N}(S_{r^\star}^n, \varepsilon)}} \bv^\top \A_n \bw = 2\sup_{\substack{\bv \in \mathcal{N}(S_r^n, \varepsilon), \\ \bw \in \mathcal{N}(S_{r^\star}^n, \varepsilon)}} \bv^\top \A_n \bw.$$
    Let $\bv_0 \in S_r^n, ~\bw_0 \in S_{r^\star}^n$ be vectors that attains the \red{supremum in \eqref{eq:dual formula norm}} so that $\|\A_n\|_{r} = \bv_0^\top \A_n \bw_0$. Take $\bv \in \mathcal{N}(S_r^n, \varepsilon), ~ \bw \in \mathcal{N}(S_{r^\star}^n, \varepsilon)$ such that $\|\bv - \bv_0\|_r < \varepsilon, ~ \|\bw - \bw_0\|_{r^\star} < \varepsilon$. Then, we can write
    \begin{align*}
        \|\A_n\|_{r} = \bv_0^\top \A_n \bw_0 &= \bv^\top \A_n \bw + (\bv_0-\bv)^\top \A_n \bw + \bv_0^\top \A_n (\bw_0 - \bw) \\
        &\le \bv^\top \A_n \bw + 2 \varepsilon \| \A_n\|_{r}.
    \end{align*}
    For the last line, we are using H\"older's inequality to bound
    \begin{align*}
        (\bv_0-\bv)^\top \A_n \bw &\le \|\A_n (\bv_0 - \bv)\|_r \|\bw\|_{r^\star} \le \varepsilon \|\A_n\|_{r}, \\
        \bv_0^\top \A_n (\bw_0- \bw) &\le \|\A_n \bv_0\|_r \|\bw_0 - \bw\|_{r^\star} \le \varepsilon \|\A_n\|_{r}.
    \end{align*}
    Then, we have the desired result by rearranging the terms and taking the supremum over $\bv \in \mathcal{N}(S_r^n, \varepsilon), \bw \in \mathcal{N}(S_{r^\star}^n, \varepsilon)$.
  \\
  
    \textbf{Step 2: Concentration.}
    We fix $\bv \in \mathcal{N}(S_r^n, \varepsilon), \bw \in \mathcal{N}(S_{r^\star}^n, \varepsilon)$, and compute the tail bound for $\PP( \bv^\top \A_n \bw > \delta)$. For notational simplicity, let us define a $n \times n$ matrix $\mathbf{B}_n$ (depending on $\G_n$) by setting $\mathbf{B}_n(i,j) = {\G_n(i,j)} v_i w_j I(i \neq j)$. In the remainder of the proof, all probabilities will be conditioned on the graph $\G_n$.
    For each $1\le k\le N$, let $$T_k = T_k^{\bv, \bw} := \sum_{i \neq j} Z_{k,i} Z_{k,j} {\G_n(i,j)} v_i w_j = \mathbf{Z}_k^\top \mathbf{B}_n \mathbf{Z}_k$$ be a quadratic form of $\mathbf{Z}_k = (Z_{k,1}, \ldots, Z_{k,n})^\top$. Then, we can write
    $$\bv^\top \A_n \bw = \frac{1}{N}\sum_{1 \le i \neq j \le n} \sum_{k=1}^N Z_{k,i} Z_{k,j} \G_n(i,j) v_i w_j = \frac{1}{N} \sum_{k=1}^N T_k.$$
    
    As $\{\mathbf{Z}_k\}_{1\le k\le N}$ are \red{i.i.d.}, so are $\{T_k\}_{1\le k\le N}$. %
    By a moment generating function based version of the Hanson-Wright inequality (e.g. see the last equation in Step 4 in the proof of Theorem 1.1 in \cite{rudelson2013Hanson-Wright}, where we take $A = \mathbf{B}_n$ and $S = \sum_{i\neq j} \mathbf{B}_n(i,j) Z_{k,i} Z_{k,j}$), we can write
    $$\EE \exp \Big[\theta T_k \Big] \le \exp\Big[K_0 \theta^2 \|\mathbf{B}_n\|_F^2\Big], \quad \text{for all} \quad 0< \theta \le \frac{K_1}{\|\mathbf{B}_n\|}.$$
    Here, $K_0, K_1 > 0$ are finite constants depending only on the sub-Gaussian norm.
    As $T_k$ are i.i.d. for each $k = 1, \ldots, N$, we have
    \begin{align}
        \PP\Big(\sum_{k=1}^N T_k > \delta' \Big) &\le e^{- \theta \delta'} \EE \exp \Big[\theta \sum_{k=1}^N T_k \Big] \notag \\
        &\le \exp \Big[-\theta \delta' + K_0 N\theta^2 \|\mathbf{B}_n\|_F^2 \Big], \quad \text{for} \quad 0 < \theta \le \frac{K_1}{\|\mathbf{B}_n\|}, ~ \delta' > 0. \label{eq:exponential tail bound}
    \end{align}
    Note that H\"older's inequality gives
    $$\|\bv\|^2 = \sumin v_i^2 \le \Big(\sumin v_i^r\Big)^{2/r} n^{1-2/r} = n^{1-2/r},$$
    and $|w_i|\le 1$ gives $\|\bw\|^2 = \sumin w_i^2 \le \sumin w_i^{r^\star} = 1.$\footnote{When $r = \infty$, we define $n^{1/r} = 1$. It is easy to check that the arguments in this proof does not change.}
    Combining these bounds and using $\G_n(i,j) \le 1$ give
    $$\|\mathbf{B}_n\|^2 \le \|\mathbf{B}_n\|_F^2 \le \Big(\sumin v_i^2\Big) \Big(\sumjn w_j^2\Big) \le n^{1-2/r}.$$ 
    Since we assume $N \ge n$, we can take $\theta =\frac{n^{1/r}}{\sqrt{N}} \le \frac{K_1}{\|\mathbf{B}_n\|}$, by choosing $K_1 > 1$ if necessary.
    By taking $\delta' = \Delta \sqrt{N} n^{1 - 1/r}$ for $\Delta>0$ large enough (to be chosen later), and simplifying \eqref{eq:exponential tail bound}, we get
    $$\PP\Big(\frac{1}{N}\sum_{k=1}^N T_k > \Delta \frac{n^{1-1/r}}{\sqrt{N}} \Big) \le \exp\Big[-\Delta n + K_0 n\Big].$$
\\

    \textbf{Step 3: Union bound.}
    Finally, we complete the proof by unfixing $\bv, \bw$ via a union bound:
    \begin{align*}
        \PP\Big(\|\A_n\|_{r} > \red{2} \Delta \frac{n^{1-1/r}}{\sqrt{N}}\Big) &\le \sum_{\bv \in \mathcal{N}(S_r^n, \varepsilon), ~ \bw \in \mathcal{N}(S_{r^\star}^n, \varepsilon)} \PP \left(\bv^\top \A_n \bw > \Delta\frac{n^{1-1/r}}{\sqrt{N}} \right) \\
        &\le 9^{2n}\exp(-\Delta n + K_0 n).
    \end{align*}
    Here, each inequality uses Step 1, and Step 2 combined with $\max\{|\mathcal{N}(S_r^n, \varepsilon)|, |\mathcal{N}(S_{r^\star}^n, \varepsilon)|\} \le 9^n$, respectively.
    The RHS above goes to zero when $\Delta$ is large enough, so \eqref{eq:l_n norm bound} holds.

    The claim for $r \in [1,2)$ directly follows by noting that \eqref{eq:dual formula norm} implies $\|\A_n\|_r = \|\A_n\|_{r^\star} = O_P \Big(\frac{n^{1/r}}{\sqrt{N}} \Big)$ for symmetric matrices.
\end{proof}

\begin{proof}[Proof of \cref{lem:alpha_n}]
{For notational simplicity, assume that the inverse-temperature parameter is fixed at $\theta = 1$.}
Without loss of generality, suppose $\{Z_{k,i}\}_{1\le k\le N, 1\le i\le n}$ have sub-Gaussian norms bounded by $K_0$.
Write $W_i := \frac{\sum_{k=1}^N Z_{k,i}^2}{N}$. As $\{Z_{k,i}^2\}_{1\le k\le N, 1\le i\le n}$ is a collection of sub-exponential random variables with mean 1 and sub-exponential norm $K_0^2$, Bernstein's inequality (e.g. \red{Corollary} 2.8.3 in \cite{vershynin2018high}) gives
$\PP(W_i > 2) \le e^{-K_1 N},$ for some constant $K_1 > 0$ that only depends on $K_0$. We refer the reader to Section 2 in \cite{vershynin2018high} for details regarding sub-Gaussian and sub-exponential random variables.

For a fixed $1\le i\le n$, conditioning on $(Z_{1,i},\ldots, Z_{N,i})$ (and possibly $\G_n$), we have $\A_n(i,j) = \frac{\G_n(i,j)}{N} \sum_{k=1}^N Z_{k,i} Z_{k,j}$ are independent and identically distributed for all $j \neq i$. In particular, each $\A_n(i,j) \mid (Z_{1,i},\ldots, Z_{N,i})$ has a sub-Gaussian distribution with mean $0$ and sub-Gaussian norm $\G_n(i,j) K_0 \sqrt{\frac{W_i}{N}}$. %
Hence, for any $\Theta > 0$, we have
\begin{align*}
    \PP\Big(\sqrt{n} \sum_{j \neq i} \A_n(i,j)^2 > \delta \mid &Z_{1,i},\ldots, Z_{N,i}\Big) \le e^{-\Theta \delta/\sqrt{n}} \prod_{j \neq i} \Big(\EE \Big[e^{\Theta \A_n(i,j)^2} \mid Z_{1,i},\ldots, Z_{N,i} \Big]\Big) \\
    &\le \exp \Big[ - \frac{\Theta \delta}{\sqrt{n}} + \frac{K_2^2 \Theta W_i}{N} \sum_{j \neq i} \G_n(i,j) \Big], \quad \forall 0 \le \Theta \le \frac{1}{K_2^2}\frac{N}{W_i},
\end{align*}
for some constant $K_2$ depending on $K_0$.
Here, the first inequality follows from Markov's inequality, and the second uses the sub-Gaussianity (see \cite[Proposition 2.5.2(iii)]{vershynin2018high}).
In particular, on the event $W_i \le 2$, we can take $\Theta = \frac{N}{2 K_2^2}$ and bound $\sum_{j \neq i} \G_n(i,j) \le n$ to get
$$\PP\Big(\sqrt{n} \sum_{j \neq i} \A_n(i,j)^2 > \delta  \mid Z_{1,i},\ldots, Z_{N,i}\Big) \le \exp \Big[ - \frac{N \delta}{2K_2^2\sqrt{n}} + n  \Big].$$
Finally, by a union bound and using the above upper bounds, we get
\begin{align}\label{eq:alpha_n bound i.i.d}
    \PP \Big(\maxin \sqrt{n} &\sum_{j \neq i} \A_n(i,j)^2 > \delta \Big) \le \sumin \Big[\PP \Big(\sqrt{n} \sum_{j \neq i} \A_n(i,j)^2 > \delta , ~W_i \le 2 \Big) + \PP(W_i > 2) \Big] \notag \\
    &\le \sumin \EE\Bigg[ \PP \Big(\sqrt{n} \sum_{j \neq i} \A_n(i,j)^2 > \delta \mid Z_{1,i} ,\ldots, Z_{N,i} \Big) I(W_i \le 2)\Bigg] + n e^{-K_1 N} \notag \\
    &\le n \exp \Big[ - \frac{N\delta}{2K_2^2\sqrt{n}} +n  \Big] + n e^{-K_1 N}.
\end{align}
For any constant $\delta>0$, the RHS of \eqref{eq:alpha_n bound i.i.d} is $o(1)$ as long as $n \ll N^{2/3}$. This completes the proof.
\end{proof}

\begin{proof}[Proof of Lemma \ref{lem:matrix norm}]
	
        We first show the existence of $(\I_n -\mathbf{C}_n)^{-1}$. Suppose not, i.e. there exists a vector $\bv \neq \mathbf{0}$, $\lVert \bv\rVert = 1$ such that $(\I_n -\mathbf{C}_n) \bv = \red{\mathbf{0}}$. Then, $\mathbf{C}_n \bv = \bv$, and we have a contradiction since $1 \le \lVert \mathbf{C}_n \rVert_2 \le \lVert \mathbf{C}_n \rVert_q < 1$. The second inequality follows from \red{\cref{lem:normorder}} and that $q \ge 2$.

        Now, we prove that $\lVert (\I_n -\mathbf{C}_n)^{-1} \rVert_q \le (1-{\theta})^{-1}$. For any vector $\by$ with $\lVert \by \rVert_q=1$, let $\bv := (\I_n -\mathbf{C}_n)^{-1} \by$. Then, 
		$$\lVert \by \rVert_q = \lVert (\I_n -\mathbf{C}_n) \bv \rVert_q \ge \lVert \bv \rVert_q - \lVert \mathbf{C}_n \bv \rVert_q \ge (1 - \theta) \lVert \bv \rVert_q,$$
		and we have $\lVert \bv \rVert_q \le \frac{\lVert \by \rVert_q}{1-\theta}$. Hence, $\lVert (\I_n - \mathbf{C}_n)^{-1} \rVert_q = \sup_{\lVert \by \rVert_q =1} \lVert(\I_n -\mathbf{C}_n)^{-1} \by \rVert_q \le (1-\theta)^{-1}$.
\end{proof}

\begin{acks}[Acknowledgments]
The authors would like to thank Matthias Löwe, Debdeep Pati, Roman Vershynin, and Kaizheng Wang for helpful discussions. 
\end{acks}

\bibliographystyle{imsart-number.bst}
\bibliography{reference}

\end{document}